\numberwithin{equation}{section}
\newtheorem{theorem}{Theorem}[section]
\newtheorem{lemma}[theorem]{Lemma}
\newtheorem{proposition}[theorem]{Proposition}
\theoremstyle{definition}
\newtheorem{definition}[theorem]{Definition}
\theoremstyle{remark}
\newtheorem{remark}[theorem]{Remark}
\newtheorem{notation}[theorem]{Notation}
\newcommand{\prin}{\mathrm{prin}}
\newcommand{\sub}{\mathrm{sub}}
\renewcommand{\tilde}{\widetilde}
\begin{document}

\title{Diagonalization of elliptic systems via pseudodifferential projections}
\author{Matteo Capoferri%\textsuperscript{\orcidlink{0000-0001-6226-1407}}
\thanks{
MC:
School of Mathematics,
Cardiff University,
Senghennydd Rd,
Cardiff CF24~4AG,
UK;\newline
matteo.capoferri@gmail.com,
\url{http://mcapoferri.com}.
}
}

%%%% this removes footnote "crosses" by the authors' names
\renewcommand\footnotemark{}
%%%%

%\date{version 29; 11 January 2022; \LaTeX{ed} \today}
\date{12 January 2022}

\maketitle
\begin{abstract}
Consider an elliptic self-adjoint pseudodifferential operator $A$ acting on $m$-columns of half-densities on a closed manifold $M$,  whose principal symbol is assumed to have simple eigenvalues.  Relying on a basis of pseudodifferential projections commuting with $A$,  we construct an almost-unitary pseudodifferential operator that diagonalizes $A$ modulo an infinitely smoothing operator. We provide an invariant algorithm for the computation of its full symbol, as well as an explicit closed formula for its subprincipal symbol. 
Finally, we give a quantitative description of the relation between the spectrum of $A$ and the spectrum of its approximate diagonalization, and discuss the implications at the level of spectral asymptotics.

\

{\bf Keywords:} pseudodifferential projections, elliptic systems, unitary diagonalization,  invariant subspaces, spectral asymptotics.

\

{\bf 2020 MSC classes: }
primary 
58J40;
%Pseudodifferential and Fourier integral operators on manifolds
secondary
47A15,
%Invariant subspaces of linear operators
35J46,
%First-order elliptic systems
35J47,
%Second-order elliptic systems
35J48,
%Higher-order elliptic systems
35P20,
%Asymptotic distributions of eigenvalues in context of PDEs
58J05.
%Elliptic equations on manifolds, general theory

\end{abstract}

\tableofcontents

\allowdisplaybreaks

%{
%\color{red}
%
%\begin{itemize}
%\item Refer to wave \cite{wave}, Dirac \cite{dirac}, the Lorentzian paper \cite{lorentzian}.
%
%\item Mention our sesquilinear paper when discussing gauge transformations \cite{sesqui}.
%
%\item Perhaps refer to the diffeomorphisms paper in examples \cite{diffeo}.
%
%\end{itemize}
%
%}

\section{Introduction}
\label{Introduction}

Decoupling systems of (partial differential) equations is often quite useful, especially in applications, see, e.g., \cite{FH50,application1, applications2,applications3, applications4,applications5}. 
A number of diagonalization techniques have been developed over the years, in different contexts and with varying degree of mathematical rigour. In this paper we are concerned with the diagonalization of elliptic systems of pseudodifferential equations.

In 1975 Taylor \cite{taylor_diag} showed that a first order system of pseudodifferential operators can be approximately diagonalized, as soon as one can diagonalize its principal symbol in a smooth manner.  Taylor's construction establishes the existence of a diagonalizing operator by starting from the principal symbol and iteratively seeking correction terms of lower order. The matrix correction terms are prescribed to have a particular structure, so that the uniqueness of the diagonalizing operator is guaranteed, modulo an infinitely smoothing operator. An analogue of Taylor's argument in the semiclassical setting was later provided by Helffer and Sj\"ostrand \cite{helffer_sj}.

More detailed results are available for specific operators.  Let us mention a few.

In 1983 Cordes \cite{cordes} performed a rigorous analysis of the Foldy--Wouthuysen transform \cite{FH50,foldy},  a unitary operator that achieves the (approximate) decoupling of the positive and negative energy parts of the (massive) Dirac equation, using the tools of microlocal analysis.  Unlike Taylor, Cordes does not introduce artificial constraints in his construction; as a result, the unitary operator he obtains is not uniquely defined --- not even modulo an operator of order $-\infty$. In a subsequent paper \cite{cordes2} Cordes investigated whether one can correct the Foldy--Wouthuysen transform by smoothing operators so as to achieve an exact decoupling,  not merely modulo smoothing.  His analysis identified an obstruction to an exact diagonalization, expressed in terms of the nonvanishing of a certain `deficiency index'.

A semiclassical Foldy--Wouthuysen transform for the Dirac operator was later constructed by Bruneau and Robert \cite{BR99}, following Taylor's strategy.

Nenciu and Sordoni \cite{NS04} studied the approximate diagonalization and the resulting almost-invariant subspaces of multistate Klein--Gordon Hamiltonians.  Their analysis was taken further by Panati, Spoh and Teufel \cite{PST03} who, in the setting of adiabatic perturbation theory,  considered more general Hamiltonians with `internal' degrees of freedom. 

We should mention that diagonalization arguments feature prominently in various areas of the mathematical analysis and mathematical physics literature --- see, e.g., \cite{LF91,blount, LW93,wegner,flow}.  These include approaches to (block) diagonalization that do not rely on pseudodifferential techniques,  see also \cite{tretter,siedentop,cuenin}.

In this paper we will approach diagonalization of elliptic pseudodifferential systems by a different route, one that is based on the theory of pseudodifferential projections, as developed by Vassiliev and the author in \cite{part1}.  Our global invariant construction is different from those listed above, and will lead to an explicit algorithm for the determination of the full symbol of the almost-unitary diagonalizing operator. Being adjusted to pseudodifferential projections,  our construction is particularly suitable to study the relation between the spectrum of the original operator and that of its diagonalized version.
%Even though it was previously known that,  at least under certain assumptions,  one can diagonalize an elliptic system,  
We believe that the self-contained explicit construction presented here will prove useful in applications ---  for example, in the context of quantum field theory in curved spacetime,  compare \cite{bolte, cordes3, brummelhuis} and \cite{gerard1, gerard2, gerard3,shen_wrochna} --- and be more accessible for the wider mathematical physics community.  From a theoretical perspective, this paper complements the analysis of invariant subspaces of elliptic systems carried out in \cite{part1, part2}, and the spectral results therein.

\section{Statement of the problem}
\label{Statement of the problem}

Let $M$ be a closed connected manifold of dimension $d\ge 2$. We denote local coordinates on $M$ by $x=(x^1, \ldots, x^d)$ and local coordinates on the cotangent bundle $T^*M$ by $(x,\xi)=(x^1, \ldots,x^d, \xi_1, \ldots, \xi_d)$. We denote by $T'M:=T^*M\setminus \{0\}$ the cotangent bundle with the zero section removed.

Let $\mathcal{C}^\infty(M)$ be the vector space of smooth complex-valued half-densities over $M$ and let $\mathcal{L}^2(M)$ be its closure with respect to the inner product
\begin{equation*}
\prec\! f,g\! \succ=\int_M \overline{f}\,g\, dx,
\end{equation*}
where $dx:=dx^1\dots dx^n$ and the overbar denotes complex conjugation.

Given a natural number $m\ge 2$,  we define $C^\infty(M):=[\mathcal{C}^\infty(M)]^m$ to be the vector space of $m$-columns of smooth complex-valued half-densities over $M$ and $L^2(M)$ to be its closure with respect to the inner product
\begin{equation*}
\langle u,v \rangle=\int_M u^*v \,dx.
\end{equation*}
Here and further on $*$ stands for Hermitian conjugation when applied to matrices and for adjunction when applied to operators. Of course, $L^2(M):=[\mathcal{L}^2(M)]^m$.  The spaces $C^\infty(M)$ and $L^2(M)$ depend on the choice of $m$, but we suppress this dependence to simplify notation.

For $s\in \mathbb{R}$, we denote by $\mathcal{H}^s(M)$ (resp.~$H^s(M)$) the usual Sobolev space,  i.e.~the vector space of half-densities (resp.  $m$-columns of half-densities) that are in $\mathcal{L}^2(M)$ (resp.~$L^2(M)$) together with their partial derivatives up to order $s$ for $s\ge0$,  and defined by duality for negative $s$.

Let $\Psi^s_{k,n}$ be the space of classical pseudodifferential operators of order $s$ with polyhomogeneous symbol, acting from $[\mathcal{H}^s(M)]^k$ to $[\mathcal{L}^2(M)]^{n}$. In other words,  operators in $\Psi^s_{k,n}$ map $k$-columns to $n$-columns.  We will adopt the notation $\Psi^s_k:=\Psi^s_{k,k}$.  For an operator $Q\in \Psi^s_{k,n}$ we denote by $Q_\prin$ and $Q_\sub$ its principal and subprincipal symbols \cite[Proposition~5.2.1]{DuHo},  respectively.

We denote by $\mathrm{Id}_m\in \Psi^0_m$ the identity operator on $L^2(M)$, by $\mathrm{Id}\in \Psi^0_1$ the identity operator on $\mathcal{L}^2(M)$,  and by $I$ the $m\times m$ identity matrix.

Let $A\in \Psi^s_m$,  $s>0$,  be an elliptic self-adjoint linear pseudodifferential operator, where ellipticity means that
\begin{equation*}
\det A_\prin(x,\xi)\ne 0 \qquad \forall (x,\xi)\in T'M.
\end{equation*}
Throughout this paper we assume that $A_\prin$ has simple eigenvalues.  Let $m^+$ (resp.~$m^-$) be the number of positive (resp.~negative) eigenvalues of $A_\prin$.  We denote by $h^{(j)}(x,\xi)$ the eigenvalues of $A_\prin(x,\xi)$, enumerated in increasing order with positive index $j=1,\ldots,m^+$ for positive $h^{(j)}(x,\xi)$ and negative index $j=-1,\ldots,-m^-$ for negative $h^{(j)}(x,\xi)$.
We denote by
\begin{equation*}
J:=[-m^-,m^+]\cap(\mathbb{Z}\setminus\{0\})
\end{equation*}
the indexing set of admissible values of $j$.

\begin{remark}
Each $h^{(j)}$ is a nowhere-vanishing smooth real-valued function on the punctured cotangent bundle $T'M$. Furthermore, we have that $m^+$ and $m^-$ are constant and $m=m^++m^-$. These facts are immediate consequences of self-adjointness, ellipticity,  and connectedness of $M$.
\end{remark}

By $v^{(j)}(x,\xi)$ we denote a normalised eigenvector of $A_\prin(x,\xi)$ associated with the eigenvalue $h^{(j)}(x,\xi)$, and by 
\begin{equation*}
P^{(j)}(x,\xi):=v^{(j)}(x,\xi)[v^{(j)}(x,\xi)]^*
\end{equation*}
the corresponding eigenprojection.  Throughout this paper, we assume that one can choose eigenvectors $v^{(j)}(x,\xi)$,  $j\in J$,  of $A_\mathrm{prin}$ smoothly for all $(x,\xi)\in T^*M\setminus\{0\}$.

\begin{remark}
\label{remark topological obstructions}
In general, there may exist topological obstructions preventing one from being able to choose eigenvectors of the principal symbol of a matrix pseudodifferential operator in a smooth manner globally on the whole cotangent bundle, or even in the whole cotangent fibre at a single point.  In the current paper, we assume that there are no such obstructions, so that one can perform a global diagonalization.  Necessary and sufficient conditions for the existence of topological obstructions will be examined extensively in a separate paper \cite{obstructions}.
\end{remark}

Note that,  for each $j$, $P^{(j)}(x,\xi)$ is a uniquely defined rank $1$ matrix function,  whereas $v^{(j)}(x,\xi)$ is only defined up to a local gauge transformation
\begin{equation}
\label{gauge transformation}
v^{(j)}\mapsto e^{i\phi^{(j)}}v^{(j)},
\end{equation}
where $\phi^{(j)}:T'M \to \mathbb{R}$ is an arbitrary smooth function.
The eigenprojections satisfy
\begin{equation*}
A_\prin=\sum_{j\in J} h^{(j)}P^{(j)}, \qquad \sum_{j\in J} P^{(j)}=I.
\end{equation*}

The spectrum of the operator $A:H^s(M)\to L^2(M)$ is discrete and it accumulates to infinity. More precisely,  if $m^+\ge1$ it accumulates to $+\infty$, if $m^-\ge 1$ it accumulates to $-\infty$.

\begin{definition}
\label{definition of sign definiteness modulo}
We say that a symmetric pseudodifferential operator $Q\in \Psi^s_m$ is \emph{nonnegative (resp.~nonpositive) modulo} $\Psi^{-\infty}_m$
and write
\[
Q\ge0\mod\Psi^{-\infty}_m \qquad(\text{resp.}\ Q\le0\mod\Psi^{-\infty}_m)
\]
if there exists a symmetric operator $R\in\Psi^{-\infty}_m$
such that $Q+R\ge0$ (resp.~$Q+R\le0$).
\end{definition}

In \cite{part1} the author and Vassiliev proved the following results.

\begin{theorem}
\label{theorem results from part 1}
Let $A$ be as above and let $\delta$ be the Kronecker symbol.

\begin{enumerate}[(a)]

\item 
\cite[Theorem~2.2]{part1} There exist $m$ pseudodifferential operators $P_j\in \Psi^0_m$ satisfying
\begin{enumerate}[(i)]
\item 
$(P_j)_\mathrm{prin}=P^{(j)}$,

\item
$P_j=P_j^* \mod \Psi^{-\infty}_m$,

\item
$P_j P_l =\delta_{jl}\, P_j \mod \Psi^{-\infty}_m$,

\item
$\sum_{j\in J} P_j=\mathrm{Id}_m \mod \Psi^{-\infty}_m$,

\item
$[A,P_j]=0 \mod \Psi^{-\infty}_m$.
\end{enumerate}
These operators are uniquely determined, modulo $\Psi^{-\infty}_m$, by $A$.

\item
\cite[Theorem~2.5]{part1}
The operators $P_j$ from part (a) satisfy
\begin{align*}
%\label{Main results theorem 3 equation 1}
&P_j^*AP_j\ge0\mod\Psi^{-\infty}_m\quad\text{for}\quad j=1,\dots,m^+,
\\
%\label{Main results theorem 3 equation 2}
&P_j^*AP_j\le0\mod\Psi^{-\infty}_m\quad\text{for}\quad j=-1,\dots,-m^-.
\end{align*}
\end{enumerate}
\end{theorem}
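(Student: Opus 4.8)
The plan is to build the full symbol of each $P_j$ by a recursion on the order, in the spirit of Taylor's diagonalization but carried out symmetrically so that self-adjointness is built in. Fix $j\in J$ and seek $P_j=\operatorname{Op}\!\bigl(\sum_{k\ge 0}(P_j)_{-k}\bigr)$ with $(P_j)_{-k}$ positively homogeneous of degree $-k$ and $(P_j)_0:=P^{(j)}$. Throughout I would resolve every matrix-valued symbol $X$ against the globally smooth eigenprojections of $A_\prin$, writing $X=\sum_{l\in J}P^{(l)}XP^{(l)}+\sum_{l\ne l'}P^{(l)}XP^{(l')}$ and calling the two sums the block-diagonal and the off-diagonal part of $X$. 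Suppose $(P_j)_0,\dots,(P_j)_{-(k-1)}$ have been chosen so that (ii), (iii) with $l=j$, and (v) hold modulo $\Psi^{-k}_m$ (resp.\ modulo $\Psi^{s-k}_m$ for the commutator). Imposing the same relations one order lower produces a linear system for $(P_j)_{-k}$: the order $s-k$ part of $[A,P_j]=0$ reads $[A_\prin,(P_j)_{-k}]=-(\text{known})$, and since $\operatorname{ad}_{A_\prin}$ multiplies $P^{(l)}(\,\cdot\,)P^{(l')}$ by $h^{(l)}-h^{(l')}$ — which, the eigenvalues being simple, annihilates the block-diagonal part and is invertible off it — this fixes the off-diagonal part of $(P_j)_{-k}$; the order $-k$ part of $P_j^2=P_j$ reads $P^{(j)}(P_j)_{-k}+(P_j)_{-k}P^{(j)}-(P_j)_{-k}=-(\text{known})$, and since $X\mapsto P^{(j)}X+XP^{(j)}-X$ is $\pm\mathrm{id}$ on the diagonal blocks and $0$ on the off-diagonal part, this fixes the block-diagonal part; self-adjointness (ii) at order $-k$ is then an additional requirement. \textbf{The main obstacle is consistency:} one has to show that the remainders generated by the recursion are supported exactly where the above equations are solvable, that the prescriptions coming from (iii) and (v) agree wherever they overlap, and that the resulting $(P_j)_{-k}$ is automatically Hermitian to leading order. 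All of this reduces to compatibility identities among the defining relations $A=A^*$, $P_j^2=P_j$, $[A,P_j]=0$ — e.g.\ $(P_j^2-P_j)^*=(P_j^*)^2-P_j^*$ and the fact that $[A,P_j^2-P_j]$ is controlled by $[A,P_j]$ — which I would establish by an induction interleaved with the construction; I expect this to be the technical heart. (Alternatively one could first run the unsymmetrised recursion, imposing only (i), (iii) with $l=j$ and (v), and then symmetrise the resulting oblique idempotents.) Finally, an asymptotic summation of the $(P_j)_{-k}$ yields $P_j\in\Psi^0_m$, defined modulo $\Psi^{-\infty}_m$.

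The remaining relations in (a) then come for free by a rigidity argument that also gives uniqueness. If each $P_j$ ($j\in J$) satisfies (i), (ii), (iii) with $l=j$, (v), then for $j\ne l$ the operator $P_jP_l$ has principal symbol $P^{(j)}P^{(l)}=0$, commutes with $A$ modulo $\Psi^{-\infty}_m$, and satisfies $P_j(P_jP_l)=P_jP_l=(P_jP_l)P_l$ modulo $\Psi^{-\infty}_m$; hence its leading symbol is a pure $P^{(j)}(\,\cdot\,)P^{(l)}$-block lying in the kernel of $\operatorname{ad}_{A_\prin}$, which forces it to vanish, so $P_jP_l\in\Psi^{-\infty}_m$. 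It follows that $\bigl(\sum_{l\in J}P_l\bigr)^2=\sum_{l\in J}P_l$ modulo $\Psi^{-\infty}_m$ with principal symbol $I$; since a symbol of negative order that is idempotent modulo $\Psi^{-\infty}_m$ must be smoothing (iterate the relation), $\mathrm{Id}_m-\sum_{l\in J}P_l\in\Psi^{-\infty}_m$ — or one simply \emph{defines} $P_{j_0}:=\mathrm{Id}_m-\sum_{l\ne j_0}P_l$ for one fixed $j_0$, so that (iv) holds identically. For uniqueness, if $\tilde P_j$ also satisfies (i)--(v) and $r:=P_j-\tilde P_j\notin\Psi^{-\infty}_m$ has leading symbol $r_{-k}$ (of degree $-k\le-1$), then subtracting the two idempotency relations gives $P^{(j)}r_{-k}+r_{-k}P^{(j)}-r_{-k}=0$, i.e.\ $r_{-k}=P^{(j)}r_{-k}(I-P^{(j)})+(I-P^{(j)})r_{-k}P^{(j)}$, while subtracting the two commutator relations gives $[A_\prin,r_{-k}]=0$; since the eigenvalues are simple, $\operatorname{ad}_{A_\prin}$ is injective on matrices of that form, so $r_{-k}=0$, a contradiction, whence $P_j=\tilde P_j$ modulo $\Psi^{-\infty}_m$.

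\textbf{Part (b).} Here I would work with the single operator
\[
B:=\sum_{l\in J}\operatorname{sgn}(l)\,P_l^*AP_l\ \in\ \Psi^s_m .
\]
It is self-adjoint (each summand is, and $\operatorname{sgn}(l)\in\mathbb{R}$) and elliptic, with principal symbol
\[
B_\prin=\sum_{l\in J}\operatorname{sgn}(l)\,h^{(l)}P^{(l)}=\sum_{l\in J}|h^{(l)}|\,P^{(l)}=|A_\prin|,
\]
which is positive definite on $T'M$. Hence $B$ is bounded below by Gårding's inequality, and, being elliptic of positive order on a closed manifold, it has discrete spectrum with only finitely many non-positive eigenvalues; adding the corresponding spectral projection (a finite-rank, hence smoothing, symmetric operator) scaled by a large constant gives $B\ge 0$ modulo $\Psi^{-\infty}_m$. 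On the other hand, using only (iii),
\[
P_l^*BP_l=\sum_{l'\in J}\operatorname{sgn}(l')\,(P_{l'}P_l)^*A\,(P_{l'}P_l)=\operatorname{sgn}(l)\,P_l^*AP_l\qquad\text{modulo }\Psi^{-\infty}_m,
\]
because $P_{l'}P_l\in\Psi^{-\infty}_m$ for $l'\ne l$ while $P_lP_l=P_l$ modulo $\Psi^{-\infty}_m$. Since the map $X\mapsto P_l^*XP_l$ preserves the relation "$\ge 0$ modulo $\Psi^{-\infty}_m$", we conclude that $\operatorname{sgn}(l)\,P_l^*AP_l\ge 0$ modulo $\Psi^{-\infty}_m$ for every $l\in J$; that is, $P_l^*AP_l\ge 0$ modulo $\Psi^{-\infty}_m$ for $l=1,\dots,m^+$ and $P_l^*AP_l\le 0$ modulo $\Psi^{-\infty}_m$ for $l=-1,\dots,-m^-$, which is the assertion of part (b).
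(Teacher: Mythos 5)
The theorem is quoted from \cite{part1} (Theorems~2.2 and~2.5 there) and is not proved in the present paper, so there is no internal proof to compare against; I assess your sketch on its own merits. For part~(a) the structure is right and matches the standard iterative construction from \cite{part1}: build the full symbol order by order, splitting each homogeneous component into blocks $P^{(l)}(\,\cdot\,)P^{(l')}$, with the commutator condition controlling off-diagonal blocks via invertibility of $\operatorname{ad}_{A_\prin}$ there and the idempotency condition controlling the rest. One slip: $L(X):=P^{(j)}X+XP^{(j)}-X$ is \emph{not} zero on all off-diagonal blocks --- it equals $-X$ on the blocks $P^{(l)}XP^{(l')}$ with $l,l'\ne j$, and its kernel is exactly $P^{(j)}(\,\cdot\,)(I-P^{(j)})\oplus(I-P^{(j)})(\,\cdot\,)P^{(j)}$. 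The joint kernel with $\operatorname{ad}_{A_\prin}$ is still trivial, so your uniqueness argument and the derivations that $P_jP_l\in\Psi^{-\infty}_m$ for $j\ne l$ and that (iv) follows are fine; but the two linear equations now constrain overlapping blocks, and that is exactly why the compatibility and self-adjointness checks you flag as the ``technical heart'' are a genuine gap rather than routine bookkeeping. As it stands, the existence half of~(a) is a structurally correct plan, not yet a proof.

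Part~(b) is correct and cleanly done. The operator $B:=\sum_{l\in J}\operatorname{sgn}(l)\,P_l^*AP_l$ is self-adjoint, elliptic of order $s>0$, with positive-definite principal symbol $|A_\prin|$, so on a closed manifold it is bounded below with finitely many nonpositive eigenvalues; a finite-rank (hence smoothing) symmetric correction makes it nonnegative, i.e.\ $B\ge0\mod\Psi^{-\infty}_m$. Conjugating by $P_l$ preserves this relation and, by~(iii), produces $\operatorname{sgn}(l)\,P_l^*AP_l$ modulo smoothing, which is the claim. Note that this auxiliary $B$ is essentially a superposition of the operators $A_j$ defined in \eqref{definition of Aj}, so your argument dovetails naturally with the spectral analysis carried out later in the paper.
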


The operators $P_j$ from the above theorem form an orthonormal basis of pseudodifferential projections commuting with $A$,  and an explicit algorithm for the construction of their full symbols was given in \cite{part1}.  Pseudodifferential projections were exploited in \cite{part2} to partition the spectrum of $A$ into precisely $m$ infinite series of eigenvalues,  singling out the contribution to the spectrum of $A$ of individual eigenvalues of $A_\mathrm{prin}$ (more details about this will be recalled later on).  Theorem~\ref{theorem results from part 1} tells us that pseudodifferential projections allow one to decompose $A$ into $m$ sign-semidefinite operators $P_j^*AP_j\in \Psi^s_m$. However, these operators are not elliptic, and this was one of the major obstacles one had to overcome in \cite{part2}.

\

We take here a different perspective on the problem and ask the following questions.

\

{\bf Question 1\ } 
Does there exist a pseudodifferential operator $B\in \Psi^{0}_m$ such that
\begin{equation}
\label{B almost unitary}
B^*B=\operatorname{Id}_m \mod \Psi^{-\infty}_m, \qquad BB^*=\operatorname{Id}_m \mod \Psi^{-\infty}_m,
\end{equation}
and
\begin{equation}
\label{B diagonalises}
B^*AB=\tilde A \mod \Psi^{-\infty}_m,
\end{equation}
where $\tilde A\in \Psi^s_m$ is a diagonal operator?

\

{\bf Question 2\ }
Assuming the answer to Question 1 is affirmative, can one exploit pseudo\-differential projections to construct the operator $B$ explicitly?

\

A positive answer to Question~2 would yield an explicit characterisation of the diagonal operator $\tilde A$.  In particular, it is easy to see that the operators on the diagonal of $\tilde A$ would be `scalar' elliptic operators $a_j\in \Psi^s_1$ with $(a_j)_\prin=h^{(j)}$.  This leads to a further question.

\

{\bf Question 3\ } What is the relation between the spectrum of $A$ and the spectra of $a_j$, $j\in J$?

\

The goal of this paper is to provide a detailed answer to Questions~1, 2 and 3 and discuss some applications.

\section{Main results}
\label{Main results}

Our main results are summarised in this section in the form of five theorems.

\begin{theorem}
\label{main theorem 1}
For each $j\in J$ there exists a pseudodifferential operator $B_j\in \Psi^0_{1,m}$ such that
\begin{equation}
\label{main theorem 1 equation 1}
(B_j)_\prin=v^{(j)},
\end{equation}
\begin{equation}
\label{main theorem 1 equation 2}
B_j^* B_j=\mathrm{Id} \mod \Psi^{-\infty}_1,
\end{equation}
\begin{equation}
\label{main theorem 1 equation 3}
P_j B_j=B_j \mod \Psi^{-\infty}_{1,m}.
\end{equation}
\end{theorem}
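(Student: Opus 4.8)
The plan is to build $B_j$ as a pseudodifferential operator whose symbol is constructed order-by-order, starting from the normalised eigenvector $v^{(j)}$ as the principal symbol, and to exploit the already-constructed projection $P_j$ from Theorem 2.2 to enforce condition \eqref{main theorem 1 equation 3} at every step. The natural guess is to set $B_j := P_j \circ \Lambda_j$, where $\Lambda_j \in \Psi^0_{1,m}$ is a suitable "reference" column operator with principal symbol $v^{(j)}$ (for instance, the operator with full symbol exactly equal to $v^{(j)}(x,\xi)$ in some quantisation, possibly composed with a cutoff removing a neighbourhood of the zero section so that the symbol is globally smooth). By property (i) of Theorem 2.2, $(P_j)_\prin = P^{(j)} = v^{(j)}[v^{(j)}]^*$, and since $P^{(j)} v^{(j)} = v^{(j)}$, the composition has the correct principal symbol, $(B_j)_\prin = P^{(j)} v^{(j)} = v^{(j)}$, giving \eqref{main theorem 1 equation 1}. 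Property \eqref{main theorem 1 equation 3} is then automatic modulo $\Psi^{-\infty}$: $P_j B_j = P_j^2 \Lambda_j = P_j \Lambda_j = B_j$ modulo $\Psi^{-\infty}_{1,m}$, using property (iii) of Theorem 2.2 with $l=j$ (idempotency of $P_j$ modulo smoothing) together with property (ii) to handle adjoints where needed.

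The remaining work is to arrange the normalisation \eqref{main theorem 1 equation 2}. Here $B_j^* B_j \in \Psi^0_1$ is a scalar pseudodifferential operator with principal symbol $[v^{(j)}]^* v^{(j)} = 1$, hence it is elliptic, self-adjoint modulo $\Psi^{-\infty}_1$, and positive modulo $\Psi^{-\infty}_1$; consequently it admits a parametrix for its positive square root. Concretely, I would let $G_j := (B_j^* B_j)^{-1/2}$ be constructed as a classical pseudodifferential operator in $\Psi^0_1$ (its symbol is obtained recursively from that of $B_j^* B_j$ via the standard symbolic calculus for functional calculus of elliptic operators, or by the usual Neumann-type iteration starting from $(G_j)_\prin = 1$), so that $G_j^* (B_j^* B_j) G_j = \mathrm{Id} \mod \Psi^{-\infty}_1$ and $G_j = G_j^* \mod \Psi^{-\infty}_1$. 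Replacing $B_j$ by $B_j G_j$ then yields \eqref{main theorem 1 equation 2}. One must check that this replacement does not spoil the other two properties: the principal symbol becomes $v^{(j)} \cdot 1 = v^{(j)}$ since $(G_j)_\prin = 1$, so \eqref{main theorem 1 equation 1} survives; and $P_j (B_j G_j) = (P_j B_j) G_j = B_j G_j \mod \Psi^{-\infty}_{1,m}$ since $G_j$ acts on the scalar side, so \eqref{main theorem 1 equation 3} survives as well. Thus after this one correction all three conditions hold simultaneously.

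The main obstacle — or rather the main point requiring care — is verifying that the square-root correction is genuinely compatible with \eqref{main theorem 1 equation 3}, i.e. that composing on the right with the scalar operator $G_j$ commutes with the requirement $P_j B_j = B_j$; this is true precisely because $P_j$ multiplies on the \emph{left} in the $m$-column picture while $G_j$ multiplies on the \emph{right} in the scalar picture, so the two operations act on "different sides" and cannot interfere, even at the level of lower-order symbols. A secondary technical point is to ensure that $B_j^* B_j$ is not merely elliptic but positive modulo $\Psi^{-\infty}_1$, so that its square root exists as a bona fide classical pseudodifferential operator of order $0$; this follows because $B_j^* B_j = \langle B_j u, B_j u\rangle$-type quadratic form is manifestly $\ge 0$, and combined with ellipticity (principal symbol $\equiv 1 > 0$) one gets positivity modulo $\Psi^{-\infty}_1$ in the sense of Definition 2.4. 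With these two points settled, the construction closes and all computations reduce to routine applications of the symbolic calculus recalled in Section 2.
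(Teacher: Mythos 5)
Your proof is correct, but it takes a genuinely different route from the paper's. The paper constructs $B_j$ as an asymptotic sum $B_{j,0}+Q_{j,1}+Q_{j,2}+\cdots$ of correction terms of decreasing order: at stage $k$ it imposes \eqref{main theorem 1 equation 2} and \eqref{main theorem 1 equation 3} modulo $\Psi^{-k-1}$, which produces a linear system for $(Q_{j,k})_\prin$; the solvability condition $P^{(j)}R_{j,k}=0$ for the projection equation is verified, and the system is solved explicitly, with one real-valued scalar degree of freedom $f_{j,k}$ appearing at each order. Your argument instead factorises the construction: first enforce \eqref{main theorem 1 equation 1} and \eqref{main theorem 1 equation 3} all at once by setting $B_j:=P_j\Lambda_j$ for any $\Lambda_j\in\Psi^0_{1,m}$ with principal symbol $v^{(j)}$ (condition \eqref{main theorem 1 equation 3} becoming an immediate consequence of the idempotency of $P_j$ modulo smoothing), and then repair \eqref{main theorem 1 equation 2} by a right-composition with a self-adjoint parametrix $G_j$ for $(B_j^*B_j)^{1/2}$, which exists because $B_j^*B_j\in\Psi^0_1$ is elliptic with principal symbol identically $1$. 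Your observation that left-multiplication by $P_j$ and right-multiplication by the scalar $G_j$ cannot interfere is exactly what makes this clean factorisation work. The trade-off is worth noting: the paper's recursive construction \emph{is} the algorithm of subsection~\ref{The algorithm}, so it directly delivers a formula for the full symbol of $B_j$ and makes explicit the gauge freedom $f_{j,k}$ at every order, which is then exploited in the closed formula for $(B_j)_\sub$ of Theorem~\ref{main theorem 2}; your construction is shorter and more conceptual, but the symbol of $(B_j^*B_j)^{-1/2}$ is only implicitly determined, and the per-order degrees of freedom are not visible (your $B_j$ is essentially pinned down once $\Lambda_j$ and the gauge on $v^{(j)}$ are fixed). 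One small technical remark: the positivity of $B_j^*B_j$ that you invoke is slightly more than needed --- since you only require a parametrix for the square root modulo $\Psi^{-\infty}_1$, it suffices that the principal symbol $1$ admits a smooth positive square root, and the possible finite-dimensional kernel of $B_j^*B_j$ as an actual operator is irrelevant at the symbolic level.
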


An explicit algorithm for the calculation of the full symbol of the operators $B_j$ is given in subsection~\ref{The algorithm}.  Such algorithm shows that the operators $B_j$ are not uniquely determined by conditions \eqref{main theorem 1 equation 1}--\eqref{main theorem 1 equation 3} and characterises the degrees of freedom left in the symbol.

By carrying out the first iteration of the algorithm one obtains the following.
\begin{theorem}
\label{main theorem 2}
The subprincipal symbol of $B_j$ reads
\begin{equation}
\label{main theorem 2 equation 1}
(B_j)_\sub=\left(\frac14\operatorname{tr} ((P_j)_\mathrm{sub})+i\,f^{(j)}\right)v^{(j)}+(P_j)_\sub v^{(j)}+\frac i{2}\{P^{(j)},v^{(j)}\},
\end{equation}
where $f^{(j)}:T'M \to \mathbb{R}$ is an arbitrary smooth real-valued scalar function.
\end{theorem}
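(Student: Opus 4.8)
The plan is to construct $B_j$ by the standard pseudodifferential iteration scheme adapted to the projection $P_j$, and then to read off the subprincipal symbol from the first two steps of that construction. First I would write $B_j=\operatorname{Op}(b_j)$ where $b_j\sim b_j^{(0)}+b_j^{(-1)}+\dots$ is a polyhomogeneous symbol of order $0$ valued in $m\times 1$ matrices, with $b_j^{(0)}=v^{(j)}$ as prescribed by \eqref{main theorem 1 equation 1}. The three conditions \eqref{main theorem 1 equation 1}--\eqref{main theorem 1 equation 3} are then expanded order by order using the composition formula for symbols. At order $0$ they hold automatically: $(v^{(j)})^*v^{(j)}=1$, and $P^{(j)}v^{(j)}=v^{(j)}$. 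At the next order they become a pair of (scalar, resp.\ $m$-vector) linear equations for the unknown $b_j^{(-1)}$, involving the already-known data $P^{(j)}$, $(P_j)_\sub$, $v^{(j)}$ and their first derivatives. The key observation is that the constraint $P_jB_j=B_j\bmod\Psi^{-\infty}$ forces $b_j^{(-1)}$ to lie, up to a correction term, in the range of $P^{(j)}$, i.e.\ to be a multiple of $v^{(j)}$ plus a piece determined by $(P_j)_\sub$ and a Poisson-bracket term; and the normalisation $B_j^*B_j=\operatorname{Id}\bmod\Psi^{-\infty}$ fixes the real part of the $v^{(j)}$-coefficient, leaving its imaginary part free — this is the origin of the arbitrary real function $f^{(j)}$ in \eqref{main theorem 2 equation 1}.

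Concretely, I would proceed as follows. Step 1: recall (or quote from \cite{DuHo}) the formula expressing the subprincipal symbol of a product $Q_1Q_2$ in terms of the principal and subprincipal symbols of the factors, including the Poisson-bracket term $-\tfrac i2\{(Q_1)_\prin,(Q_2)_\prin\}$, and the formula for the subprincipal symbol of an adjoint, which for a non-square operator relates $(B_j^*)_\sub$ to $\overline{(B_j)_\sub}^{\,*}$ together with a $\tfrac i2\operatorname{tr}$-type correction coming from the half-density normalisation. Step 2: apply this to $B_j^*B_j=\operatorname{Id}$ at order $-1$ to obtain a scalar identity; its real part determines $\operatorname{Re}\big((v^{(j)})^*(B_j)_\sub\big)$ in terms of $\operatorname{tr}((P_j)_\sub)$ and of the derivatives of $v^{(j)}$, while the imaginary part is the definition of $f^{(j)}$. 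Step 3: apply the composition formula to $P_jB_j=B_j$ at order $-1$; using $(P_j)_\prin=P^{(j)}$, $P^{(j)}v^{(j)}=v^{(j)}$, $(P^{(j)})^2=P^{(j)}$ and the known expression $(P_j)_\sub$ from \cite{part1}, this yields $(I-P^{(j)})(B_j)_\sub$ explicitly as $(P_j)_\sub v^{(j)}+\tfrac i2\{P^{(j)},v^{(j)}\}$ (one should check that the right-hand side indeed lies in the range of $I-P^{(j)}$, which follows from differentiating $P^{(j)}v^{(j)}=v^{(j)}$ and $(P^{(j)})^2=P^{(j)}$). Step 4: add the two pieces $P^{(j)}(B_j)_\sub$ and $(I-P^{(j)})(B_j)_\sub$ and simplify, noting $P^{(j)}(P_j)_\sub v^{(j)}$ contributes the $(P_j)_\sub v^{(j)}$ projected part and the $\tfrac14\operatorname{tr}((P_j)_\sub)$ term comes from Step 2; collecting everything gives \eqref{main theorem 2 equation 1}.

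The main obstacle I anticipate is bookkeeping of the various $\tfrac i2$-factors and trace corrections: because $B_j$ maps $1$-columns to $m$-columns, the adjoint $B_j^*$ lives in $\Psi^0_{m,1}$ and the subprincipal-symbol calculus for rectangular operators on half-densities must be handled carefully (the $\operatorname{tr}$ in $\tfrac14\operatorname{tr}((P_j)_\sub)$ ultimately descends from the $\tfrac i2\operatorname{div}$-type term in the composition/adjoint formulas combined with the identity $\operatorname{tr}((P_j)_\sub)=\operatorname{tr}(P^{(j)}(P_j)_\sub)$ coming from $P_jP_j=P_j$ at subprincipal level). A secondary point requiring care is verifying that the ansatz is consistent, i.e.\ that the same $b_j^{(-1)}$ simultaneously solves all three order-$(-1)$ equations; this consistency is exactly where the arbitrariness parametrised by $f^{(j)}$ enters, and I would phrase it as: the equations determine $(B_j)_\sub$ uniquely once $f^{(j)}$ is fixed, and every choice of smooth real $f^{(j)}$ is admissible. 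Everything else — the explicit Poisson bracket $\{P^{(j)},v^{(j)}\}$ and the substitution of $(P_j)_\sub$ from \cite{part1} — is routine symbolic computation that I would not carry out in detail here.
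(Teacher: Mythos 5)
Your overall strategy is the same as the paper's: expand the three conditions at order $-1$, use $P_jB_j=B_j$ to determine $(I-P^{(j)})(B_j)_\sub$, use $B_j^*B_j=\operatorname{Id}$ to determine the real part of $[v^{(j)}]^*(B_j)_\sub$, and absorb the free imaginary part into $f^{(j)}$. (The paper phrases this via a normalisation $(B_{j,0})_\sub=0$ and the recurrence $(Q_{j,1})_\prin=(\tfrac12 r_{j,1}+if_{j,1})v^{(j)}-R_{j,1}$, but that is the same parallel/orthogonal split in disguise.) Your Step 3 is correct: the composition formula gives exactly $(I-P^{(j)})(B_j)_\sub=(P_j)_\sub v^{(j)}+\tfrac i2\{P^{(j)},v^{(j)}\}$, and the solvability check $[v^{(j)}]^*\bigl((P_j)_\sub v^{(j)}+\tfrac i2\{P^{(j)},v^{(j)}\}\bigr)=0$ is the one in the paper's remark.

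There is, however, a genuine gap in Step 2. From $(B_j^*B_j)_\sub=0$ one gets
\[
[v^{(j)}]^*(B_j)_\sub+[(B_j)_\sub]^*v^{(j)}+\tfrac i2\{[v^{(j)}]^*,v^{(j)}\}=0,
\]
so $\operatorname{Re}\bigl([v^{(j)}]^*(B_j)_\sub\bigr)=-\tfrac i4\{[v^{(j)}]^*,v^{(j)}\}$. To turn this into the stated $\tfrac14\operatorname{tr}((P_j)_\sub)$ you need the identity
\[
\{[v^{(j)}]^*,v^{(j)}\}=i\operatorname{tr}\bigl((P_j)_\sub\bigr),
\]
which is \emph{not} a formal consequence of $P_jP_j=P_j$ at subprincipal level. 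Your suggested route --- the "$\tfrac i2\operatorname{div}$-type term" plus $\operatorname{tr}((P_j)_\sub)=\operatorname{tr}(P^{(j)}(P_j)_\sub)$ --- does not close this: taking the trace of $(P_j^2)_\sub=(P_j)_\sub$ gives $\operatorname{tr}((P_j)_\sub)=2\operatorname{tr}(P^{(j)}(P_j)_\sub)+\tfrac i2\operatorname{tr}\{P^{(j)},P^{(j)}\}$, which is a different (and weaker) relation. The identity actually used in the paper follows from the explicit formula for $(P_j)_\sub$ in \cite[Theorem~2.3]{part1} (equivalently, from combining \cite[Eqn.~(1.20)]{CDV} with \cite[Corollary~4.2]{part2}); without it the $\tfrac14\operatorname{tr}((P_j)_\sub)$ coefficient cannot be obtained. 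A secondary, smaller issue: there is no extra "$\tfrac i2\operatorname{tr}$-type correction" in the adjoint rule for subprincipal symbols on half-densities; for rectangular operators one still has $(B_j^*)_\sub=\bigl((B_j)_\sub\bigr)^*$ with no additional term, and invoking such a correction would in fact spoil the self-consistency of your Step 2.
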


The curly brackets appearing in \eqref{main theorem 2 equation 1} denote the Poisson bracket
\begin{equation*}
\label{poisson brackets}
\{B,C\}:=\sum_{\alpha=1}^d(B_{x^\alpha} C_{\xi_\alpha}- B_{\xi_\alpha} C_{x^\alpha})
\end{equation*}
on matrix-functions on the cotangent bundle.
Further on in the paper we will also make use
of the generalised Poisson bracket
\begin{equation*}
\label{generalised poisson brackets}
\{ B,C,D\}:=\sum_{\alpha=1}^d(B_{x^\alpha} C D_{\xi_\alpha}- B_{\xi_\alpha} C D_{x^\alpha}).
\end{equation*}

The operators $B_j$ constitute the building blocks --- more precisely, the `columns' --- of the sought-after almost-unitary operator $B$.
\begin{theorem}
\label{main theorem 3}
The operator $B \in \Psi^{0}_m$ defined in accordance with
\begin{equation}
\label{main theorem 3 equation 1}
B: 
u=\begin{pmatrix}
u_{m^+}\\
\vdots\\
u_1\\
u_{-1}\\
\vdots\\
u_{-m^-}
\end{pmatrix}
\mapsto
\sum_j B_j u_j
\end{equation}
satisfies conditions \eqref{B almost unitary} and \eqref{B diagonalises}. The diagonalised operator $\tilde A$ appearing in \eqref{B diagonalises} reads
\begin{equation}
\label{main theorem 3 equation 2}
\tilde{A}=\operatorname{diag}(a_{m^+}, \ldots, a_1,a_{-1}, \ldots, a_{-m^-}),
\end{equation}
where
\begin{equation}
\label{main theorem 3 equation 3}
a_j:=B_j^* A B_j \in \Psi^{s}_1\,.
\end{equation}
\end{theorem}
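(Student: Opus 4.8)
The plan is to assemble the global operator $B$ from the columns $B_j$ produced by Theorem~\ref{main theorem 1} and to verify the three assertions of Theorem~\ref{main theorem 3} one at a time, leaning heavily on the orthonormality relations \eqref{main theorem 1 equation 2}, the intertwining relations \eqref{main theorem 1 equation 3}, the properties (i)--(v) of the projections $P_j$ in Theorem~\ref{theorem results from part 1}(a), and standard symbolic calculus modulo $\Psi^{-\infty}$. First I would record the consequence of \eqref{main theorem 1 equation 3} that, since the $P_j$ are mutually orthogonal projections modulo $\Psi^{-\infty}_m$, one also has $P_l B_j = \delta_{lj} B_j$ and, taking adjoints, $B_j^* P_l = \delta_{lj} B_j^*$, all modulo smoothing; at the symbol level this is just $P^{(l)} v^{(j)} = \delta_{lj} v^{(j)}$ and $(v^{(j)})^* P^{(l)} = \delta_{lj} (v^{(j)})^*$.

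The key computation is the cross-term estimate $B_j^* B_l = 0 \mod \Psi^{-\infty}_1$ for $j \neq l$. From \eqref{main theorem 1 equation 3} we may write, modulo smoothing, $B_j^* B_l = B_j^* P_j P_l B_l = 0$, using $P_j P_l = \delta_{jl} P_j \mod \Psi^{-\infty}_m$; combined with \eqref{main theorem 1 equation 2} this gives $B_j^* B_l = \delta_{jl}\,\mathrm{Id} \mod \Psi^{-\infty}_1$. With the block structure of \eqref{main theorem 3 equation 1} in hand, $B^* B$ is the $m \times m$ operator matrix with $(j,l)$ entry $B_j^* B_l$, which is therefore $\mathrm{Id}_m$ modulo $\Psi^{-\infty}_m$; this is the first half of \eqref{B almost unitary}. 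For the second half, $B B^* = \sum_{j} B_j B_j^*$, and I would identify $B_j B_j^*$ with an operator whose principal symbol is $P^{(j)}$ and which, by \eqref{main theorem 1 equation 3}, satisfies $P_j (B_j B_j^*) = B_j B_j^* = (B_j B_j^*) P_j \mod \Psi^{-\infty}_m$; one then checks that $B_j B_j^* = P_j \mod \Psi^{-\infty}_m$ (for instance, $B_j B_j^* = P_j B_j B_j^* P_j$ and $P_j B_j B_j^* P_j$ agrees with $P_j$ modulo smoothing because $B_j^* B_j = \mathrm{Id}$ forces $B_j$ to be a partial isometry onto the range of $P_j$ modulo smoothing — this is where a short argument, e.g. comparing $B_j B_j^* \cdot B_j = B_j$ with $P_j B_j = B_j$ and using that $P_j$ is the unique projection onto the closure of $\operatorname{ran} B_j$ modulo $\Psi^{-\infty}$, is needed), and hence $B B^* = \sum_j P_j = \mathrm{Id}_m \mod \Psi^{-\infty}_m$ by property (iv).

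For \eqref{B diagonalises}, the $(j,l)$ entry of $B^* A B$ is $B_j^* A B_l$. Using $[A, P_l] = 0 \mod \Psi^{-\infty}_m$ from (v) together with $P_l B_l = B_l$ and $B_j^* P_j = B_j^*$, one gets $B_j^* A B_l = B_j^* A P_l B_l = B_j^* P_l A B_l \mod \Psi^{-\infty}_m = \delta_{jl}\, B_j^* A B_l$, so all off-diagonal entries are smoothing and the diagonal entries are exactly the $a_j = B_j^* A B_l$ of \eqref{main theorem 3 equation 3}; this proves \eqref{B diagonalises} with $\tilde A$ as in \eqref{main theorem 3 equation 2}. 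It remains to note $a_j \in \Psi^s_1$, which is immediate from $B_j \in \Psi^0_{1,m}$, $A \in \Psi^s_m$, $B_j^* \in \Psi^0_{m,1}$ and the composition rules; its principal symbol is $(v^{(j)})^* A_\prin v^{(j)} = h^{(j)}$, so $a_j$ is scalar and elliptic. The main obstacle I anticipate is the clean verification that $B_j B_j^* = P_j \mod \Psi^{-\infty}_m$ — i.e. that the partial-isometry relation $B_j^* B_j = \mathrm{Id}$ together with $P_j B_j = B_j$ pins down $B_j B_j^*$ modulo smoothing; everything else is bookkeeping with the symbolic calculus and the block decomposition.
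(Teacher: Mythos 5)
Your bookkeeping is correct and follows the same route as the paper: $B^*B=\operatorname{Id}_m\mod\Psi^{-\infty}_m$ from $B_j^*B_l=\delta_{jl}\operatorname{Id}\mod\Psi^{-\infty}_1$ (which, as you say, is a direct consequence of \eqref{main theorem 1 equation 2}--\eqref{main theorem 1 equation 3} and Theorem~\ref{theorem results from part 1}(a)), and $B_l^*AB_j=\delta_{lj}B_j^*AB_j\mod\Psi^{-\infty}_1$ by inserting $P_l,P_j$ and commuting them through $A$ via property (v). You have also correctly isolated the one genuinely nontrivial step, $B_jB_j^*=P_j\mod\Psi^{-\infty}_m$; in the paper this is exactly Proposition~\ref{proposition on Bj vs Pj}, proved separately in subsection~\ref{Recovering pseudodifferential projections from the operators $B_j$} and then quoted.

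Where your proposal is incomplete is the sketch you offer for that step. Appealing to ``$P_j$ being the unique projection onto the closure of $\operatorname{ran}B_j$ modulo $\Psi^{-\infty}$'' is not a well-posed uniqueness statement in this calculus: the uniqueness in Theorem~\ref{theorem results from part 1}(a) is uniqueness of the orthonormal family of projections \emph{commuting with $A$}, and you have not shown that $B_jB_j^*$ commutes with $A$ modulo smoothing (this does not follow directly from \eqref{main theorem 1 equation 1}--\eqref{main theorem 1 equation 3}). A clean way to close the gap is the following order-by-order argument. Set $Q:=B_jB_j^*$ and $S:=P_j-Q$. From \eqref{main theorem 1 equation 2}--\eqref{main theorem 1 equation 3} and properties (ii)--(iii) of Theorem~\ref{theorem results from part 1}(a) one has, modulo $\Psi^{-\infty}_m$, $Q^2=B_j(B_j^*B_j)B_j^*=Q$, $P_jQ=(P_jB_j)B_j^*=Q$ and $QP_j=B_j(B_j^*P_j)=Q$, hence $S^2=P_j-P_jQ-QP_j+Q^2=S\mod\Psi^{-\infty}_m$. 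Since $S_\prin=P^{(j)}-v^{(j)}[v^{(j)}]^*=0$, one has $S\in\Psi^{-1}_m$; and if $S\in\Psi^{-k}_m$ with $k\ge1$ then $S^2\in\Psi^{-2k}_m\subset\Psi^{-k-1}_m$, so $(S)_{\prin,k}=(S^2)_{\prin,k}=0$ and $S\in\Psi^{-k-1}_m$. By induction $S\in\Psi^{-\infty}_m$, i.e.\ $B_jB_j^*=P_j\mod\Psi^{-\infty}_m$; the paper's Proposition~\ref{proposition on Bj vs Pj} establishes the same identity by a closely related symbol-by-symbol contradiction. With this lemma supplied, your chain $BB^*=\sum_jB_jB_j^*=\sum_jP_j=\operatorname{Id}_m\mod\Psi^{-\infty}_m$ and the remainder of the proposal are correct.
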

Of course, Theorem~\ref{main theorem 3} follows easily from Theorem~\ref{main theorem 1}. The full symbols of both $B$ and the $a_j$'s are explicitly determined by the full symbols of the $B_j$'s.

\

The next two theorems establish a relation between the spectrum of our original operator $A$ and that of the $a_j$, $j\in J$, and show that the diagonalization procedure does not alter the spectrum substantially.

For a self-adjoint operator $Q$, we shall denote its spectrum by $\sigma(Q)$ and by 
\begin{equation*}
\sigma^+(Q):=\sigma(Q) \cap (0,+\infty)
\end{equation*}
its positive spectrum.

Let 
\begin{equation}
\label{positive spectrum A}
0<\lambda_1 \le \lambda_2 \le \ldots\le \lambda_k\le \ldots \to +\infty
\end{equation}
be the positive eigenvalues of $A$, enumerated with account of multiplicity.  Suppose $m^+\ge 1$ and let 
\begin{equation}
\label{positive spectrum a_j}
0<\ell^{(j)}_1 \le \ell^{(j)}_2 \le \ldots\le \ell^{(j)}_k\le \ldots \to +\infty
\end{equation}
be the positive eigenvalues of $a_j$, $j=1,\ldots, m^+$, also enumerated with account of multiplicity.

\begin{theorem}
\label{main theorem 4}

\begin{enumerate}[(a)]

\item We have
\begin{equation}
\label{main theorem 4 equation 1}
\operatorname{dist}(\lambda_k, \textstyle\bigcup_{j=1}^{m^+} \sigma^+(a_j))=O(k^{-\infty}) \quad \text{as}\quad k\to+\infty.
\end{equation}

\item
For each $j=1,\ldots, m^+$ we have
\begin{equation}
\label{main theorem 4 equation 2}
\operatorname{dist}(\ell^{(j)}_k, \sigma^+(A))=O(k^{-\infty})\quad \text{as}\quad k\to+\infty.
\end{equation}

\end{enumerate}
\end{theorem}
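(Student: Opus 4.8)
The plan is to reduce everything to the unitary case and then to a standard perturbation argument for the perturbation by an infinitely smoothing operator. By Theorem~\ref{main theorem 3} we have $B^*AB=\tilde A\mod\Psi^{-\infty}_m$, and $B$ is almost-unitary in the sense of \eqref{B almost unitary}. The first step is to upgrade $B$ to a genuine unitary operator: since $B^*B=\operatorname{Id}_m+K$ with $K\in\Psi^{-\infty}_m$ self-adjoint, the operator $\operatorname{Id}_m+K$ is positive and invertible (its spectrum is $1$ plus a sequence converging to $0$), so $(\operatorname{Id}_m+K)^{-1/2}$ is a well-defined bounded operator differing from $\operatorname{Id}_m$ by a smoothing operator, and $U:=B(\operatorname{Id}_m+K)^{-1/2}$ satisfies $U^*U=\operatorname{Id}_m$. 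One checks, using \eqref{B almost unitary} again, that $UU^*-\operatorname{Id}_m\in\Psi^{-\infty}_m$, hence after a further correction on the (finite-dimensional, up to smoothing) orthogonal complements of the ranges one may take $U$ genuinely unitary with $U-B\in\Psi^{-\infty}_m$. Then $U^*AU=\tilde A+R$ with $R:=U^*AU-\tilde A\in\Psi^{-\infty}_m$ self-adjoint, because $U^*AU$ differs from $B^*AB$ by a smoothing operator (here one uses that $A$ is of finite order so that $A\cdot(\text{smoothing})$ and $(\text{smoothing})\cdot A$ are smoothing).

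The second step records that, since $U$ is unitary, $\sigma(A)=\sigma(U^*AU)=\sigma(\tilde A+R)$ with multiplicities, while $\tilde A=\operatorname{diag}(a_{m^+},\dots,a_{-m^-})$ gives $\sigma(\tilde A)=\bigcup_{j\in J}\sigma(a_j)$ with multiplicities. So \eqref{main theorem 4 equation 1} and \eqref{main theorem 4 equation 2} become statements comparing the eigenvalues of the self-adjoint operator $\tilde A+R$ with those of $\tilde A$, where $R$ is infinitely smoothing. Restricting attention to the positive spectrum: both $A$ and $\tilde A$ have finitely many eigenvalues below any fixed level and accumulate only at $+\infty$ (this uses Theorem~\ref{theorem results from part 1}(b), which gives $(a_j)_\prin=h^{(j)}$ and hence that $a_j$ is elliptic and semibounded below for $j>0$), so the enumerations \eqref{positive spectrum A} and \eqref{positive spectrum a_j} make sense and, away from a bounded set, the combined eigenvalue counting functions of $\tilde A$ and of $\tilde A+R$ differ by $O(1)$.

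The third step is the quantitative comparison. The key input is that, for any $N$, the operator $R$ maps $H^{-N}(M)$ to $H^{N}(M)$, equivalently $\Lambda^N R\Lambda^N$ is bounded where $\Lambda:=(\operatorname{Id}_m+A^*A)^{1/4}$ (or any fixed elliptic operator of order $s/2$ with positive principal symbol); hence $\|R\,E_{\tilde A}((\lambda,\infty))\|=O(\lambda^{-\infty})$ and similarly on the other side, where $E$ denotes spectral projections. Combining this with the min-max principle in the form of a ``resolvent'' or spectral-shift estimate: for the $k$-th eigenvalue one localises near $\lambda_k$ using a spectral cutoff, notes that on the spectral subspace of $\tilde A+R$ corresponding to energies $\asymp\lambda_k$ the perturbation $R$ has operator norm $O(\lambda_k^{-\infty})$, and since (by Weyl asymptotics for the elliptic operators $a_j$, or just by the accumulation to $+\infty$) $\lambda_k\to+\infty$ grows at least like a positive power of $k$, one gets $\operatorname{dist}(\lambda_k,\sigma(\tilde A))=O(\lambda_k^{-\infty})=O(k^{-\infty})$, which is \eqref{main theorem 4 equation 1}; the symmetric argument with the roles of $\tilde A+R$ and $\tilde A$ interchanged gives \eqref{main theorem 4 equation 2}.

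The main obstacle is the third step: one must convert the purely qualitative statement ``$R$ is infinitely smoothing'' into the rate $O(k^{-\infty})$ for the distance of the $k$-th eigenvalue to the other spectrum, uniformly in $k$. This requires care because an additive smoothing perturbation can in principle move a single eigenvalue by an $O(1)$ amount if there is no spectral gap; the resolution is that the \emph{relevant} bound is not $\|R\|$ but the norm of $R$ sandwiched between high-energy spectral projections of $\tilde A$, which decays faster than any power of the energy, together with the elementary fact that adding a bounded self-adjoint operator of norm $\varepsilon$ moves each eigenvalue (in the min-max enumeration) by at most $\varepsilon$. Making this localisation rigorous — choosing the energy window around $\lambda_k$, controlling how $R$ couples it to far-away energies, and ruling out the creation or destruction of eigenvalues in the window — is the delicate part; everything else is standard functional analysis and elliptic theory.
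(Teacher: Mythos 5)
Your proposal is correct in substance but takes a genuinely different route from the paper. The paper proves Theorem~\ref{main theorem 4} by introducing the auxiliary elliptic operators $A_j:=P_j^*AP_j-\operatorname{sgn}(j)\sum_{l\ne j}\operatorname{sgn}(l)P_l^*AP_l$, establishing closeness of $\sigma^+(a_j)$ and $\sigma^+(A_j)$ via a quasimode argument (Theorem~\ref{theorem about closeness of spectra aj vs Aj}), and then importing the closeness of $\sigma^+(A_j)$ and $\sigma^+(A)$ from the earlier paper \cite{part2}. You instead compare $\sigma(A)$ with $\sigma(\tilde A)$ directly, using the almost-unitary operator $B$ as the intertwiner. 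Both routes ultimately rest on the same mechanism: if $\psi$ is a normalized eigenfunction of an operator of positive order with eigenvalue $\lambda$, then $\|Q\psi\|=O(\lambda^{-\infty})$ for any smoothing $Q$, because $Q\psi=\lambda^{-n}(Q\,\text{op}^n)\psi$ with $Q\,\text{op}^n$ bounded, after which Weyl's law converts $O(\lambda^{-\infty})$ into $O(k^{-\infty})$. Your version is more self-contained; the paper's detour is deliberate, because the operators $A_j$ carry finer information reused in subsection~\ref{The second Weyl coefficient} and in \cite{part2}.

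One step in your plan deserves a caveat. You claim $B$ can be upgraded to a \emph{genuine} unitary $U$ with $U-B\in\Psi^{-\infty}_m$ by taking the polar part and then adding a finite-rank correction ``on the orthogonal complements of the ranges''. This finite-rank correction only exists if $\dim\ker B=\dim\ker B^*$, i.e.\ $\operatorname{ind}B=0$. The relations $B^*B=\operatorname{Id}_m$ and $BB^*=\operatorname{Id}_m$ modulo $\Psi^{-\infty}_m$ alone do \emph{not} force the Fredholm index of $B$ to vanish (compare the unilateral shift, for which $S^*S=1$ and $SS^*=1$ modulo compacts, yet $\operatorname{ind}S=-1$), and the index is determined by the homotopy class of the unitary-valued principal symbol $(x,\xi)\mapsto\bigl(v^{(m^+)}(x,\xi),\dots,v^{(-m^-)}(x,\xi)\bigr)$, which is not a priori trivial. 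This is, however, not a fatal flaw: the exact unitarity of $U$ is a convenience, not a necessity. One can run the quasimode argument directly with $B$: if $A\psi=\lambda\psi$ with $\|\psi\|=1$, then $\|B^*\psi\|^2=1+O(\lambda^{-\infty})$ and $(\tilde A-\lambda)B^*\psi=O(\lambda^{-\infty})$, giving \eqref{main theorem 4 equation 1}; and if $a_jf=\ell f$ with $\|f\|=1$, then $\|B_jf\|^2=1+O(\ell^{-\infty})$ and $(A-\ell)B_jf=O(\ell^{-\infty})$, giving \eqref{main theorem 4 equation 2}. With this minor rerouting your argument is complete and valid.
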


Let us combine the sequences of eigenvalues \eqref{positive spectrum a_j} for $j=1,\ldots,m^+$ into a single sequence
\begin{equation}
\label{spectra of a_j combined}
0<\zeta_1\le \zeta_2 \le \ldots \le \zeta_k\le \ldots \to +\infty,
\end{equation}
with account of multiplicity.  The sequence \eqref{spectra of a_j combined} coincides, up to a finite number of eigenvalues, with the positive spectrum of the operator \eqref{main theorem 3 equation 2}.

\begin{theorem}
\label{main theorem 5}
For every $\alpha>0$ there exists $z_\alpha\in \mathbb{Z}$ such that
\begin{equation}
\label{main theorem 5 equation 1}
\lambda_k=\zeta_{k+z_\alpha}+O(k^{-\alpha})\quad \text{as}\quad k\to +\infty.
\end{equation}
\end{theorem}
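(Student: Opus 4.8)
The plan is to compare the positive eigenvalues of $A$ with those of the diagonal operator $\tilde A$ from Theorem~\ref{main theorem 3}, and then to read off the statement about the $\zeta_k$. For a first reduction, note that for $j=-1,\dots,-m^-$ the principal symbol of $a_j$ equals $h^{(j)}<0$, so $a_j$ is bounded above modulo a compact operator and has only finitely many positive eigenvalues; hence $\sigma^+(\tilde A)=\bigcup_{j\in J}\sigma^+(a_j)$ agrees with $\{\zeta_k\}$ up to a finite set, and $\zeta_k=\nu_{k+c}$ for all large $k$, where $c\in\mathbb Z_{\ge0}$ is fixed and $\nu_1\le\nu_2\le\cdots$ are the positive eigenvalues of $\tilde A$ with multiplicity. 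It thus suffices to produce, for each $\alpha>0$, an integer $z$ with $\lambda_k=\nu_{k+z}+O(k^{-\alpha})$.

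The heart of the matter is the bound $N^+_A(\mu)-N^+_{\tilde A}(\mu)=O(1)$ as $\mu\to+\infty$, where $N^+_A(\mu)=\#\{k:0<\lambda_k\le\mu\}$ and similarly $N^+_{\tilde A}$. Two facts drive it: first, $AB=B\tilde A+K''$ with $K''\in\Psi^{-\infty}_m$, which follows from \eqref{B diagonalises} and \eqref{B almost unitary}; second, a smoothing operator annihilates high eigenfunctions to infinite order — if $Q$ is elliptic self-adjoint of order $s>0$, $Q\psi=\mu\psi$, $\|\psi\|=1$, then integration by parts with the eigenvalue equation gives $\|S\psi\|_{L^2}=O(\mu^{-\infty})$ for every $S\in\Psi^{-\infty}$, and $|\langle S\psi,\psi'\rangle|=O((\mu\mu')^{-\infty})$ for a second eigenfunction $\psi'$. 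Choosing a smooth $\phi$ that approximates $\mathbf 1_{(\delta,\mu]}$, with $\delta$ in a spectral gap of both operators below $\lambda_1$ and $\nu_1$ and with a transition near $\mu$ sharp enough that $\operatorname{Tr}\phi(A)=N^+_A(\mu)+O(1)$ and likewise for $\tilde A$, the Helffer--Sj\"ostrand formula gives $\phi(A)B-B\phi(\tilde A)=-\frac1\pi\int\bar\partial\tilde\phi(z)\,(A-z)^{-1}K''(\tilde A-z)^{-1}\,L(dz)$; since $K''$ is smoothing the integrand has trace norm $O(\mu^{-\infty})$ over the part of $\operatorname{supp}\bar\partial\tilde\phi$ above $\operatorname{Re}z\approx\mu$ and is a fixed smoothing operator over $\operatorname{Re}z\approx\delta$, so $\phi(A)B-B\phi(\tilde A)$ has bounded trace norm. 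Feeding this into $\phi(A)=B\phi(\tilde A)B^*+\bigl(\phi(A)B-B\phi(\tilde A)\bigr)B^*+\phi(A)(\operatorname{Id}_m-BB^*)$, and bounding $\operatorname{Tr}\bigl(\phi(\tilde A)(B^*B-\operatorname{Id}_m)\bigr)$ and $\operatorname{Tr}\bigl(\phi(A)(\operatorname{Id}_m-BB^*)\bigr)$ via the second fact, one gets $\operatorname{Tr}\phi(A)-\operatorname{Tr}\phi(\tilde A)=O(1)$, i.e.\ $N^+_A(\mu)-N^+_{\tilde A}(\mu)=O(1)$. As $A$ and $\tilde A$ are not semibounded the argument is run on their positive spectral subspaces; that $B$ intertwines these modulo $\Psi^{-\infty}$ is where Theorem~\ref{theorem results from part 1}(b) and the column structure of $B$ enter.

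To conclude, Theorem~\ref{main theorem 4}(a) and the reduction give $\dist(\lambda_k,\{\nu_l\})=O(k^{-\infty})$, hence $\lambda_k=\nu_{r(k)}+O(k^{-\infty})$ for some index $r(k)$, and the counting-function bound forces $r(k)-k$ to stay bounded. Weyl's law gives $\nu_{l+1}-\nu_l=O(l^{s/d-1})$, and both sequences are nondecreasing, so an elementary sorting argument replaces the bounded offset $r(k)-k$ by a single integer $z$ at the cost of an error $O(k^{-\alpha})$; absorbing $c$ then yields \eqref{main theorem 5 equation 1}. (The integer must in general depend on $\alpha$: clusters of eigenvalues coincident only to order $k^{-N}$, $N$ arbitrarily large, can force a reshuffling of the pairing $\lambda_k\leftrightarrow\nu_{r(k)}$ invisible at scale $k^{-\alpha}$.)

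The main difficulty is the counting-function estimate, specifically keeping the Helffer--Sj\"ostrand remainder's trace bounded uniformly in $\mu$: when $s<d$ the cutoff $\phi$ must be as sharp as scale $\mu^{1-d/s}$ near $\mu$ for only finitely many eigenvalues to lie in the transition, and one must check that the smoothing factor $K''$ defeats the resulting growth of the derivatives of $\phi$, which it does thanks to the infinite-order decay of $\|K''\psi\|$ on high eigenfunctions. A cleaner route, conditional on $\operatorname{ind}B=0$ — so that $B$ can be replaced by a genuine unitary $W=B\bmod\Psi^{-\infty}_m$, any finite-rank correction being absorbed into $z$ — is to set $A':=W^*AW$, giving $\sigma(A')=\sigma(A)$ and $A'=\tilde A+K$ with $K\in\Psi^{-\infty}_m$, and then to deform along $Q_t:=\tilde A+tK$: the Hellmann--Feynman identity gives $\bigl|\tfrac{d}{dt}\nu_k(Q_t)\bigr|\le\|K\psi_k(t)\|_{L^2}=O(\nu_k(Q_t)^{-\infty})$ uniformly in $t$, whence $\lambda_k=\nu_k+O(k^{-\infty})$ upon integration.
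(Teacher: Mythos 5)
Your argument bypasses the auxiliary operators $A_j$ of \eqref{definition of Aj} and compares $A$ with $\tilde A$ directly; that is a genuinely different route from the paper's, whose proof is a two-line reduction to Theorem~\ref{theorem about closeness of eigenvalues aj vs Aj} ($a_j$ versus $A_j$) plus the results of [part2] ($A_j$ versus $A$). The content there is an interval count: one builds a partition $\{\nu_n\}$ of $(0,\infty)$ whose intervals shrink like $\nu_n^{-\alpha d/s}$ \emph{and} whose endpoints are kept $\gtrsim n^{-\gamma}$ away from both spectra, then shows via Lemmas~\ref{lemma about bound on number of ev in interval} and~\ref{lemma about orthonormalisation} that the two eigenvalue counts in each interval eventually coincide --- which at once stabilizes the index offset and produces the $O(k^{-\alpha})$ error from the interval width.

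Two of your load-bearing steps are gaps as written. (i) The Helffer--Sj\"ostrand counting bound needs a cutoff $\phi$ whose transition near $\mu$ simultaneously has width $\sim\mu^{1-d/s}$ and contains $O(1)$ eigenvalues of \emph{both} $A$ and $\tilde A$. The second requirement does not hold at an arbitrary $\mu$ (spectral clusters), and securing it uniformly in $\mu$ is precisely the gap-avoidance encoded in the paper's choice of the $\nu_n$; you have not avoided that step, only left it implicit. (ii) The final ``sorting argument'' is not elementary. You pass from $\lambda_k=\nu_{r(k)}+O(k^{-\infty})$, $|r(k)-k|=O(1)$, to $\lambda_k=\nu_{k+z}+O(k^{-\alpha})$ by invoking a gap bound ``$\nu_{l+1}-\nu_l=O(l^{s/d-1})$'', but this quantity does not decay for $s\ge d$ (and even grows for $s>d$), and a pointwise bound of this shape is not a consequence of leading-order Weyl asymptotics anyway. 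What is really needed is to show that the offset $r(k)-k$ eventually stabilizes, with the residual mismatch confined to windows of width $O(k^{-\alpha})$; that is exactly what the paper's identity $N_j(n)=n_j(n)$ delivers, and it does not follow from the counting-function bound plus Theorem~\ref{main theorem 4} alone. Finally, the ``cleaner route'' via Hellmann--Feynman on $Q_t=\tilde A+tK$ is conditional on $\operatorname{ind}B=0$, which is neither established nor automatic (it is obstructed by the same topology alluded to in Remark~\ref{remark topological obstructions}), so it cannot carry the proof. The guiding idea --- $A$ and $\tilde A$ are intertwined modulo $\Psi^{-\infty}_m$, hence their positive spectra agree up to a bounded shift and superpolynomially small errors --- is correct and is the same intuition behind the paper's proof; what is missing is the interval/gap bookkeeping that makes it a theorem.
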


Theorem~\ref{main theorem 4} establishes asymptotic closeness of positive spectra of $A$ and $a_j$, $j=1,\ldots, m^+$, whereas Theorem~\ref{main theorem 5} establishes asymptotic closeness of individual eigenvalues, enumerated in our particular way.  Note that the remainder terms in our two theorems are different. The underlying reason for this is that when proving one-to-one correspondence between eigenvalues in the limit for $k\to+\infty$ we have to take into account the potential presence of \emph{spectral clusters}, i.e.  eigenspaces of large multiplicity. 

Theorems~\ref{main theorem 4} and \ref{main theorem 5},  alongside their counterpart for $j=-1,\ldots,-m^-$, imply a result first proved in \cite{part2}, namely that the spectrum of $A$ partitions, up to an error with superpolynomial decay,  into $m$ distinct series, one for each eigenvalue of $A_\prin$. The advantage of the approach adopted in the current paper is that these series are characterised as the spectra of $m$ \emph{semibounded scalar} (as opposed to non-semibounded matrix) elliptic operators, the operators $a_j$.  In particular, Theorem~\ref{main theorem 5} implies that one can compute the (global) Weyl coefficients of $A$ as the sum of the (global) Weyl coefficients of the $a_j$'s, hence relying only on the much simpler formulae for \emph{scalar} elliptic operators --- see subsection~\ref{The second Weyl coefficient}. 

\begin{remark}
\label{remark generalisations}
Let us briefly discuss possible generalisations of our results. To begin with,  we should point out that the ellipticity of $A$ or the fact that $A$ is of positive order are not really needed in the proofs of Theorems~\ref{main theorem 1} and~\ref{main theorem 2}, or for the construction algorithm contained in subsection~\ref{The algorithm}.  The key assumption is the simplicity of the eigenvalues of $A_\mathrm{prin}$.  Moreover,  our algorithm can be applied, with minimal modifications, to operators acting on more general vector bundles, e.g.,  to operators acting on differential forms.  We do not carry out these generalisations in the current paper because doing so would hinder clarity and readability,  without bringing a corresponding benefit in terms of insight. Finally, we should mention that in the presence of topological obstructions discussed in Remark~\ref{remark topological obstructions} one can still pursue a local (or microlocal) diagonalization of the operator $A$ with the techniques presented in this paper. However, a diagonalization that is only local is of little or no use in spectral-theoretic applications, as will become clear in Section~\ref{The diagonalized operator}. 
\end{remark}

\

The paper is structured as follows. 

Section~\ref{Construction of the operators $B_j$} is devoted to the study of the operators $B_j$ appearing in Theorem~\ref{main theorem 1}.  In subsection~\ref{Proof of Theorem 1} we prove Theorem~\ref{main theorem 1}; in subsection~\ref{The algorithm} we provide a self-contained explicit algorithm for the computation of the full symbol of $B_j$; in subsection~\ref{The subprincipal symbol} we carry out the first iteration of our algorithm and obtain a closed formula for the subprincipal symbol of $B_j$, thus proving Theorem~\ref{main theorem 2}.  We conclude the section by demonstrating,  in subsection~\ref{Recovering pseudodifferential projections from the operators $B_j$},  that the pseudodifferential projection $P_j$ can be recovered from $B_j$.

Section~\ref{The diagonalized operator} is concerned with the spectral analysis of the approximate diagonalization $\tilde A$ of $A$. After briefly discussing in subsection~\ref{The almost-unitary operator $B$} some basic properties of $\tilde A$,  in subsection~\ref{Spectral analysis} we provide our main spectral argument and prove Theorems~\ref{main theorem 4} and~\ref{main theorem 5}.  Finally, in subsection~\ref{The second Weyl coefficient} we discuss the relation between local and global spectral asymptotics of $A$ and $\tilde A$,  for the special case of first order operators.

In Section~\ref{An example: the operator of linear elasticity} we apply our results to an explicit example: the operator of linear elasticity in dimension 2.  %We plan to address applications more extensively elsewhere.

The paper is complemented by an appendix collecting some auxiliary results.

\section{Construction of the operators $B_j$}
\label{Construction of the operators $B_j$}

This section is devoted to the proof of Theorems~\ref{main theorem 1} and~\ref{main theorem 2}.

Throughout this section we will need a refined notation for the principal symbol. Namely, we denote by $(\,\cdot\,)_{\mathrm{prin},r}$ the principal symbol of the expression within brackets, regarded as an operator in $\Psi^{-r}_{k,n}$.  The relevant values of $k$ and $n$ will be clear from the context, so we suppress the dependence on them in the notation of the refined principal symbol.  We will use the refined notation whenever there is risk of confusion.

\subsection{Proof of Theorem~\ref{main theorem 1}}
\label{Proof of Theorem 1}

For the reader's convenience, let us recall that we want to show that for each $j\in J$ there exists a pseudodifferential operator $B_j\in \Psi^0_{1,m}$ such that
\begin{equation*}
\label{main theorem 1 equation 1bis}
(B_j)_\prin=v^{(j)},
\end{equation*}
\begin{equation*}
\label{main theorem 1 equation 2bis}
B_j^* B_j=\mathrm{Id} \mod \Psi^{-\infty}_1,
\end{equation*}
\begin{equation*}
\label{main theorem 1 equation 3bis}
P_j B_j=B_j \mod \Psi^{-\infty}_{1,m}.
\end{equation*}

\begin{proof}
We will prove the theorem by constructing a sequence of pseudodifferential operators $B_{j,k}\in \Psi^0_{1,m}$, $k=0,1,2,\ldots$,  such that
\begin{equation}
\label{proof of theorem 1 equation 1}
B_{j,k+1}-B_{j,k}\in \Psi^{-k-1}_{1,m},
\end{equation}
\begin{equation}
\label{proof of theorem 1 equation 2}
(B_{j,k})_\prin=v^{(j)},
\end{equation}
\begin{equation}
\label{proof of theorem 1 equation 3}
B_{j,k}^*B_{j,k}=\operatorname{Id} \mod \Psi^{-k-1}_{1,m},
\end{equation}
\begin{equation}
\label{proof of theorem 1 equation 4}
P_jB_{j,k}=B_{j,k} \mod \Psi^{-k-1}_{1,m}.
\end{equation}
More precisely,  for each $j$ we choose an arbitrary $B_{j,0}$ satisfying $(B_{j,0})_\prin=v^{(j)}$ and determine recursively pseudodifferential operators $Q_{j,k}\in \Psi^{-k}_{1,m}$, $k=1,2,\ldots$, such that
\begin{equation}
\label{proof of theorem 1 equation 5}
B_{j,k}:=B_{j,k-1}+Q_{j,k}, \qquad k=1,2,\ldots
\end{equation}
satisfy \eqref{proof of theorem 1 equation 3}--\eqref{proof of theorem 1 equation 4}.  Note that the operators $B_{j,k}$ defined in accordance with \eqref{proof of theorem 1 equation 5} automatically satisfy conditions \eqref{proof of theorem 1 equation 1} and \eqref{proof of theorem 1 equation 2}.

Satisfying \eqref{proof of theorem 1 equation 3} and \eqref{proof of theorem 1 equation 4} reduces to solving
\begin{equation*}
\label{proof of theorem 1 equation 6*}
\left((B_{j,k-1}+Q_{j,k})^*(B_{j,k-1}+Q_{j,k})-\operatorname{Id}\right)_{\prin,k}=0\,
\end{equation*}
and
\begin{equation*}
\label{proof of theorem 1 equation 7}
\left(P_j(B_{j,k-1}+Q_{j,k})-B_{j,k-1}-Q_{j,k}\right)_{\prin,k}=0\,,
\end{equation*}
which translate into the following system of equations for $(Q_{j,k})_\prin$:
\begin{subequations}
\begin{numcases}{}
\label{proof of theorem 1 equation 8}
[v^{(j)}]^*(Q_{j,k})_\prin+[(Q_{j,k})_\prin]^*v^{(j)}=r_{j,k}\,,
\\
\label{proof of theorem 1 equation 8bis}
(P^{(j)}-I)(Q_{j,k})_\prin =R_{j,k}\,,
\end{numcases}
\end{subequations}
with
\begin{equation}
\label{proof of theorem 1 equation 9}
r_{j,k}:=(\operatorname{Id}-B_{j,k-1}^*B_{j,k-1})_\mathrm{prin,k}\,,
\qquad
R_{j,k}:=(B_{j,k-1}-P_jB_{j,k-1})_{\prin,k}\,.
\end{equation}

It is not hard to see that the solvability condition $P^{(j)}R_{j,k}=0$ for \eqref{proof of theorem 1 equation 8bis} is satisfied. Indeed, we have
\begin{equation*}
\begin{split}
\label{proof of theorem 1 equation 10}
P^{(j)}R_{j,k}
&
=
(P_j)_{\prin, 0}(B_{j,k-1}-P_jB_{j,k-1})_{\prin,k}
\\
&
=
(P_jB_{j,k-1}-P_jB_{j,k-1})_{\prin,k}
\\
&
=0.
\end{split}
\end{equation*}
In the above calculation we used the fact that the principal symbol map is a homomorphism.

Then
\begin{equation*}
\label{proof of theorem 1 equation 11}
\begin{split}
(Q_{j,k})_\prin
&
=
\left(\frac12 r_{j,k}+ i f_{j,k} \right)v^{(j)}- R_{j,k}
\\
&
=
\left(\frac12 r_{j,k}+ i f_{j,k} \right)v^{(j)}- \sum_{l\in J\setminus \{j\}} \left([v^{(l)}]^*R_{j,k}\right) \, v^{(l)}\,,
\end{split}
\end{equation*}
where $f_{j,k}:T'M \to \mathbb{R}$ is an arbitrary smooth real valued function,  solves \eqref{proof of theorem 1 equation 8}--\eqref{proof of theorem 1 equation 9}.
\end{proof}

\subsection{The algorithm}
\label{The algorithm}

We reformulate the results from the previous subsection in the form of a self-contained algorithm for the construction of the full symbol of the operators $B_j$ and, as a straightforward consequence of \eqref{main theorem 3 equation 1}, of the almost-unitary operator $B$.

\

{\bf Step 1.\ } Given $m$ orthonormal eigenvectors $v^{(j)}$ of $A_\prin$,  choose $m$ arbitrary pseudodifferential operators $B_{j,0}\in\Psi^0_{1,m}$ satisfying $(B_{j,0})_\prin=v^{(j)}$.

\

{\bf Step 2.\ } For $k=1,2,\ldots$ define
\begin{equation}
B_{j,k}:=B_{j,0}+\sum_{n=1}^kQ_{j,n}, \qquad Q_{j,n}\in \Psi^{-n}_{1,m}.
\end{equation}

Assuming we have determined the pseudodifferential operator $B_{j,k-1}$, compute the following quantities:
\begin{enumerate}[(i)]
\item $r_{j,k}:=(\operatorname{Id} -B_{j,k-1}^*B_{j,k-1})_{\prin, k}$,

\item $R_{j,k}:=(B_{j,k-1}-P_jB_{j,k-1})_{\prin,k}$.
\end{enumerate}

\

{\bf Step 3.\ } Choose a pseudodifferential operator $Q_{j,k} \in \Psi^{-k}_{1,m}$ satisfying
\begin{equation}
\label{Qjk prin}
(Q_{j,k})_\prin=\left(\tfrac12 r_{j,k}+i f_{j,k}\right)v^{(j)}-R_{j,k},
\end{equation}
where $f_{j,k}:T'M \to \mathbb{R}$ is an arbitrary smooth real-valued function.

\

{\bf Step 4.\ } Put
\begin{equation}
B_j\sim B_{j,0}+\sum_{k=1}^{+\infty} Q_{j,k}.
\end{equation}
Here $\sim$ stands for asymptotic expansion in smoothness, namely,
\[
B_j- \left(B_{j,0}+\sum_{k=1}^{n} Q_{j,k}\right)\in \Psi^{-n-1}_{1,m}.
\]

\begin{remark}
It transpires from the above algorithm that there are two different sources for the non-uniqueness of $B$:
\begin{enumerate}[(i)]
\item the eigenvectors $v^{(j)}$ are only defined up to a gauge transformation \eqref{gauge transformation};

\item once we have fixed the eigenvectors $v^{(j)}$, for each $B_j$ at every stage of the iterative process we have one real-valued degree of freedom, see \eqref{Qjk prin}.
\end{enumerate}
This agrees with what observed, in a somewhat different setting, by Panati, Spohn and Teufel \cite{PST03}. We should mention that great care is needed when dealing with the gauge transformation \eqref{gauge transformation}: failing to properly account for \eqref{gauge transformation} and the curvature it brings about led to mistakes in spectral-theoretic publications, see \cite[Sec.~11]{CDV}.  See also \cite{nicoll, spin1, sesqui,leonid} for a wider picture on the role of gauge transformations in the analysis of systems of PDEs.

\end{remark}

\subsection{The subprincipal symbol}
\label{The subprincipal symbol}

Let us carry out the first iteration of our algorithm to obtain a formula for the subprincipal symbol of $B_j$, thus proving Theorem~\ref{main theorem 2}.

\

Without loss of generality, let us choose an operator $B_{j,0}\in \Psi^0_{1,m}$ satisfying $(B_{j,0})_\prin=v^{(j)}$ and the additional property
\begin{equation}
\label{extra condition Bj0 sub equal zero}
(B_{j,0})_\sub=0.
\end{equation}

Recall that the subprincipal symbol of the composition of two pseudodifferential operators is given by
\begin{equation}
\label{subprincipal symbol of the product}
(CD)_\sub=C_\prin D_\sub+C_\sub D_\prin+\frac{i}2\{C_\prin, D_\prin\},
\end{equation}
see \cite[Eqn~(1.4)]{DuGu} where the opposite convention for the sign of the Poisson brackets is adopted.

On account of \eqref{extra condition Bj0 sub equal zero}, \eqref{subprincipal symbol of the product}, \eqref{main theorem 1 equation 1} and the fact that $\operatorname{Id}_\sub=0$, we have
\begin{equation}
\label{computing subprincipal symbol Bj 1}
\begin{split}
r_{j,1}
&
=
(\operatorname{Id} -B_{j,0}^*B_{j,0})_\sub
\\
&
=
-\left([v^{(j)}]^*(B_{j,0})_\sub+[(B_{j,0})_\sub]^*v^{(j)}+\frac{i}2\{[v^{(j)}]^*,v^{(j)}\}\right)
\\
&
=
\frac12 \operatorname{tr}((P_j)_\sub).
\end{split}
\end{equation}
In the last step of the above calculation we resorted to the identity
\begin{equation*}
\{[v^{(j)}]^*,v^{(j)}\}=i\operatorname{tr}((P_j)_\sub),
\end{equation*}
which is obtained by taking the trace in \cite[Theorem~2.3]{part1} or by combining \cite[Eqn.~(1.20)]{CDV} with \cite[Corollary~4.2]{part2}. 

Similarly,  in view of \eqref{extra condition Bj0 sub equal zero} and \eqref{subprincipal symbol of the product}
we get
\begin{equation}
\label{computing subprincipal symbol Bj 2}
\begin{split}
R_{j,1}
&
=
(B_{j,0}-P_jB_{j,0})_\sub  
\\
&
=
-\left(P^{(j)}(B_{j,0})_\sub+(P_j)_\sub v^{(j)}+\frac{i}2\{P^{(j)},v^{(j)}\}\right)
\\
&
=
-\left((P_j)_\sub v^{(j)}+\frac{i}2\{P^{(j)},v^{(j)}\}\right).
\end{split}
\end{equation}

Substituting \eqref{computing subprincipal symbol Bj 1} and \eqref{computing subprincipal symbol Bj 2} into \eqref{Qjk prin} we arrive at \eqref{main theorem 2 equation 1}.

\begin{remark}
It is not hard to check directly from \eqref{computing subprincipal symbol Bj 2} that $[v^{(j)}]^*R_{j,1}=0$.  Indeed, \cite[Theorem~2.3]{part1} tells us that
\begin{equation}
\label{remark on checking solvability condition 1}
[v^{(j)}]^*(P_j)_\sub v^{(j)}=-\frac{i}2 [v^{(j)}]^*\{P^{(j)},P^{(j)}\} v^{(j)},
\end{equation}
whereas a direct calculation exploiting properties of Poisson brackets gives us
\begin{equation*}
\label{remark on checking solvability condition 2}
\begin{split}
\frac{i}2[v^{(j)}]^*\{P^{(j)},v^{(j)}\}
&
=
\frac{i}2[v^{(j)}]^*\{P^{(j)},P^{(j)}v^{(j)}\}\\
&
=\frac{i}2[v^{(j)}]^*\left(\{P^{(j)},P^{(j)},v^{(j)}\}+\{P^{(j)},P^{(j)}\}v^{(j)}\right)
\\
&
=\frac{i}2[v^{(j)}]^*\left(\{(P^{(j)})^2,v^{(j)}\}-P^{(j)}\{P^{(j)},v^{(j)}\}+\{P^{(j)},P^{(j)}\}v^{(j)}\right)
\\
&
=
\frac{i}2[v^{(j)}]^*\{P^{(j)},P^{(j)}\}v^{(j)},
\end{split}
\end{equation*}
which is the same as \eqref{remark on checking solvability condition 1}, but with opposite sign.
\end{remark}

\subsection{Recovering $P_j$ from $B_j$}
\label{Recovering pseudodifferential projections from the operators $B_j$}

%\subsection{A simple property of the operators $B_j$}

Before moving on to the spectral analysis,  let us prove an identity relating the operators $B_j$ and pseudodifferential projections $P_j$, which will be quite useful later on.

\begin{proposition}
\label{proposition on Bj vs Pj}
We have
\begin{equation}
\label{proposition on Bj vs Pj equation 1}
B_j B_j^*=P_j \mod \Psi^{-\infty}_m, \qquad j\in J,
\end{equation}
and
\begin{equation}
\label{proposition on Bj vs Pj equation 2}
\sum_{j\in J} B_j B_j^*=\operatorname{Id}_m \mod \Psi^{-\infty}_m\,.
\end{equation}
\end{proposition}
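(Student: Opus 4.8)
The plan is to establish \eqref{proposition on Bj vs Pj equation 1} first and then derive \eqref{proposition on Bj vs Pj equation 2} as an easy consequence using Theorem~\ref{theorem results from part 1}(a)(iv). For \eqref{proposition on Bj vs Pj equation 1}, I would set $T_j:=B_jB_j^*\in\Psi^0_m$ and show that $T_j$ satisfies the defining properties that characterise $P_j$ uniquely modulo $\Psi^{-\infty}_m$; more precisely, since $P_j$ is the unique pseudodifferential projection with principal symbol $P^{(j)}$ commuting with $A$ modulo smoothing (this uniqueness is the content of the last sentence of Theorem~\ref{theorem results from part 1}(a)), it suffices to verify that $T_j$ is (i) self-adjoint modulo $\Psi^{-\infty}_m$, (ii) idempotent modulo $\Psi^{-\infty}_m$, (iii) has principal symbol $P^{(j)}$, and (iv) commutes with $A$ modulo $\Psi^{-\infty}_m$. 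Property (i) is immediate from $(B_jB_j^*)^*=B_jB_j^*$. For (ii), compute $T_j^2=B_j(B_j^*B_j)B_j^*=B_j(\operatorname{Id}+\Psi^{-\infty}_1)B_j^*=B_jB_j^*\bmod\Psi^{-\infty}_m$ using \eqref{main theorem 1 equation 2}. For (iii), the principal symbol of $B_jB_j^*$ is $v^{(j)}[v^{(j)}]^*=P^{(j)}$ by \eqref{main theorem 1 equation 1} and the homomorphism property of the principal symbol map.

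The one genuinely substantive point is (iv), commutation with $A$, and equivalently the fact that $T_j=P_jB_jB_j^*P_j\bmod\Psi^{-\infty}_m$, i.e.\ that $T_j$ really is supported on the $j$-th invariant subspace. I would first promote \eqref{main theorem 1 equation 3} to its adjoint form $B_j^*P_j=B_j^*\bmod\Psi^{-\infty}_{m,1}$ (using $P_j=P_j^*\bmod\Psi^{-\infty}_m$ from Theorem~\ref{theorem results from part 1}(a)(ii)), so that $T_j=B_jB_j^*=P_jB_jB_j^*P_j=P_jT_jP_j\bmod\Psi^{-\infty}_m$. Commutation with $A$ then follows: write $A=\sum_{l\in J}P_l^*AP_l\bmod\Psi^{-\infty}_m$ via parts (iii)--(v) of Theorem~\ref{theorem results from part 1}(a) (one has $P_j$ commuting with $A$, hence $AP_j=P_jAP_j\bmod\Psi^{-\infty}_m$, and similarly on the left), and observe that $AT_j=AP_jT_j=P_jAT_j$ while $T_jA=T_jP_jA=T_jAP_j$, all modulo $\Psi^{-\infty}_m$; since $P_lP_j=\delta_{lj}P_j$ modulo smoothing, only the $l=j$ term survives on each side, giving $AT_j=P_jAP_jT_j=T_jP_jAP_j=T_jA\bmod\Psi^{-\infty}_m$, where the middle equality uses that $T_j$ and $P_jAP_j$ are both sandwiched by $P_j$ and that $B_j^*(P_jAP_j)B_j=B_j^*AB_j=a_j\operatorname{Id}$ acts as a scalar so it commutes with $B_j^*B_j$ — unwinding this identity is the routine-but-careful part. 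Invoking uniqueness in Theorem~\ref{theorem results from part 1}(a) then yields $T_j=P_j\bmod\Psi^{-\infty}_m$.

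For \eqref{proposition on Bj vs Pj equation 2}, simply sum \eqref{proposition on Bj vs Pj equation 1} over $j\in J$ and apply Theorem~\ref{theorem results from part 1}(a)(iv): $\sum_{j\in J}B_jB_j^*=\sum_{j\in J}P_j=\operatorname{Id}_m\bmod\Psi^{-\infty}_m$. I expect the main obstacle to be organising the commutation argument in step (iv) cleanly — in particular being careful that all the projection identities from Theorem~\ref{theorem results from part 1} are only valid modulo $\Psi^{-\infty}_m$ and that composing finitely many such identities preserves this (which it does, since $\Psi^{-\infty}_m$ is a two-sided ideal). An alternative, possibly shorter route for \eqref{proposition on Bj vs Pj equation 1} would be to verify the identity at the level of full symbols using the algorithm of subsection~\ref{The algorithm}, but the invariant argument above seems cleaner and avoids symbol bookkeeping.
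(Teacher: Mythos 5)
Your strategy --- show that $T_j:=B_jB_j^*$ satisfies the axioms characterising $P_j$ and invoke the uniqueness clause of Theorem~\ref{theorem results from part 1}(a) --- is genuinely different from the paper's, which instead sets $S_j:=B_jB_j^*-P_j$, shows that $S_j$ is killed on the right by operators with invertible principal symbol (modulo smoothing), and concludes $(S_j)_{\prin,k}=0$ at every order $k$ by contradiction. Parts (i)--(iii) of your verification are fine. The problem is step (iv), which is exactly where the real work of the proposition lives, and which as written is circular.

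To see the circularity: you reduce to showing $P_jAP_j\,T_j=T_j\,P_jAP_j$. Using $P_jB_j=B_j$ and $B_j^*P_j=B_j^*$ this becomes $P_jAB_jB_j^*=B_jB_j^*AP_j$, and using $[A,P_j]=0\bmod\Psi^{-\infty}_m$ this in turn becomes $AB_jB_j^*=B_jB_j^*A$, i.e.\ precisely $[A,T_j]=0$, the thing you set out to prove. The observation that $B_j^*(P_jAP_j)B_j=a_j$ ``commutes with $B_j^*B_j$'' carries no information, since $B_j^*B_j=\operatorname{Id}$ commutes with everything; it does not transfer back to a statement about $P_jAP_j$ and $B_jB_j^*$ without further input. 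The missing input is the rank-one structure of $P^{(j)}$: writing $C:=[P_jAP_j,T_j]$, one checks $P_jCP_j=C\bmod\Psi^{-\infty}_m$ and $B_j^*CB_j=a_j-a_j=0\bmod\Psi^{-\infty}_1$, whence at each order $(C)_{\prin,k}=P^{(j)}(C)_{\prin,k}P^{(j)}=\bigl([v^{(j)}]^*(C)_{\prin,k}v^{(j)}\bigr)P^{(j)}=0$. But notice that this is not ``routine unwinding'' --- it \emph{is} the heart of the proposition, and once you have it you might as well apply it directly to $S_j:=T_j-P_j$ (noting $S_jB_l=0\bmod\Psi^{-\infty}$ for every $l\in J$, hence $(S_j)_{\prin,k}v^{(l)}=0$ for all $l$, hence $(S_j)_{\prin,k}=0$) and dispense with the axiom-check-plus-uniqueness detour entirely, which is what the paper does. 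A secondary issue: the uniqueness in Theorem~\ref{theorem results from part 1}(a) is stated for the whole family satisfying (i)--(v), including $\sum_jP_j=\operatorname{Id}_m$; you would have to verify $\sum_jT_j=\operatorname{Id}_m$ as part of the uniqueness argument, but you only derive it afterwards from \eqref{proposition on Bj vs Pj equation 1}.
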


\begin{proof}
Let us define
\begin{equation*}
\label{proof proposition on Bj vs Pj equation 1}
S_j:=B_j B_j^*-P_j.
\end{equation*}
Arguing by contradiction, suppose that
\begin{equation*}
\label{proof proposition on Bj vs Pj equation 2}
S_j\in \Psi^{-k}_m \quad \text{but}\quad S_j\not\in \Psi^{-k-1}_m.
\end{equation*}
This means that
\begin{equation}
\label{proof proposition on Bj vs Pj equation 3}
(S_j)_{\prin,k}(x,\xi)\ne 0 \quad \text{for some}\quad (x,\xi)\in T'M.
\end{equation}

Now,  formula \eqref{main theorem 1 equation 3} and Theorem~\ref{theorem results from part 1}(a) imply
\begin{equation*}
\label{proof proposition on Bj vs Pj equation 4}
S_j \sum_{l\in J}P_l =0 \mod \Psi^{-\infty}_m.
\end{equation*}
The latter, in particular, implies
\begin{equation}
\label{proof proposition on Bj vs Pj equation 5}
\left(S_j \sum_{l\in J}P_l\right)_{\prin,k}=0.
\end{equation}
But
\begin{equation}
\label{proof proposition on Bj vs Pj equation 6}
\left(S_j \sum_{l\in J}P_l\right)_{\prin,k}=(S_j)_{\prin,k}\left(\sum_{l\in J}P_l\right)_{\prin,0}=(S_j)_{\prin,k}.
\end{equation}
Formulae \eqref{proof proposition on Bj vs Pj equation 5} and \eqref{proof proposition on Bj vs Pj equation 6} contradict \eqref{proof proposition on Bj vs Pj equation 3}, so we have proved \eqref{proposition on Bj vs Pj equation 1}.

Formula \eqref{proposition on Bj vs Pj equation 2} now follows from \eqref{proposition on Bj vs Pj equation 1} and Theorem~\ref{theorem results from part 1}.
\end{proof}

\section{The diagonalized operator}
\label{The diagonalized operator}

\subsection{Action of the almost-unitary operator $B$}
\label{The almost-unitary operator $B$}

Define the matrix operator
\begin{equation}
\label{definition of B as matrix operator}
B:=
\begin{pmatrix}
B_{m^+}, \ldots, B_1, B_{-1}, \ldots, B_{m^{-1}}
\end{pmatrix}.
\end{equation}
Of course, $B\in \Psi^{0}_m$ and its action on $L^2(M)$ is given by \eqref{main theorem 3 equation 1}. 
Let us label rows and columns of matrix operators from $\Psi^r_m$, $r\in \mathbb{R}$, by means of the elements of $J$,  listed in decreasing order from $m^+$ to $m^-$, so that matrix elements of $B$ read
\begin{equation*}
\label{matrix elements of B}
(B)_{pq}=(B_q)_p, \qquad p,q\in J.
\end{equation*}
It is easy to see that the adjoint $B^*\in \Psi^0_m$ of $B$ is given by
\begin{equation}
\label{B^*}
B^*: f \mapsto 
\begin{pmatrix}
B_{m^+}^*f
\\
\vdots
\\
B_1^*f
\\
B_{-1}^*f
\\
\vdots
\\
B_{-m^{-}}^{*}f
\end{pmatrix}, \qquad (B^*f)_p=B_p^*f.
\end{equation}

 Formulae \eqref{main theorem 1 equation 2}, \eqref{main theorem 1 equation 3} and Theorem~\ref{theorem results from part 1} immediately imply
\begin{equation}
\label{B*B}
B^*B=\operatorname{Id}_m \mod \Psi^{-\infty}_m.
\end{equation}
Proposition~\ref{proposition on Bj vs Pj} and Theorem~\ref{theorem results from part 1} imply
\begin{equation}
\label{BB^*}
BB^*=\operatorname{Id}_m \mod \Psi^{-\infty}_m.
\end{equation}
In view of \eqref{B*B} and \eqref{BB^*} we conclude that $B$ is almost-unitary, up to an infinitely smoothing operator.

Formulae \eqref{main theorem 1 equation 2} and \eqref{main theorem 1 equation 3}, combined with Theorem~\ref{theorem results from part 1}, yield
\begin{equation*}
\label{B*AB is diagonal}
\begin{split}
(B^*AB)_{lj}
&
=
B_l^*AB_j=B_l^*P_lAP_jB_j\mod \Psi^{-\infty}_1=B_l^*AP_lP_jB_j\mod\Psi^{-\infty}_1
\\
&
=
\delta_{lj} B_j^*AB_j \mod \Psi^{-\infty}_1,
\end{split}
\end{equation*}
namely, $\tilde A:=B^*AB$ is a diagonal operator modulo $\Psi^{-\infty}_m$. The operators appearing on the diagonal are elliptic self-adjoint pseudodifferential operators of order $s$ from $\mathcal{H}^s(M)$ to $\mathcal{L}^2(M)$ given by
\begin{equation}
\label{operators aj}
a_j:=B_j^*AB_j, \qquad (a_j)_\prin=h^{(j)}, \qquad j\in J.
\end{equation}

Hence,  we have obtained Theorem~\ref{main theorem 3}.

\begin{remark}
Observe that one can easily compute the subprincipal symbol of $\tilde A$ via \eqref{operators aj}, \eqref{main theorem 2 equation 1} and the identity
\begin{multline}
\label{subprincipal symbol of composition of three operators}
(PQR)_\sub=P_\sub Q_\prin R_\prin+P_\prin Q_\sub R_\prin+P_\prin Q_\prin R_\sub
\\
+\frac{i}2\left( \{P,Q\}R+\{P,Q,R\}+P\{Q,R\} \right).
\end{multline}
In fact, \eqref{operators aj} yields, on account of subsection~\ref{The algorithm},  an explicit algorithm for the computation of the full symbol of $\tilde A$.
\end{remark}

\subsection{Spectral analysis}
\label{Spectral analysis}

In order to prove Theorem~\ref{main theorem 4} and Theorem~\ref{main theorem 5}, we will rely on a strategy developed by the author and Vassiliev in \cite{part2}.  Throughout this section, all estimates are to be understood as asymptotic estimates as $k\to+\infty$, unless otherwise stated.

Let us introduce the auxiliary operators
\begin{equation}
\label{definition of Aj}
A_j:=P_j^*AP_j-\operatorname{sgn}(j)\sum_{l\ne j}\operatorname{sgn}(l) P_l^* AP_l, \qquad j\in J.
\end{equation}
Here and further on $\operatorname{sgn}$ returns the sign the nonzero integer number it acts upon. The operators $A_j\in \Psi^{s}_m$ are elliptic and coincide,  for $j>0$ and modulo $\Psi^{-\infty}_m$, with the operators $A_j$ introduced in \cite[Sec.~2]{part2}.  Observe that $(A_j)_\prin$ has precisely one positive (resp.~negative) eigenvalue for positive (resp.~negative) $j$ --- the eigenvalue $h^{(j)}$.

Conjugating $A_j$ by $B$ we get
\begin{equation*}
\label{conjugating Aj by U matrix elements}
(B^*A_jB)_{lr}
=
\delta_{lr} \delta_{lj} \,a_j -\operatorname{sgn}(j\cdot l) \delta_{lr} (1-\delta_{lj})\,a_l \mod \Psi^{-\infty}_1,
\end{equation*}
\begin{equation}
\label{conjugating Aj by U full matrix}
B^*A_jB=\begin{cases}
\begin{pmatrix}
-a_{m^+} & & & &\\
& a_j & & &\\
& & -a_{1}& && \\
& & &a_{-1}& & \\
& & & & \ddots &\\
& & & & & a_{m^-}
\end{pmatrix} \mod \Psi^{-\infty}_m&\ j>0,\\
\\
\begin{pmatrix}
a_{m^+} & & & &\\
& \ddots& & &\\
& & a_{1}& && \\
& & & -a_{-1}& & \\
& & & & a_j &\\
& & & & & -a_{m^-}
\end{pmatrix} \mod \Psi^{-\infty}_m &\ j<0.
\end{cases}
\end{equation}

Suppose $j>0$. Then \eqref{conjugating Aj by U full matrix} suggests that the positive spectrum of $A_j$ is asymptotically described by the spectrum of $a_j$ and {\it vice versa}. In the remainder of this section we will provide a rigorous proof for this,  accounting for the fact that all the above identities are only modulo smoothing operators. The latter will result in the appearance of remainder terms.

\

Our strategy goes as follows.  First, we relate the spectra of $a_j$, $j\in J$, with the spectra of $A_j$, $j\in J$.  Then, we exploit the results on the relation between the spectra of $A_j$, $j\in J$, and the spectrum of $A$ obtained in \cite{part2} to establish a relation between the spectra of $a_j$, $j\in J$, and the spectrum of $A$.

\begin{remark}
We should like to point out that one can prove Theorems~\ref{main theorem 4} and~\ref{main theorem 5} directly, without the need to introduce the operators $A_j$ and rely on results from \cite{part2}. As this would make the proof substantially longer without bringing about an equivalent benefit in terms of insight, we decided against going down this path. Moreover, the operators $A_j$ are, generally speaking, more suitable to study refined spectral properties of $A$ --- see subsection~\ref{The second Weyl coefficient}; relating the spectra of $A_j$ and $a_j$ contributes some additional information which may be useful in future works.
\end{remark}

Without loss of generality, we will prove our results for $j>0$. The case $j<0$ is similar,  and the corresponding proof follows from the case $j>0$ by replacing $A$ with $-A$.

Let
\begin{equation}
\label{positive spectrum Aj}
0<\lambda^{(j)}_1 \le \lambda^{(j)}_2 \le \ldots\le \lambda^{(j)}_k\le \ldots \to +\infty
\end{equation}
be the positive eigenvalues of $A_j$, enumerated with account of multiplicity.

\begin{notation}
When we write
\[
f=O(k^{-\infty}),
\]
the estimate is to be understood as follows: any given partial derivative of the quantity on the LHS is estimated by any given negative power of $k$ uniformly over $M$.
\end{notation}

\begin{theorem}
\label{theorem about closeness of spectra aj vs Aj}
For $j\in J$, $j>0$, we have
\begin{enumerate}[(i)]
\item 
$\operatorname{dist}(\ell_k^{(j)},\sigma^+(A_j))=O(k^{-\infty})\quad\text{as}\quad k\to+\infty$,

\item
$\operatorname{dist}(\lambda_k^{(j)},\sigma^+(a_j))=O(k^{-\infty})\quad\text{as}\quad k\to+\infty$,
\end{enumerate}
where $\ell_k^{(j)}$ and $\lambda_k^{(j)}$ are defined in accordance with \eqref{positive spectrum a_j} and \eqref{positive spectrum Aj}, respectively.
\end{theorem}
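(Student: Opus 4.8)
The plan is to prove both statements simultaneously by exploiting the almost-unitarity of $B$ and the explicit block structure \eqref{conjugating Aj by U full matrix}, reducing everything to an abstract lemma about self-adjoint operators whose difference is infinitely smoothing. First I would observe that \eqref{conjugating Aj by U full matrix} can be rewritten, for $j>0$, as $B^*A_jB=\operatorname{diag}(\ldots)+T_j$ where the diagonal part has positive spectrum consisting precisely of $\sigma^+(a_j)$ together with finitely many ``extra'' values coming from the negative-order tails of the $-a_l$'s ($l\ne j$), and $T_j\in\Psi^{-\infty}_m$ is self-adjoint. Since $B^*B=\operatorname{Id}_m\bmod\Psi^{-\infty}_m$ and $BB^*=\operatorname{Id}_m\bmod\Psi^{-\infty}_m$ (Proposition~\ref{proposition on Bj vs Pj} and \eqref{B*B}), the operator $B$ is a Fredholm-type almost-isometry: conjugation by $B$ changes the spectrum of $A_j$ only by an infinitely smoothing perturbation. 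Concretely, $A_j$ and $BB^*A_jBB^*$ differ by an operator in $\Psi^{-\infty}_m$, and $BB^*A_jBB^*$ is unitarily equivalent — modulo smoothing — to $B^*A_jB$ acting on (the closure of) the range of $B^*$. The upshot is that $A_j$ and the direct sum $a_j\oplus(\text{finitely many }{-a_l})$ have the \emph{same} essential structure of positive eigenvalues up to an operator that is bounded from $L^2$ to every $H^N$.

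The key analytic input I would isolate as a lemma: if $Q_1,Q_2$ are lower-semibounded (on their positive part, elliptic of order $s>0$) self-adjoint operators with discrete spectrum accumulating only at $\pm\infty$, and $Q_1-Q_2\in\Psi^{-\infty}_m$, then enumerating their positive eigenvalues $\mu_k^{(1)},\mu_k^{(2)}$ with multiplicity one has $\operatorname{dist}(\mu_k^{(1)},\sigma(Q_2))=O(k^{-\infty})$ and symmetrically. This is proved by a resolvent/min-max argument: for a smoothing perturbation $R$ of order $-\infty$, the quadratic form $\langle Ru,u\rangle$ is controlled by $\|u\|_{H^{-N}}^2$ for every $N$, and on the eigenspace of $Q_2$ corresponding to eigenvalues $\le\mu$ one has the bound $\|u\|_{H^{-N}}\lesssim \mu^{-N/s}\|u\|_{L^2}$ by ellipticity; since $\mu_k^{(2)}\sim c\,k^{s/d}\to\infty$ by Weyl's law, this forces the perturbation of each high eigenvalue to be $O(k^{-\infty})$. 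I would apply this lemma with $(Q_1,Q_2)=(A_j,\,BB^*A_jBB^*)$ and then with $(Q_1,Q_2)$ relating $BB^*A_jBB^*$ to the block-diagonal operator; the finitely many spurious eigenvalues contributed by the $-a_l$ tails do not affect the distance statements since those blocks are negative-definite modulo finite rank, hence contribute only finitely many positive eigenvalues, which can be absorbed into the ``$O(k^{-\infty})$ for $k$ large'' conclusion.

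Assembling the pieces: for (ii), each positive eigenvalue $\lambda_k^{(j)}$ of $A_j$ is, by the lemma, within $O(k^{-\infty})$ of $\sigma(B^*A_jB)$, and the positive part of the latter is $\sigma^+(a_j)$ up to a finite set; since $\lambda_k^{(j)}\to+\infty$, for $k$ large the nearest spectral point must lie in $\sigma^+(a_j)$, giving $\operatorname{dist}(\lambda_k^{(j)},\sigma^+(a_j))=O(k^{-\infty})$. For (i), the argument is the mirror image: conjugate back by $B^*$ (using $BB^*=\operatorname{Id}_m\bmod\Psi^{-\infty}_m$), noting that $a_j$ sits as one diagonal block of $B^*A_jB$, so its positive spectrum embeds — modulo smoothing — into $\sigma(A_j)$, and since $\ell_k^{(j)}\to+\infty$ the nearest point of $\sigma(A_j)$ is eventually positive. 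I expect the main obstacle to be bookkeeping the ``finitely many extra eigenvalues'' cleanly — i.e.\ making rigorous that the negative blocks $-a_l$ in \eqref{conjugating Aj by U full matrix} contribute only finitely many positive eigenvalues (this follows because $-a_l$ is elliptic with principal symbol $-h^{(l)}<0$, hence bounded above, hence has finite-dimensional positive spectral subspace) and that conjugation by a merely \emph{almost}-unitary $B$ (rather than a genuine unitary) does not corrupt these finiteness/asymptotic statements. Once the abstract lemma above is in place, this reduces to a routine but careful application of min-max on the relevant spectral subspaces.
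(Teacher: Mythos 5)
Your overall strategy is in the same spirit as the paper's — transfer approximate eigenfunctions between $A_j$ and $a_j$ and exploit the fact that smoothing operators annihilate high-frequency eigenfunctions up to $O(k^{-\infty})$ — but there is a genuine gap in the middle step, and the detour through the full matrix operator $B$ creates complications the paper avoids.

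The gap is in the chain $A_j \rightsquigarrow BB^*A_jBB^* \rightsquigarrow \tilde A_j$. Your abstract lemma concerns operators $Q_1,Q_2$ with $Q_1-Q_2\in\Psi^{-\infty}_m$. It applies correctly to the first link: since $BB^*=\operatorname{Id}_m+R$ with $R\in\Psi^{-\infty}_m$, indeed $A_j-BB^*A_jBB^*\in\Psi^{-\infty}_m$. But it does \emph{not} apply to the second link. We have $BB^*A_jBB^*=B(B^*A_jB)B^*=B(\tilde A_j+R')B^*$ with $R'\in\Psi^{-\infty}_m$, and the difference $B(\tilde A_j+R')B^*-\tilde A_j$ is \emph{not} smoothing — it is a conjugation by a pseudodifferential operator of order zero, not a perturbation by $\Psi^{-\infty}_m$. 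What you need there is a separate statement: if $T$ is self-adjoint elliptic of positive order and $B$ is almost-unitary, then $\sigma^+(T)$ and $\sigma^+(BTB^*)$ are $O(k^{-\infty})$-close. This is true, and is proved by exactly the kind of quasimode computation you would want (if $Tf=\mu f$ with $\|f\|=1$ and $\mu$ large, then $BTB^*(Bf)=\mu Bf+B T(B^*B-\operatorname{Id})f=\mu Bf+O(\mu^{-\infty})$, and $\|Bf\|^2=1+O(\mu^{-\infty})$, with a mirror argument using $B^*$), but it is \emph{not} an instance of the $\Psi^{-\infty}$ lemma, and your phrase ``unitarily equivalent modulo smoothing'' does not supply it. Until this link is proved separately, the chain is broken. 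A secondary issue: the sketch you give for the $\Psi^{-\infty}$ lemma (bounding $\|u\|_{H^{-N}}$ on the spectral subspace of eigenvalues $\le\mu$) does not work as written, since eigenfunctions with \emph{bounded} eigenvalue have no $H^{-N}$ decay; the correct route is the direct one — take a normalized eigenfunction $u_k$ of $Q_1$ with large eigenvalue $\mu_k^{(1)}$ and observe $(Q_2-\mu_k^{(1)})u_k=-(Q_1-Q_2)u_k=O(k^{-\infty})$.

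For comparison, the paper sidesteps both issues by never introducing $\tilde A_j$ or the full $B$ in this proof. It uses the single column operator $B_j\in\Psi^0_{1,m}$ directly: given a normalized eigenfunction $f_k^{(j)}$ of the \emph{scalar} operator $a_j$, the $m$-column $B_jf_k^{(j)}$ is an approximate eigenfunction of $A_j$ with the same eigenvalue and near-unit norm, because $B_j^*B_j=\operatorname{Id}\bmod\Psi^{-\infty}_1$, $P_jB_j=B_j\bmod\Psi^{-\infty}_{1,m}$, $B_jB_j^*=P_j\bmod\Psi^{-\infty}_m$, and $P_lA_jP_j\in\Psi^{-\infty}_m$ for $l\neq j$; the converse direction takes an eigenfunction $v_k^{(j)}$ of $A_j$, uses the already-established fact $P_lv_k^{(j)}=O(k^{-\infty})$ for $l\neq j$, and feeds $B_j^*v_k^{(j)}$ into $a_j$. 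This direct transfer between the scalar and vector spaces means there are never any spurious positive eigenvalues to account for, and no conjugation-by-$B$ step to justify. If you want to keep your route, you should isolate and prove the almost-unitary conjugation lemma explicitly; the cleaner fix is to work with $B_j$ as the paper does.
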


\begin{proof}
(i) Let $f^{(j)}_k$ be a normalised eigenfunction of $a_j$ corresponding to the eigenvalue $\ell^{(j)}_k$,
\begin{equation}
\label{proof theorem closeness of spectra j version equation 1}
a_j f^{(j)}_k=\ell^{(j)}_k f^{(j)}_k, \qquad \|f^{(j)}_k\|_{\mathcal{L}^2}=1.
\end{equation}
For every $r=1,2,\ldots$ and every $Q\in \Psi^{-\infty}_{1,r}$,  we have
\begin{equation}
\label{proof theorem closeness of spectra j version equation 2}
Q f^{(j)}_k=O(k^{-\infty}).
\end{equation}
Formula \eqref{proof theorem closeness of spectra j version equation 2} follows from Weyl's Law \cite[Theorem~B.1]{part2}
\begin{equation*}
\label{proof theorem closeness of spectra j version equation 3}
\ell^{(j)}_k= c \,k^{s/d}+ o(k^{s/d})
\end{equation*}
(recall that $A$ is an operator of order $s$) and the identity
\begin{equation*}
\label{proof theorem closeness of spectra j version equation 4}
Q f^{(j)}_k=(\ell^{(j)}_k)^{-n} Q (a_j)^n f^{(j)}_k \qquad \forall n\in \mathbb{N}.
\end{equation*}
Put $v^{(j)}_k:=B_j f^{(j)}_k$. We claim that
\begin{equation}
\label{proof theorem closeness of spectra j version equation 5}
A_j v^{(j)}_k=\ell^{(j)}_k v^{(j)}_k+O(k^{-\infty}).
\end{equation}
Indeed,  taking into account \eqref{definition of Aj}, \eqref{proof theorem closeness of spectra j version equation 2} and \eqref{main theorem 1 equation 3}, for $l\in J\setminus\{j\}$ we have
\begin{equation*}
\label{proof theorem closeness of spectra j version equation 6}
\begin{split}
B_l^*A_j v^{(j)}_k
&
=
B_l^*P_l^*A_j P_j v^{(j)}_k+O(k^{-\infty})
=
O(k^{-\infty}).
\end{split}
\end{equation*}
Hence,  \eqref{proposition on Bj vs Pj equation 2} implies
\begin{equation*}
\label{proof theorem closeness of spectra j version equation 7}
A_j v^{(j)}_k=B_jB_j^*A_j v^{(j)}_k+O(k^{-\infty}),
\end{equation*}
which, combined with \eqref{proof theorem closeness of spectra j version equation 1}, \eqref{proof theorem closeness of spectra j version equation 2} and the definition of $v^{(j)}_k$,  yields \eqref{proof theorem closeness of spectra j version equation 5}.

Formulae \eqref{proof theorem closeness of spectra j version equation 5}, \eqref{proof theorem closeness of spectra j version equation 1} and the estimate
\begin{equation*}
\label{proof theorem closeness of spectra j version equation 8}
\|v^{(j)}_k\|_{L^2}^2=\langle B_j f^{(j)}_k, B_j f^{(j)}_k \rangle=\prec\! B_j^* B_j f^{(j)}_k, f^{(j)}_k \!\succ =\|f^{(j)}_k\|_{\mathcal{L}^2}^2+O(k^{-\infty})
\end{equation*}
(recall \eqref{main theorem 1 equation 2}) give us (i).

\

(ii) Let $v^{(j)}_k$ be a normalised eigenfunction of $A_j$ corresponding to the eigenvalue $\lambda^{(j)}_k$,
\begin{equation}
\label{proof theorem closeness of spectra j version equation 9}
A_j v^{(j)}_k=\lambda^{(j)}_k v^{(j)}_k, \qquad \|v^{(j)}_k\|_{L^2}=1.
\end{equation}
Arguing as in part (i), one obtains that, for every $r=1,2,\ldots$ and every $Q\in \Psi^{-\infty}_{m,r}$,
\begin{equation}
\label{proof theorem closeness of spectra j version equation 11}
Qv^{(j)}_k=O(k^{-\infty}).
\end{equation}
It was shown in \cite[Proposition~3.1]{part2} that
\begin{equation*}
\label{proof theorem closeness of spectra j version equation 12}
P_lv^{(j)}_k=O(k^{-\infty})\quad\text{for}\quad l\in J\setminus\{j\}.
\end{equation*}
The latter, alongside Proposition~\ref{proposition on Bj vs Pj} and \eqref{proof theorem closeness of spectra j version equation 11}, implies
\begin{equation*}
\label{proof theorem closeness of spectra j version equation 13}
B_lB_l^* v^{(j)}_k=O(k^{-\infty})\quad\text{for}\quad l\in J\setminus\{j\}
\end{equation*}
and
\begin{equation}
\label{proof theorem closeness of spectra j version equation 14}
v^{(j)}_k=B_j B_j^* v^{(j)}_k+O(k^{-\infty}).
\end{equation}
Combining \eqref{proof theorem closeness of spectra j version equation 9} and \eqref{proof theorem closeness of spectra j version equation 14}, and using once again \eqref{proof theorem closeness of spectra j version equation 11}, one obtains
\begin{equation*}
\label{proof theorem closeness of spectra j version equation 15}
A_jB_j B_j^* v^{(j)}_k=\lambda^{(j)}_k B_j B_j^* v^{(j)}_k+O(k^{-\infty}).
\end{equation*}
Composing the above equation with $B_j^*$ on the left, and using \eqref{main theorem 1 equation 2} and \eqref{proof theorem closeness of spectra j version equation 11}, we arrive at
\begin{equation}
\label{proof theorem closeness of spectra j version equation 16}
a_j B_j^* v^{(j)}_k=\lambda^{(j)}_k  B_j^* v^{(j)}_k+O(k^{-\infty}).
\end{equation}
As, on account of \eqref{proof theorem closeness of spectra j version equation 14}, we have
\begin{equation*}
\label{proof theorem closeness of spectra j version equation 17}
\|B_j^* v^{(j)}_k\|_{\mathcal{L}^2}=\|v^{(j)}_k\|_{L^2}+O(k^{-\infty}),
\end{equation*}
formulae \eqref{proof theorem closeness of spectra j version equation 9} and \eqref{proof theorem closeness of spectra j version equation 16} give us (ii).
\end{proof}

Theorem~\ref{theorem about closeness of spectra aj vs Aj} establishes closeness of the spectra of $a_j$ and $A_j$. The following theorem goes a step further and establishes closeness of individual eigenvalues.

\begin{theorem}
\label{theorem about closeness of eigenvalues aj vs Aj}
For every $\alpha>0$ there exists $z_\alpha\in \mathbb{Z}$ such that
\begin{equation}
\label{theorem about closeness of eigenvalues aj vs Aj equation 1}
\ell^{(j)}_k=\lambda^{(j)}_{k+z_\alpha}+O(k^{-\alpha}) \qquad \forall j=1, \ldots, m^+.
\end{equation}
\end{theorem}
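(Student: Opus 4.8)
The plan is to reduce the comparison of the two positive-eigenvalue sequences \eqref{positive spectrum a_j} and \eqref{positive spectrum Aj} to a counting argument, in the spirit of \cite{part2}. The starting observation is that, modulo $\Psi^{-\infty}$, the operator $B_j$ implements a unitary equivalence between $a_j$ and the restriction of $A_j$ to the range of $P_j$. Indeed, Theorem~\ref{main theorem 1} and Proposition~\ref{proposition on Bj vs Pj} give $B_j^*B_j=\operatorname{Id}$, $B_jB_j^*=P_j$ and $P_jB_j=B_j$ modulo smoothing; writing $A_j=P_j^*AP_j+C_j$ with $C_j:=-\operatorname{sgn}(j)\sum_{l\ne j}\operatorname{sgn}(l)P_l^*AP_l$ and using $[A,P_l]=0$ and $P_lP_j=\delta_{lj}P_j$ modulo $\Psi^{-\infty}_m$ (Theorem~\ref{theorem results from part 1}(a)), one gets $C_jP_j=P_jC_j=0$ and $A_jB_j=AB_j$ modulo smoothing, whence $B_j^*A_jB_j=a_j$ modulo $\Psi^{-\infty}_1$. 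On the other hand, Theorem~\ref{theorem results from part 1}(b) together with the sign pattern underlying \eqref{conjugating Aj by U full matrix} shows that $C_j\le0$ modulo $\Psi^{-\infty}_m$, so $C_j$ contributes only finitely many positive eigenvalues to $A_j$. Thus the positive spectrum of $A_j$ should coincide with that of the scalar elliptic operator $a_j$ up to finitely many eigenvalues and up to errors superpolynomially small in $k$.

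I would then make this precise through a counting argument. Reusing the almost-eigenfunctions $B_jf^{(j)}_k$ and $B_j^*v^{(j)}_k$ employed in the proof of Theorem~\ref{theorem about closeness of spectra aj vs Aj}, together with the facts that $B_j$ and $B_j^*$ are almost-isometries modulo $\Psi^{-\infty}$ and that every operator in $\Psi^{-\infty}$ maps these eigenfunctions to $O(k^{-\infty})$ (by Weyl's law and \cite[Proposition~3.1]{part2}), a min-max argument shows that any block of $N$ consecutive eigenvalues of $a_j$ inside an interval $I$ forces at least $N$ eigenvalues of $A_j$ inside a superpolynomially small enlargement of $I$, and conversely. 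Comparing the counting functions $\mathcal{N}_{a_j}(\lambda):=\#\{k:0<\ell^{(j)}_k\le\lambda\}$ and $\mathcal{N}_{A_j}(\lambda):=\#\{k:0<\lambda^{(j)}_k\le\lambda\}$ then bounds $|\mathcal{N}_{A_j}(\lambda)-\mathcal{N}_{a_j}(\lambda)|$ uniformly in $\lambda$; since the spurious positive eigenvalues of $A_j$ coming from $C_j$, and the finitely many non-positive eigenvalues of $a_j$, all lie below any fixed high threshold, the difference of the two counting functions is eventually constant away from spectral clusters. Fixing $\alpha>0$ and reading off this asymptotic constant yields the required integer $z_\alpha$ with $\ell^{(j)}_k=\lambda^{(j)}_{k+z_\alpha}+O(k^{-\alpha})$; carrying the estimates out with the $O(k^{-\infty})$ constants uniform over $M$, and checking that $z_\alpha$ can be chosen the same for all $j=1,\dots,m^+$, is part of the bookkeeping.

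The main obstacle is precisely this passage from the set-theoretic closeness $\operatorname{dist}(\ell^{(j)}_k,\sigma^+(A_j))=O(k^{-\infty})$ of Theorem~\ref{theorem about closeness of spectra aj vs Aj} to a genuine matching of indices: the presence of spectral clusters --- eigenspaces of large multiplicity, or tight bunches of eigenvalues --- means that ``there is a nearby eigenvalue'' does not by itself pin down ``which eigenvalue'', so one must track the correct multiplicities along the whole sequence. This is also why the shift $z_\alpha$ must be allowed to depend on $\alpha$: at accuracy $k^{-\alpha}$ the two ends of a cluster may be resolved differently for $a_j$ and for $A_j$, so refining the error can force a re-indexing. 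Controlling this --- in particular producing the intermediate counting estimates that make the difference of counting functions stabilise --- is exactly where the strategy of \cite{part2} is invoked.
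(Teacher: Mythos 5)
Your overall strategy matches the paper's: push eigenfunctions through $B_j$ and $B_j^*$ to obtain almost-eigenfunctions of the other operator, apply a min-max lemma on a partition of $(0,+\infty)$, show the two eigenvalue counts agree on each cell for large index, and read off the shift $z_\alpha$; the cluster issue you flag is indeed what makes $z_\alpha$ depend on $\alpha$. However, the ``bookkeeping'' you defer is exactly where the content of the proof lies, and two devices are missing without which the plan does not close.

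First, the partition. You appeal to ``an interval $I$'' and a ``superpolynomially small enlargement of $I$'', but Lemma~\ref{lemma about bound on number of ev in interval} produces an enlargement of size $\sqrt{r}\,\varepsilon$, where $r$ is the number of eigenvalues in the block; since $r$ can grow polynomially with the index, the cells must be short enough to control $r$ and to give the length bound $O(k^{-\alpha})$, yet long enough (with endpoints away from both spectra) to swallow the $\sqrt{r}\,\varepsilon$-enlargement. The paper balances these constraints by taking $\nu_n=n^\beta+c_nn^{-1}$ with $\beta=1/(1+\alpha d/s)$, $\gamma=1+d\beta/s$, and $c_n$ chosen so that $\operatorname{dist}(\nu_n,\sigma^+(a_j))$ and $\operatorname{dist}(\nu_n,\sigma^+(A_j))$ are $\gtrsim n^{-\gamma}$ for all $j$; this is where $\alpha$ actually enters, not merely in ``reading off a constant'', and no quantitative conclusion of the form $O(k^{-\alpha})$ is available without fixing these exponents. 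Second, orthonormalization: Lemma~\ref{lemma about bound on number of ev in interval} requires an exactly orthonormal family, whereas $B_jf^{(j)}_k$ (and $B_j^*v^{(j)}_k$) are only almost orthonormal; you invoke the min-max bound without accounting for this. Gram--Schmidt is not an option, because its error grows factorially in $r$ while $r=O(n^{\beta d/s})$; one needs Lemma~\ref{lemma about orthonormalisation}, which gives an orthonormalizing matrix $G$ with $\|G-I\|_{\max}\le\|F-I\|_{\max}$. Your high-level logic and choice of almost-eigenfunctions are right, but these two quantitative ingredients are the proof, not bookkeeping; absent them, the assertion that the difference of counting functions ``is eventually constant away from clusters'' has no justification.
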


\begin{proof}
Let us partition the positive real line $(0,+\infty)$ into intervals $(\nu_n, \nu_{n+1}]$, $n=0,1,2,\ldots$,  whose endpoints are defined in accordance with
\begin{equation*}
\label{proof theorem about closeness of eigenvalues aj vs Aj equation 1}
\nu_0=0, \qquad \nu_n:=n^\beta+c_n \,n^{-1}, \quad n=1,2,\ldots,
\end{equation*}
where
\begin{equation*}
\label{proof theorem about closeness of eigenvalues aj vs Aj equation 2}
\beta:=\frac{1}{1+\frac{\alpha d}{s}}
\end{equation*}
and the constants $c_n\in [-\beta/4, \beta/4]$ are chosen in such a way that
\begin{equation}
\label{proof theorem about closeness of eigenvalues aj vs Aj equation 3}
\min_{j=1,\ldots, m^+} \min\{\operatorname{dist}(\nu_n,\sigma^+(a_j)),\operatorname{dist}(\nu_n,\sigma^+(A_j))\} \ge C n^{-\gamma}
\end{equation}
for some constant $C$ and 
\begin{equation*}
\label{proof theorem about closeness of eigenvalues aj vs Aj equation 4}
\gamma:=1+\frac{d \beta}{s}.
\end{equation*}
That such a partition exists follows in a straightforward manner from the results in \cite[Subsec.~3.2.1]{part2}.

A simple asymptotic estimate tells us that
\begin{equation}
\label{estimate on size of partition}
\nu_{n+1}-\nu_n=O(\nu_n^{-\frac{\alpha d}s})\quad \text{as}\quad n\to+\infty.
\end{equation}
Our strategy consists in proving that, asymptotically, each interval $(\nu_n,\nu_{n+1}]$ contains the same number of eigenvalues $\ell^{(j)}_j$ and $\lambda^{(j)}_k$, and then using \eqref{estimate on size of partition} to achieve \eqref{theorem about closeness of eigenvalues aj vs Aj equation 1} with the desired accuracy --- see Figure~\ref{figure partition}. To this end, let us define
\begin{equation*}
\label{number of ell in each interval}
N_j(n):=\#\{\lambda^{(j)}_k\ :\ \nu_n\le \lambda^{(j)}_k \le \nu_{n+1}\},
\end{equation*}
\begin{equation*}
\label{number of lambdas in each interval}
n_j(n):=\#\{\ell^{(j)}_k\ :\ \nu_n\le \ell^{(j)}_k \le \nu_{n+1}\}.
\end{equation*}

%\begin{equation}
%\label{number of ell in each interval}
%N_j(\lambda;\rho):=\#\{\lambda^{(j)}_k\ :\ \lambda-\rho\le \lambda^{(j)}_k \le \lambda+\rho\},
%\end{equation}
%\begin{equation}
%\label{number of lambdas in each interval}
%n_j(\ell;\rho):=\#\{\ell^{(j)}_k\ :\ \ell-\rho\le \ell^{(j)}_k \le \ell+\rho\}.
%\end{equation}

\begin{figure}
\centering
\begin{tikzpicture}[scale=1.15]

% axis
\draw[-{>[scale=3.5,
          length=2,
          width=2]}
] (0,0)--(13,0);

% round bracket
\draw [black,thick,domain=160:200] plot ({1*cos(\x)+2.5}, {1*sin(\x)});

% square bracket
\draw [thick] (10.4,-.33)--(10.5,-.33)--(10.5,.33)--(10.4,.33);

% nu's
\node [above] at (1.5,.33) {$\nu_{n}=n^\beta+c_n\,n^{-1}$};
\node [above] at (10.5,.33) {$\nu_{n+1}$};

% eigenvalues
\draw[fill,red] (3,0) circle [radius=0.05];
\draw[fill,red] (6.5,0) circle [radius=0.05];
\draw[fill,red] (7.8,0) circle [radius=0.05];

\node [below,red] at (3,-.1) {$\lambda^{(j)}_{k+1}$};
\node [below,red] at (7.8,-.1) {$\lambda^{(j)}_{k+N_j(n)}$};

\draw[fill,blue] (.25,0) circle [radius=0.05];
\draw[fill,blue] (4,0) circle [radius=0.05];
\draw[fill,blue] (5,0) circle [radius=0.05];
\draw[fill,blue] (9.5,0) circle [radius=0.05];
\draw[fill,blue] (12.2,0) circle [radius=0.05];

\node [below,blue] at (4,-.1) {$\ell^{(j)}_{k+r_\alpha+1}$} ;
\node [below,blue] at (9.5,-.1) {$\ell^{(j)}_{k+r_\alpha+n_j(n)}$} ;

% distance from spectrum
\draw [thick, decorate,decoration={brace,amplitude=6pt,mirror}] (.25,-.9) -- (1.45,-.9);
\node [below] at (.87,-1.2) {$\gtrsim n^{-\gamma}$};
\draw [thick, decorate,decoration={brace,amplitude=6pt,mirror}] (1.5,-.9) -- (3,-.9);
\node [below] at (2.25,-1.2) {$\gtrsim n^{-\gamma}$};

\draw [thick, decorate,decoration={brace,amplitude=6pt,mirror}] (9.5,-.9) -- (10.5,-.9);
\node [below] at (10,-1.2) {$\gtrsim n^{-\gamma}$};
\draw [thick, decorate,decoration={brace,amplitude=6pt,mirror}] (10.55,-.9) -- (12.2,-.9);
\node [below] at (11.4,-1.2) {$\gtrsim n^{-\gamma}$};

% size of the interval
\draw [thick, decorate,decoration={brace,amplitude=9pt}] (1.5,1.2) -- (10.5,1.2);
\node [above] at (6,1.8) {$\sim \nu_n^{-\frac{\alpha d}{s}}$};

% formulae for beta and gamma
%\node [above] at (1.5,1.8) {$\beta=\frac{1}{1+\frac{\alpha d}{s}}$};
%\node [above] at (10.5,1.9) {$\gamma=1+\frac{\beta d}{s}$};

\end{tikzpicture}
\caption{Partition of the positive real line%, with $\beta=\frac{1}{1+\frac{\alpha d}{s}}$ and $\gamma=1+\frac{\beta d}{s}$
}
\label{figure partition}
\end{figure}
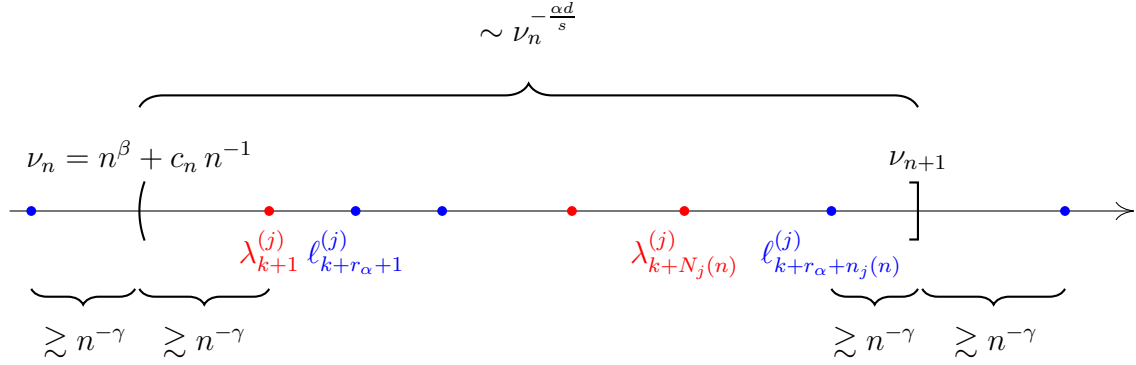

Let us begin by showing that for sufficiently large $n$ we have
\begin{equation}
\label{proof theorem about closeness of eigenvalues aj vs Aj equation 5}
N_j(n)\le n_j(n).
\end{equation}
Suppose $N_j(n)=r$.  Let $\lambda^{(j)}_{q+1},\le\ldots,\le \lambda^{(j)}_{q+r}$ be the $r$ eigenvalues of $A_j$ in $(\nu_n, \nu_{n+1}]$ (recall that there is a neighbourhood of the endpoints free from eigenvalues) and let $v_1, \ldots, v_r$ be the corresponding orthonormalised eigenfunctions,
\[
\langle v_k, v_{k'} \rangle=\delta_{kk'}, \qquad k,k'=1, \ldots, r.
\]
It follows from \eqref{proof theorem closeness of spectra j version equation 14} that
\begin{equation}
\label{proof theorem about closeness of eigenvalues aj vs Aj equation 6}
\prec\! B_j^* v_k, B_j^* v_{k'} \! \succ= \delta_{kk'} +O(n^{-\infty})\quad\text{as}\quad n\to+\infty.
\end{equation}
Let $F$ be the $r\times r$ matrix whose matrix elements are given by
\begin{equation}
\label{proof theorem about closeness of eigenvalues aj vs Aj equation 7}
F_{kk'}:=\prec\! B_j^* v_k, B_j^* v_{k'} \! \succ, \qquad k,k'=1,\ldots, r,
\end{equation}
and let $G$ be the matrix given by Lemma~\ref{lemma about orthonormalisation}.  Define
\begin{equation*}
f_k:=\sum_{l=1}^r G_{lk}\, B_j^*v_l.
\end{equation*}
Then formula \eqref{lemma about orthonormalisation equation 2} implies
\begin{equation}
\label{proof theorem about closeness of eigenvalues aj vs Aj equation 8}
\prec\! f_k, f_{k'}   \! \succ=\delta_{kk'}, \qquad k,k'=1,\ldots, r,
\end{equation}
whereas formulae \eqref{lemma about orthonormalisation equation 3}, \eqref{proof theorem about closeness of eigenvalues aj vs Aj equation 6} and elliptic regularity theory imply
\begin{equation}
\label{proof theorem about closeness of eigenvalues aj vs Aj equation 9}
f_k=B_j^* v_k+O(n^{-\infty})\quad\text{as}\quad n\to+\infty.
\end{equation}
Observe that one cannot achieve \eqref{proof theorem about closeness of eigenvalues aj vs Aj equation 8}--\eqref{proof theorem about closeness of eigenvalues aj vs Aj equation 9} via the Gram--Schmidt process,  because the latter involves a number of terms growing factorially with $r$,  which would produce too big an error in \eqref{proof theorem about closeness of eigenvalues aj vs Aj equation 9}.

Combining \eqref{proof theorem about closeness of eigenvalues aj vs Aj equation 9} and \eqref{proof theorem closeness of spectra j version equation 16} we conclude that for every $\tilde{\gamma}>0$ there exists a constant $C_{\tilde{\gamma}}$ such that
\begin{equation}
\label{proof theorem about closeness of eigenvalues aj vs Aj equation 10}
\|(a_j-\lambda^{(j)}_{q+k}) f_k\|_{\mathcal{L}^2}\le C_{\tilde{\gamma}}\, n^{-\tilde \gamma}, \qquad k=1,\ldots, r.
\end{equation}

In view of \eqref{proof theorem about closeness of eigenvalues aj vs Aj equation 10} and \eqref{proof theorem about closeness of eigenvalues aj vs Aj equation 8} we can apply Lemma~\ref{lemma about bound on number of ev in interval} to get
\begin{equation}
\label{proof theorem about closeness of eigenvalues aj vs Aj equation 11}
\#\{\ell^{(j)}_k\ |\ \ell^{(j)}_k \in [\lambda^{(j)}_{q+1}-C_{\tilde{\gamma}}\sqrt{r}n^{-\tilde{\gamma}},\lambda^{(j)}_{q+r}+C_{\tilde{\gamma}}\sqrt{r}n^{-\tilde{\gamma}}]\}\ge r\,.
\end{equation}
Now, Weyl's law tells us that $r=O(n^{\beta d/s})$. Therefore,  if $\tilde{\gamma}>\tfrac{\beta d}{2s}+\gamma$,  condition \eqref{proof theorem about closeness of eigenvalues aj vs Aj equation 3} implies that for sufficiently large $n$ we have
\begin{equation}
\label{proof theorem about closeness of eigenvalues aj vs Aj equation 12}
[\lambda^{(j)}_{q+1}-C_{\tilde{\gamma}}\sqrt{r}n^{-\tilde{\gamma}},\lambda^{(j)}_{q+r}+C_{\tilde{\gamma}}\sqrt{r}n^{-\tilde{\gamma}}] \subset (\nu_n, \nu_{n+1}).
\end{equation}
Formulae \eqref{proof theorem about closeness of eigenvalues aj vs Aj equation 11} and \eqref{proof theorem about closeness of eigenvalues aj vs Aj equation 12} give us \eqref{proof theorem about closeness of eigenvalues aj vs Aj equation 5}.

Next, let us show that for sufficiently large $n$ we have
\begin{equation}
\label{proof theorem about closeness of eigenvalues aj vs Aj equation 13}
n_j(n)\le N_j(n).
\end{equation}
Suppose $n_j(n)=r$.  Let $\ell^{(j)}_{q+1},\le\ldots,\le \ell^{(j)}_{q+r}$ be the $r$ eigenvalues of $a_j$ in $(\nu_n, \nu_{n+1}]$ (recall that there is a neighbourhood of the endpoints free from eigenvalues) and let $f_1, \ldots, f_r$ be the corresponding orthonormalised eigenfunctions,
\begin{equation}
\label{proof theorem about closeness of eigenvalues aj vs Aj equation 14}
\langle f_k, f_{k'} \rangle=\delta_{kk'}, \qquad k,k'=1, \ldots, r.
\end{equation}
Conditions \eqref{main theorem 1 equation 2}--\eqref{main theorem 1 equation 3}, Theorem~\ref{theorem results from part 1} and \eqref{proof theorem about closeness of eigenvalues aj vs Aj equation 14} imply 
\begin{equation*}
\label{proof theorem about closeness of eigenvalues aj vs Aj equation 15}
\langle B_j f_k, B_j f_{k'} \rangle=\delta_{kk'}+O(n^{-\infty}) \quad\text{as}\quad n\to+\infty.
\end{equation*}
Let $F$ be the matrix whose matrix elements are given by
\begin{equation*}
\label{proof theorem about closeness of eigenvalues aj vs Aj equation 16}
F_{kk'}:=\langle B_j f_k, B_j f_{k'} \rangle, \qquad k,k'=1,\ldots,r,
\end{equation*}
and let $G$ be the matrix given by Lemma~\ref{lemma about orthonormalisation}.  Arguing as above and using \eqref{proof theorem closeness of spectra j version equation 5}, we conclude that the $m$-columns
\begin{equation*}
\label{proof theorem about closeness of eigenvalues aj vs Aj equation 17}
v_k:=\sum_{l=1}^r G_{lk}\, B_j f_l, \qquad k=1,\ldots, r,
\end{equation*}
satisfy
\begin{equation*}
\label{proof theorem about closeness of eigenvalues aj vs Aj equation 18}
\langle v_k, v_{k'} \rangle=\delta_{kk'}, \qquad k,k'=1,\ldots,r, 
\end{equation*}
and for every $\tilde{\gamma}>0$ there exists $C_{\tilde \gamma}>0$ such that
\begin{equation*}
\label{proof theorem about closeness of eigenvalues aj vs Aj equation 19}
\|(A_j-\ell^{(j)}_{q+k})v_k\|_{L^2}\le C_{\tilde \gamma} \,n^{-\tilde{\gamma}}.
\end{equation*}
Using once again Lemma~\ref{lemma about bound on number of ev in interval} one obtains analogues of \eqref{proof theorem about closeness of eigenvalues aj vs Aj equation 11} and \eqref{proof theorem about closeness of eigenvalues aj vs Aj equation 12},  which in turn imply \eqref{proof theorem about closeness of eigenvalues aj vs Aj equation 13}.

All in all,  on account of 
\eqref{proof theorem about closeness of eigenvalues aj vs Aj equation 5}
and
\eqref{proof theorem about closeness of eigenvalues aj vs Aj equation 13},
we have established that there exists $K\in \mathbb{N}$ such that for all $n>K$ we have
\begin{equation}
\label{proof theorem about closeness of eigenvalues aj vs Aj equation 20}
N_j(n)=n_j(n).
\end{equation}
Suppose $\lambda^{(j)}_k\in(\nu_n, \nu_{n+1}]$.  Then the Weyl's Law gives us
\begin{equation}
\label{proof theorem about closeness of eigenvalues aj vs Aj equation 21}
\nu_n=c\, k^{\frac{s}{d}}+o(k^{\frac{s}{d}}),
\end{equation}
where $c$ is a constant that can be explicitly computed. We can use \eqref{proof theorem about closeness of eigenvalues aj vs Aj equation 21} to equivalently recast \eqref{estimate on size of partition} as
\begin{equation}
\label{proof theorem about closeness of eigenvalues aj vs Aj equation 22}
\nu_{n+1}-\nu_n=O(k^{-\alpha}).
\end{equation}
Combining formulae \eqref{proof theorem about closeness of eigenvalues aj vs Aj equation 20} and \eqref{proof theorem about closeness of eigenvalues aj vs Aj equation 22} we arrive at \eqref{theorem about closeness of eigenvalues aj vs Aj equation 1}
\end{proof}

\begin{remark}
Observe that Theorem~\ref{theorem about closeness of spectra aj vs Aj} follows from Theorem~\ref{theorem about closeness of eigenvalues aj vs Aj}; we stated the two theorems as separate results for the sake of logical clarity.  Also note that the additional material is not completely unnecessary: intermediate results obtained in the proof of the former theorem are needed in the proof of the latter.
\end{remark}

We are now in a position to prove Theorems~\ref{main theorem 4} and~\ref{main theorem 5}.

\begin{proof}[Proof of Theorem~\ref{main theorem 4}]
On account of \eqref{definition of Aj} and \cite[Eqn.~(2.2)]{part2},  Theorem~\ref{theorem about closeness of spectra aj vs Aj}(ii) and \cite[Theorem~2.2]{part2} imply \eqref{main theorem 4 equation 1}, whereas Theorem~\ref{theorem about closeness of spectra aj vs Aj}(i) and \cite[Theorem~2.1]{part2} imply \eqref{main theorem 4 equation 2}.
\end{proof}

\begin{proof}[Proof of Theorem~\ref{main theorem 5}]
On account of \eqref{definition of Aj} and \cite[Eqn.~(2.2)]{part2},  formula \eqref{main theorem 5 equation 1} follows from Theorem~\ref{theorem about closeness of eigenvalues aj vs Aj} and \cite[Theorem~2.3]{part2}.
\end{proof}

\subsection{Spectral asymptotics}
\label{The second Weyl coefficient}

In this section we will discuss the relation between the Weyl coefficients of $A$ and those of its diagonalised version $\tilde A$ in the special case of a first order operator. Throughout this section we set $s=1$ and, without loss of generality, we assume $m^+\ge 1$.

Let $v_k$ be a sequence of orthonormalised eigenfunctions of the operator $A$ corresponding to the sequence of eigenvalues \eqref{positive spectrum A}.  Let $\tilde v_k$ be a sequence of orthonormalised eigenfunctions of the operator $\tilde A$ corresponding to the sequence of eigenvalues \eqref{spectra of a_j combined}.

Let us define the spectral densities
\begin{equation}
\label{spectral density A}
N(x,\lambda):=
\begin{cases}
0 & \text{if}\ \lambda\le 0,\\
\displaystyle\sum_{k\ :\ 0<\lambda_k<\lambda} [v_k(x)]^* v_k(x) & \text{if}\ \lambda>0,
\end{cases}
\end{equation}
and
\begin{equation}
\label{spectral density tilde A}
\tilde N(x,\lambda):=
\begin{cases}
0 & \text{if}\ \lambda\le 0,\\
\displaystyle \sum_{k\ :\ 0<\zeta_k<\lambda} [\tilde v_k(x)]^*\tilde v_k(x) & \text{if}\ \lambda>0.
\end{cases}
\end{equation}
The quantities \eqref{spectral density A} and \eqref{spectral density tilde A} are sometimes referred to as \emph{local counting functions}, because they count the number of positive eigenvalues of the operator below a given $\lambda$, with  $x$-dependent (hence `local') weight given by the modulus squared of the corresponding eigenfunctions. 

Integrating \eqref{spectral density A} and \eqref{spectral density tilde A} one obtains the usual (global positive) counting functions 
\begin{equation}
\label{counting function A}
N(\lambda):=\int_M N(x,\lambda)\,dx=
\begin{cases}
0 & \text{if}\ \lambda\le 0,\\
\displaystyle \sum_{k\ :\ 0<\lambda_k<\lambda} 1 & \text{if}\ \lambda>0,
\end{cases}
\end{equation}
and
\begin{equation}
\label{counting function tilde A}
\tilde N(\lambda):=\int_M \tilde N(x,\lambda)\,dx=
\begin{cases}
0 & \text{if}\ \lambda\le 0,\\
\displaystyle \sum_{k\ :\ 0<\zeta_k<\lambda}1 & \text{if}\ \lambda>0.
\end{cases}
\end{equation}
Note that \eqref{spectral density A} and \eqref{spectral density tilde A} are densities over $M$, whereas \eqref{counting function A} and \eqref{counting function tilde A} are scalar functions.

For each $j\in J$ let 
\begin{equation}
\label{hamiltonian flow}
t \mapsto (x^{(j)}(t;y,\eta), \xi^{(j)}(t;y,\eta))
\end{equation}
be the solution to Hamilton's equations
\begin{equation*}
\dot x^{(j)}=h^{(j)}_\xi(x^{(j)},\xi^{(j)}), \qquad \dot \xi^{(j)}=-h^{(j)}_x(x^{(j)},\xi^{(j)})
\end{equation*}
with initial condition $(x^{(j)}(0;y,\eta), \xi^{(j)}(0;y,\eta))=(y,\eta)\in T'M$.
Define $T_0$ to be the infimum of lengths of all the Hamiltonian loops generated by the Hamiltonian flow \eqref{hamiltonian flow} for all $j\in J$ and all possible initial conditions.
Let $\hat \mu : \mathbb{R} \to \mathbb{C}$ be a smooth function such that $\mu \equiv 1$ in a neighbourhood of $0$ and $\operatorname{supp} \hat \mu \subset (-T_0, T_0)$. Let $\mu:=\mathcal{F}^{-1}_{t\to \lambda}\hat \mu$ be the inverse Fourier transform of $\hat \mu$. 

It is well known \cite{DuGu,Ivr80,Ivr84,Ivr98, rozenblum,safarov,SaVa} that the mollified derivative of the spectral density admits a 
complete asymptotic expansion in integers powers of $\lambda$:
\begin{equation}
\label{mollified derivative A}
(N'*\mu)(x,\lambda)=w_{d-1}(x)\lambda^{d-1}+w_{d-2}(x)\lambda^{d-2}+ \ldots \quad \text{as}\quad \lambda\to +\infty,
\end{equation}
\begin{equation}
\label{mollified derivative tilde A}
(\tilde N'*\mu)(x,\lambda)=\tilde w_{d-1}(x)\lambda^{d-1}+\tilde w_{d-2}(x)\lambda^{d-2}+ \ldots \quad \text{as}\quad \lambda\to +\infty.
\end{equation}
Here the prime denotes the derivative in $\lambda$ and $*$ convolution with respect to $\lambda$. The smooth densities appearing as coefficients of powers of $\lambda$ in \eqref{mollified derivative A} and \eqref{mollified derivative A} are usually called \emph{local Weyl coefficients}. Integrating the local Weyl coefficients over $M$ one obtains scalars known as (global) \emph{Weyl coefficients}.

Computing Weyl coefficients is a delicate task.  A formula for the second local Weyl coefficient $w_{d-2}$ for systems ($m\ge 2$) was first obtained in 2013 by Chervova, Downes and Vassiliev \cite{CDV}, fixing decades of incorrect or incomplete publications in the subject (see \cite[Section~11]{CDV}), and later recovered by a completely different method by Avetisyan, Sj\"ostrand and Vassiliev \cite{ASV}.  The formula for $w_{d-2}$ reads \cite[Eqn~(1.24)]{CDV} \cite[Eqn.~(1.6)]{ASV} \cite[Eqn.~(5.10)]{part2}
\begin{multline}
\label{second Weyl coefficient for A}
w_{d-2}^{(j)}(x)=-\frac{d(d-1)}{(2\pi)^d}
\ \int\limits_{h^{(j)}(x,\xi)<1}
\operatorname{tr}\left(
P^{(j)}A_\mathrm{sub}
+\frac i2\{P^{(j)}, P^{(j)}\}A_\mathrm{prin}
\right.
\\
\left.
-\frac {1}{d-1}h^{(j)}(P_j)_\mathrm{sub}
\right)
(x,\xi)\,
d\xi\,.
\end{multline}
Working with systems, as opposed to scalar equations, introduces substantial difficulties, which one needs to overcome and which are simply not present when studying the spectrum of scalar problems. 

Now, the diagonalization argument presented above effectively reduces our original system $A$ to a collection of scalar operators, the diagonal operator $\tilde{A}$.  It is then natural to ask whether the approximate diagonalization offers a way to provide a different proof for \eqref{second Weyl coefficient for A}, one that relies on the corresponding formula for scalar operators. Indeed, it is easy to see that the unique (modulo $\Psi^{-\infty}_m$) pseudodifferential basis given by Theorem~\ref{theorem results from part 1} associated with the operator $\tilde{A}$ is
\begin{equation}
\label{Pj tilde}
\tilde{P}_j:=B^*P_jB
%=B^*B_jB_j^*B \mod \Psi^{-\infty}_m
=\tilde{v}^{(j)}[\tilde{v}^{(j)}]^*\operatorname{Id} \mod \Psi^{-\infty}, \qquad j\in J,
\end{equation}
where the $\tilde v^{(j)}$,
\begin{equation}
\label{eigenvectors tilde A prin}
[\tilde v^{(j)}]_q=\delta_{jq}, \qquad j,q\in J,
\end{equation}
are the eigenvectors of $\tilde A_\prin$. Formula \eqref{Pj tilde} immediately implies
\begin{equation}
\label{subprincipal tilde Pj}
(\tilde P_j)_\sub=0, \qquad j\in J.
\end{equation}
On account of \eqref{eigenvectors tilde A prin} and \eqref{subprincipal tilde Pj}, formula \eqref{second Weyl coefficient for A} applied to the operator $\tilde A$ gives us
\begin{equation*}
\label{second weyl coefficient for A tilde}
\tilde w_{d-2}^{(j)}(x)=-\frac{d(d-1)}{(2\pi)^d}\sum_{j=1}^{m^+}
\ \int\limits_{h^{(j)}(x,\xi)<1}
(a_j)_\sub
(x,\xi)\,
d\xi\,,
\end{equation*}
namely,  as expected, the second Weyl coefficient for $\tilde A$ is the sum of the second Weyl coefficients of the scalar operators $a_j$, $j=1,\ldots, m^+$,  computed in accordance with the classical Duistermaat and Guillemin's formula for scalar elliptic operators \cite[Eqn.~(2.2)]{DuGu}.

Unfortunately, this doesn't quite work: the operators $A$ and $\tilde{A}$ are related via conjugation by the pseudodifferential operator $B$. Even though, as we have shown, this does not change the spectrum of $A$ too much --- where `not too much' is quantified by the remainder term in \eqref{main theorem 5 equation 1} ---
there is no reason why it should preserve the asymptotic expansion of the spectral density \eqref{spectral density A}. 
The diagonalization argument as presented in this paper only allows one to recover {\it global} spectral asymptotics.
In other words,  one can exploit diagonalization to recover in a simpler way --- i.e.~relying only on formulae for scalar operators --- the integral over $M$ of \eqref{second Weyl coefficient for A}, but not \eqref{second Weyl coefficient for A} itself.

%We proved in Theorem~\ref{main theorem 5} that conjugation by $B$  However, there is no reason why the same should be true in general for the spectral density.  This means that the diagonalization procedure allows one to compute the global Weyl coefficients of $A$ as the sum of the corresponding coefficients of the $a_j$'s, 
%\begin{equation}
%\int_M w_{d-k}(x) dx=-\frac{d(d-1)}{(2\pi)^d}\sum_{j=1}^{m^+}\
%\int\limits_{h^{(j)}<1}
%(a_j)_\sub
%(x,\xi)\,
%dx\, d\xi\,,
%\end{equation}
%but not the local asymptotics for the spectral density. In other words,  one can exploit the arguments presented in this paper to recover in a simpler way --- i.e.~relying only on formulae for scalar operators --- the integral over $M$ of \eqref{second Weyl coefficient for A}, but not \eqref{second Weyl coefficient for A}
%itself.

\

To shed additional light onto the above discussion,  let us compare the approach pursued in this paper with that of \cite{part2}.  In \cite{part2} the author and Vassiliev showed that the spectrum of an elliptic system $A$ partitions into $m$ series of eigenvalues,  labelled by the eigenvalues $h^{(j)}$ of $A_\prin$, up to a superpolynomial error.  This mirrors, at the spectral level, the decomposition of $L^2(M)$ into almost-orthogonal almost-invariant subspaces via the pseudodifferential projections $P_j$, see~Theorem~\ref{theorem results from part 1}. This result was achieved by showing that the positive spectrum of $A$ is approximated by the union of the positive spectra of (a minor modification of) the elliptic matrix operators $A_j$, $j=1, \ldots,m^+$, defined in accordance with~\eqref{definition of Aj}.  For a first order system, it was also shown that the positive part of the propagator $U(t):=e^{-itA}$ decomposes, in a similar manner, as
\begin{equation}
\label{propagator relation 1}
\theta(A)\,U(t)=\sum_{j=1}^{m^+} U^{(j)}(t) \mod C^{\infty}(\mathbb{R}; \Psi^{-\infty}_m),
\end{equation}
where
\begin{equation}
\label{propagator relation 2}
U^{(j)}(t)=P_j U(t)=U(t)P_j= \theta(A_j)\,e^{-itA_j}\mod C^{\infty}(\mathbb{R}; \Psi^{-\infty}_m), \qquad j=1,\ldots, m^+.
\end{equation}
Here $\theta(\cdot)$ is the Heaviside theta function, see \cite[Sec.~7]{part1}.
We refer the reader to \cite{CDV,wave, dirac,lorentzian} for additional details on $U(t)$, $U^{(j)}(t)$ and their explicit construction. Properties \eqref{propagator relation 1} and \eqref{propagator relation 2} allowed us to use Levitan's wave method to compute local asymptotics for the spectral density `along invariant subspaces' and express the second local Weyl coefficients of $A$ as the sum of the second local Weyl coefficients of the $A_j$'s.

The same argument wouldn't work when approaching the problem via diagonalization. Indeed, let $\tilde{U}(t):=e^{-it\tilde A}$. Using \eqref{propagator relation 1},  \eqref{propagator relation 2} and \eqref{main theorem 1 equation 3}, it is not difficult to see that
\begin{equation}
\label{propagator relation 3}
B^*U^{(j)}(t)B=\tilde{P}^{(j)}e^{-ita_j}\mod C^{\infty}(\mathbb{R}; \Psi^{-\infty}_m)
\end{equation}
and
\begin{equation}
\label{propagator relation 4}
\theta(A)\,U(t)=B\theta(\tilde A)e^{-it\tilde A} B^*= B\left(\sum_{j=1}^{m^+} \tilde{P}^{(j)}e^{-ita_j}\right)B^* \mod C^{\infty}(\mathbb{R}; \Psi^{-\infty}_m),
\end{equation}
where $\tilde{P}^{(j)}:=\tilde v^{(j)}[\tilde v^{(j)}]^*$ (recall \eqref{eigenvectors tilde A prin}).  In \eqref{propagator relation 3} and \eqref{propagator relation 4}, unlike in \eqref{propagator relation 2}, the propagator of $A$ and that of $\tilde{A}$ are not related directly, but via conjugation by the pseudodifferential operator $B$.%: this is bound to affect local asymptotics.

\

All in all,  on the one hand the diagonalization approach presented here has the advantage of turning an elliptic system into a collection of elliptic scalar operators in a way that essentially preserves the spectrum and is compatible with pseudodifferential projections $P_j$, which may be quite useful when dealing with applications, say, to operators of interest in theoretical physics. On the other hand,  the approach pursued in \cite{part2} is, in a sense, more natural to study the spectrum of $A$, in that it captures finer information,  and it always works,  even in the presence of the topological obstructions from Remark~\ref{remark topological obstructions}.

\section{An example: the operator of linear elasticity}
\label{An example: the operator of linear elasticity}

In this section we apply our results to an explicit example: the Lam\'e operator in dimension 2. Throughout this section we adopt Einstein's summation convention over repeated indices.

Let $M$ be the 2-torus $\mathbb{T}^2$ endowed with the standard flat metric $g$.  The operator of linear elasticity $L$ on vector fields,  also known as the \emph{Lam\'e operator}, is defined in accordance with
\begin{equation*}
\label{definition of L variational form}
\frac12 \int_M g_{\alpha\beta}\,v^\alpha (Lv)^\beta \rho\, dx=E(v),
\end{equation*}
where $E(v)$ is the potential energy of elastic deformation
\begin{equation}
\label{potential energy of elastic deformation}
E(v):=\frac{1}{2}\int_M \left(\lambda(\nabla_\alpha v^\alpha)^2+\mu(\nabla_\alpha v_\beta+\nabla_\beta v_\alpha)\nabla^\alpha v^\beta \right) \rho\,dx,
\end{equation}
$\rho(x):=\sqrt{\operatorname{det}g_{\alpha\beta}(x)}$ is the Riemannian density,  $\nabla$ is the Levi-Civita connection, and the scalars $\lambda$ and $\mu$ are the \emph{Lam\'e parameters}. We refer the reader to \cite{MR0106584,diffeo} for further details on the derivation. In order to guarantee strong convexity, the Lam\'e parameters are assumed to satisfy the conditions
\begin{equation}
\label{strong convexity}
\mu>0, \qquad \lambda+\mu>0,
\end{equation}
see, e.g., \cite{miyanishi}. 
Integrating \eqref{potential energy of elastic deformation} by parts yields an explicit formula for $L$:
\begin{equation}
\label{L}
(Lv)^\alpha=-\mu(\nabla_\beta \nabla^\beta v^\alpha+\operatorname{Ric}^\alpha{}_\beta \,v^\beta)-(\lambda+\mu)\nabla^\alpha\nabla_\beta v^\beta.
\end{equation}
Here $\operatorname{Ric}$ is the Ricci tensor.

In order to apply our results, we need to turn the operator $L$, which acts on 2-vectors, into an operator acting on 2-columns of half-densities. This is done as follows.  

Let $e_j$, $j=1,2$, be a global orthonormal framing on $\mathbb{T}^2$; we denote by $e_j{}^\alpha$ the $\alpha$-th component of the $j$-th vector field.  Put
\begin{equation}
e^j{}_\alpha:=\delta^{jk}g_{\alpha\beta}\,e_k{}^\beta,
\end{equation}
and consider the operator $S$, 
\begin{equation}
(Sv)^j:=e^j{}_\alpha v^\alpha\,,
\end{equation}
which maps 2-vectors to 2-columns of scalar functions.
The operator of linear elasticity acting on half-densities is defined as
\begin{equation}
\label{L on half densities}
L_{1/2}:= \rho^{1/2}SLS^{-1}\rho^{-1/2}.
\end{equation}
Of course, $L_{1/2}\in \Psi^2_2$.

In what follows we will compute the subprincipal symbol of the approximate diagonalization of $L_{1/2}$.  In order to express our formulae in terms of geometric invariants, we need to introduce some additional definitions.

Let $\nabla_W$ be the Weitzenb\"ock connection associated with the framing $\{e_j\}_{j=1}^2$, namely, the curvature-free metric compatible connection whose connection coefficients are
\begin{equation*}
\Upsilon^{\alpha}{}_{\beta\gamma}:=e_j{}^\alpha\dfrac{\partial e^j{}_\gamma}{\partial x^\beta}.
\end{equation*}
As we are working in dimension two, the torsion tensor
$
T^{\alpha}{}_{\beta\gamma}:=\Upsilon^{\alpha}{}_{\beta\gamma}-\Upsilon^{\alpha}{}_{\gamma\beta}
$
of $\nabla_W$ is equivalent to a covector field
\begin{equation*}
\label{covector field t}
t_\alpha:=\frac12 T_\alpha{}^{\beta\gamma} \epsilon_{\beta\gamma}\, \rho,
\end{equation*}
where $\epsilon_{\beta\gamma}$ is the totally antisymmetric symbol, $\epsilon_{12}=+1$.
Here and further on, indices are raised and lowered using the metric, via the musical isomorphism.

By means of straightforward calculations, formulae \eqref{L}--\eqref{L on half densities} yield
\begin{equation}
\label{L principal}
(L_{1/2})_\prin=\mu h^2\,I+(\lambda+\mu)h^2\, pp^T
\end{equation}
and
\begin{equation}
\label{L subprincipal}
(L_{1/2})_\sub=i(\lambda+3\mu)t^\alpha\xi_\alpha \,\epsilon,
\end{equation}
where
\begin{equation}
\label{h and p}
h(x,\xi):=\sqrt{g^\alpha\beta(x)\xi_\alpha\xi_\beta}\,,
\qquad
p:=\frac{1}{h}\begin{pmatrix}
e_1{}^\alpha \xi_\alpha\\
e_2{}^\alpha \xi_\alpha
\end{pmatrix},
\qquad
\epsilon:=\begin{pmatrix}
0 & 1\\
-1 & 0
\end{pmatrix}.
\end{equation}

Analysing \eqref{L principal} we conclude that the eigenvalues of $(L_{1/2})_\prin$ are
\begin{equation}
\label{eigenvalue L prin}
h^{(1)}=\mu h^2, \qquad h^{(2)}=(\lambda+2\mu)h^2
\end{equation}
and the corresponding orthonormalised eigenvectors are
\begin{equation}
\label{eigenvectors L prin}
v^{(1)}=\epsilon \,p\,, \qquad v^{(2)}=p.
\end{equation}
Note that conditions \eqref{strong convexity} imply $h^{(2)}/h^{(1)}>1$, so, in particular,  that the eigenvalues are simple.

\begin{theorem}
\label{theorem subprincipal lame}
Let $\tilde{L}_{1/2}:=B^*L_{1/2}B$, where $B$ is the almost-unitary operator given by Theorems~\ref{main theorem 1} and~\ref{main theorem 3} for $A=L_{1/2}$. Then we have
\begin{equation}
\label{subprincipal L diagonal}
(\tilde{L}_{1/2})_\sub=0.
\end{equation}
\end{theorem}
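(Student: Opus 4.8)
The plan is to apply the explicit formula for the subprincipal symbol of $B_j$ from Theorem~\ref{main theorem 2}, together with the identity \eqref{subprincipal symbol of composition of three operators} for the subprincipal symbol of a triple product, to the operators $a_j := B_j^* L_{1/2} B_j$ for $j=1,2$, and show that the result vanishes. First I would compute the subprincipal symbols $(P_j)_\sub$ of the pseudodifferential projections associated with $L_{1/2}$; this can be done either directly from \cite[Theorem~2.3]{part1} or from the known formula relating $(P_j)_\sub$ to the eigenprojections $P^{(j)}$, their Poisson brackets, and $(L_{1/2})_\sub$. Given the simple two-dimensional geometry — flat torus, explicit eigenvectors \eqref{eigenvectors L prin} built from $p$ and $\epsilon p$, and the purely off-diagonal subprincipal symbol \eqref{L subprincipal} proportional to $\epsilon$ — I expect $(P_j)_\sub$ to reduce to an explicit expression in $h$, $p$, $t_\alpha$, and $\epsilon$.

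Next I would assemble $(a_j)_\sub$ from \eqref{subprincipal symbol of composition of three operators} with $P = B_j^*$, $Q = L_{1/2}$, $R = B_j$. This requires: (i) $(B_j)_\prin = v^{(j)}$ and $(B_j^*)_\prin = [v^{(j)}]^*$; (ii) $(B_j)_\sub$ from \eqref{main theorem 2 equation 1}, with the free function $f^{(j)}$; (iii) $(B_j^*)_\sub = [(B_j)_\sub]^*$; (iv) the Poisson-bracket terms $\{[v^{(j)}]^*, L_{\prin}\}$, $\{[v^{(j)}]^*, L_{\prin}, v^{(j)}\}$, and $\{L_{\prin}, v^{(j)}\}$, plus the mixed terms involving $(L_{1/2})_\sub = i(\lambda+3\mu)t^\alpha\xi_\alpha\,\epsilon$. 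Since $a_j$ is scalar, $(a_j)_\sub$ is a scalar function; the terms linear in the arbitrary $f^{(j)}$ should cancel (reflecting that $(B_j)_\sub$'s $v^{(j)}$-component is pure phase freedom and does not affect $a_j$), and what remains should be shown to equal zero. A useful structural check is that $(a_j)_\sub$ must be real (since $a_j$ is self-adjoint), so any purely imaginary contribution must vanish identically.

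The main obstacle I anticipate is the bookkeeping of the Poisson-bracket terms: one must carefully expand $\{P^{(j)}, v^{(j)}\}$, $\operatorname{tr}((P_j)_\sub)$, and the generalized brackets, using the constraint structure $P^{(j)}v^{(j)} = v^{(j)}$, $[v^{(j)}]^* v^{(j)} = 1$, and the relations between $v^{(1)} = \epsilon p$ and $v^{(2)} = p$. The flatness of the metric helps enormously — all Christoffel symbols vanish, so the only geometric data entering are the framing-dependent quantities $\Upsilon^\alpha{}_{\beta\gamma}$ and the torsion covector $t_\alpha$ — but the computation of $\{p, p\}$, $\{p, \epsilon p\}$, and $\{h, p\}$ still requires care with the $\xi$-derivatives of $p = \xi/h$ (in the orthonormal frame). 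I would organize the calculation by first writing everything in terms of $h$, the unit covector $p$, and $\epsilon$, exploiting the two-dimensional identities $\epsilon^2 = -I$, $p^T \epsilon p = 0$, and $p p^T + (\epsilon p)(\epsilon p)^T = I$.

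Finally, I would cross-check the result against the second Weyl coefficient computation in subsection~\ref{The second Weyl coefficient}: by \eqref{subprincipal tilde Pj} the pseudodifferential projections for $\tilde L_{1/2}$ have vanishing subprincipal symbol, and combined with $(\tilde L_{1/2})_\sub = 0$ this would make the formula \eqref{second Weyl coefficient for A} for $\tilde w_{d-2}$ particularly transparent — a consistency check that the global second Weyl coefficient of $L_{1/2}$, after integration over $\mathbb{T}^2$, receives no contribution from the subprincipal-level data, matching known results for the Lamé operator on the flat torus.
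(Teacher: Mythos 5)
Your proposal follows essentially the same route as the paper: compute $(B_j)_\sub$ via Theorem~\ref{main theorem 2}, apply the triple-product identity \eqref{subprincipal symbol of composition of three operators} to $a_j=B_j^*L_{1/2}B_j$, and show the result vanishes after the arbitrary phase $f^{(j)}$ cancels. The paper's proof is made tractable by two facts your computation would uncover along the way --- $(P_j)_\sub=0$ (quoted from \cite{part1}) and $\{P^{(j)},v^{(j)}\}=0$, which collapse $(B_j)_\sub$ to $if^{(j)}v^{(j)}$ --- and your realness check for $(a_j)_\sub$ is precisely what underlies the final cancellation \eqref{calculation 5}.
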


\begin{proof}
Proving \eqref{subprincipal L diagonal} is equivalent to showing that
\begin{equation}
\label{calculation 0}
(B_j^*L_{1/2}B_j)_\sub=0, \qquad j=1,2.
\end{equation}

Let us begin by computing $(B_j)_\sub$, $j=1,2$.  It was shown in \cite{part1} that
\begin{equation}
\label{Pj sub lame}
(P_j)_\sub=0, \qquad j=1,2.
\end{equation}
Furthermore, it is not hard to see that
\begin{equation}
\label{calculation 1}
\{P^{(1)}, v^{(1)}\}=\epsilon\{P^{(2)}, v^{(2)}\}=0.
\end{equation}
On account of \eqref{Pj sub lame} and \eqref{calculation 1}, Theorem~\ref{main theorem 2} gives us
\begin{equation}
\label{Bj sub lame}
(B_j)_\sub=i f^{(j)}v^{(j)}, \qquad j=1,2.
\end{equation}
Of course, 
\begin{equation}
\label{Bj sub star lame}
(B_j^*)_\sub=[(B_j)_\sub]^*=-i f^{(j)}[v^{(j)}]^*.
\end{equation}

Now,  applying \eqref{subprincipal symbol of composition of three operators} to $B_j^*L_{1/2}B_j$ and using \eqref{Bj sub lame}, \eqref{Bj sub star lame} we obtain
\begin{equation}
\label{calculation 2}
\begin{split}
(B_j^*L_{1/2}B_j)_\sub
&
=
[v^{(j)}]^* (L_{1/2})_\sub v^{(j)}+\frac{i}2 \{[v^{(j)}]^*,  (L_{1/2})_\prin, v^{(j)}\}
\\
&
+\frac{i}2\left([v^{(j)}]^*\{(L_{1/2})_\prin, v^{(j)}\}+\{[v^{(j)}]^*,  (L_{1/2})_\prin\}v^{(j)}\right).
\end{split}
\end{equation}
Formulae \eqref{L subprincipal},  \eqref{h and p} and \eqref{eigenvectors L prin} imply
\begin{equation}
\label{calculation 3}
[v^{(j)}]^* (L_{1/2})_\sub v^{(j)}=0, \qquad j=1,2.
\end{equation}
The Spectral Theorem tells us that
\begin{equation*}
(L_{1/2})_\prin=h^2(\mu\,I+(\lambda+\mu)P^{(2)})=(\lambda+2\mu)I-(\lambda+\mu)P^{(1)}.
\end{equation*}
The latter, combined with the identities
\begin{equation*}
\{[v^{(j)}]^*,v^{(j)}\}=0, \qquad \{[v^{(j)}]^*,P^{(j)},v^{(j)}\}=0,\qquad j=1,2,
\end{equation*}
in turn gives us
\begin{equation}
\label{calculation 4}
\{[v^{(j)}]^*,  (L_{1/2})_\prin, v^{(j)}\}=0, \qquad j=1,2.
\end{equation}
Finally, we observe that
\begin{equation*}
\{(L_{1/2})_\prin,v^{(j)}\}=-\{[v^{(j)}]^*, (L_{1/2})_\prin\}^*
\end{equation*}
and
\begin{equation*}
\{[v^{(j)}]^*,  (L_{1/2})_\prin\}v^{(j)}=(\{[v^{(j)}]^*,  (L_{1/2})_\prin\}v^{(j)})^*,
\end{equation*}
so that
\begin{equation}
\label{calculation 5}
[v^{(j)}]^*\{(L_{1/2})_\prin, v^{(j)}\}+\{[v^{(j)}]^*,  (L_{1/2})_\prin\}v^{(j)}=0, \qquad j=1,2.
\end{equation}

Substituting \eqref{calculation 3}, \eqref{calculation 4} and \eqref{calculation 5} into \eqref{calculation 2} we arrive at \eqref{calculation 0}.
\end{proof}

Theorem~\ref{theorem subprincipal lame} implies that the second (global) Weyl coefficient for the operator of linear elasticity in dimension 2 vanishes, which agrees with known results \cite[Chapter~6]{SaVa}.

\section*{Acknowledgements}
\addcontentsline{toc}{section}{Acknowledgements}

I am grateful 
to Alex Strohmaier for raising questions that eventually led to this paper and for pointing out useful references;
to Jean-Claude Cuenin for helpful comments on a preliminary version of this manuscript and for bibliographic suggestions; to Grigori Rozenbloum for insightful discussions on the role of topological obstructions;
to Dmitri Vassiliev for numerous discussions on this and related topics throughout the years.

This work was supported by the Leverhulme Trust Research Project Grant RPG-2019-240, which is gratefully acknowledged.

\begin{appendices}

\section{Some useful results}
\label{Appendix A}

We collect in this appendix, for the reader's convenience, a few results from \cite{part2} that are used in the main text of the paper.

\begin{lemma}[{\cite[Lemma~3.6]{part2}}]
\label{lemma about bound on number of ev in interval}
Let $A\in\Psi^s_m$, $s>0$, be an operator as in Section~\ref{Statement of the problem}. Let
$
\mu_1\le\mu_2\le\dots\le \mu_r
$
be real numbers and let $u_k$, $k=1,\ldots,r$, be an orthonormal set in $L^2(M)$. 
Suppose that
\begin{equation*}
\label{lemma about bound on number of ev in interval equation 2}
\|(A-\mu_k)u_k\|_{L^2}\le \varepsilon, \qquad k=1,\ldots,r.
\end{equation*}
Then 
\begin{equation*}
\label{lemma about bound on number of ev in interval equation 3}
\# \{\lambda\ |\ \lambda\in \sigma(A)\cap [\mu_1-\sqrt{r}\,\varepsilon,\mu_r+\sqrt{r} \,\varepsilon] \} \ge  r.
\end{equation*}
\end{lemma}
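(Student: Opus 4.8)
The plan is to bound the number of eigenvalues of $A$ in the interval $I:=[\mu_1-\sqrt r\,\varepsilon,\mu_r+\sqrt r\,\varepsilon]$ from below by a min-max / dimension-counting argument. Let $V:=\operatorname{span}\{u_1,\dots,u_r\}\subset L^2(M)$, which is $r$-dimensional since the $u_k$ are orthonormal. Denote by $E:=\mathbf 1_{I}(A)$ the spectral projection of $A$ onto $I$ (well defined since $A$ is self-adjoint with discrete spectrum), and by $E^\perp=\mathrm{Id}_m-E$ its complement. The key claim is that $E$ is injective on $V$; granting this, $\dim E(V)=r$, and since $E(V)\subseteq\operatorname{ran}E$, the multiplicity of $\sigma(A)\cap I$ is at least $r$, which is exactly the assertion.

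To prove injectivity of $E$ on $V$, I would first estimate $\|E^\perp u\|_{L^2}$ for a unit vector $u=\sum_k c_k u_k\in V$. Using the hypothesis $\|(A-\mu_k)u_k\|_{L^2}\le\varepsilon$ and the triangle inequality,
\begin{equation*}
\Bigl\|\sum_k c_k (A-\mu_k)u_k\Bigr\|_{L^2}\le \sum_k |c_k|\,\varepsilon\le \sqrt r\,\varepsilon\,\Bigl(\sum_k|c_k|^2\Bigr)^{1/2}=\sqrt r\,\varepsilon,
\end{equation*}
by Cauchy--Schwarz and $\|u\|_{L^2}=1$. Now pick any $\mu\in[\mu_1,\mu_r]$ (the left endpoint of the min, or its midpoint — the precise choice is immaterial); since $\mu_1\le\mu_k\le\mu_r$, one has $\|\sum_k c_k(\mu-\mu_k)u_k\|_{L^2}\le(\mu_r-\mu_1)\le \sqrt r\,\varepsilon$ is \emph{not} automatic, so instead I would bound $\|(A-\mu)u\|_{L^2}$ by splitting $A-\mu=\sum_k c_k(A-\mu_k)u_k\langle\cdot\rangle$-style: more carefully, $(A-\mu)u=\sum_k c_k(A-\mu_k)u_k+\sum_k c_k(\mu_k-\mu)u_k$, whence, using that $|\mu_k-\mu|\le\mu_r-\mu_1$ is too crude — rather one notes the second sum has $L^2$-norm $\le(\mu_r-\mu_1)$ only if we are sloppy. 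The clean route is to take $\mu$ to be the point of $I$ one is testing against and use the spectral theorem directly: for any $v\in L^2(M)$, $\|E^\perp v\|_{L^2}\le \operatorname{dist}(\mu_0,\sigma(A)\setminus I)^{-1}\|(A-\mu_0)E^\perp v\|_{L^2}$ is again not quite it. I will instead use the standard fact that if $\|(A-\mu_k)u_k\|\le\varepsilon$ then $\|E^\perp u_k\|\le \varepsilon/\operatorname{dist}(\mu_k,\sigma(A)\setminus I)$, and since $\operatorname{dist}(\mu_k,\mathbb R\setminus I)\ge\sqrt r\,\varepsilon$ by construction of $I$, we get $\|E^\perp u_k\|\le 1/\sqrt r$. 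Hence for a unit $u=\sum c_k u_k\in V$, $\|E^\perp u\|\le\sum|c_k|/\sqrt r\le 1$, with equality forced only in degenerate cases; tightening this (e.g. $\|E^\perp u\|\le \sqrt{\sum\|E^\perp u_k\|^2}\cdot\dots$ via orthonormality is not valid since $E^\perp$ need not preserve orthogonality, so one argues $\|E^\perp u\|^2\le r\sum|c_k|^2\cdot(1/r)=1$ still only gives $\le 1$) shows $E^\perp$ is a contraction on $V$. To get \emph{strict} contraction — needed for injectivity of $E$ — one replaces $\sqrt r\,\varepsilon$ by a slightly larger radius in an auxiliary interval and takes a limit, or, as in \cite{part2}, one observes that it suffices to have $\|E^\perp u\|<\|u\|$ for $u\ne0$, which a more careful Cauchy--Schwarz (noting the $|c_k|$ cannot all simultaneously saturate) delivers; alternatively one enlarges $I$ infinitesimally and uses semicontinuity of the counting function.

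The main obstacle I anticipate is precisely this last point: converting the contraction estimate $\|E^\perp u\|\le\|u\|$ on $V$ into the \emph{injectivity} of $E|_V$, i.e. ruling out the boundary case $\|E^\perp u\|=\|u\|$ (equivalently $Eu=0$) for some nonzero $u\in V$. The cleanest fix is to prove the statement first with the closed interval $I$ replaced by the half-open or slightly dilated interval $I_\delta:=[\mu_1-\sqrt r\,\varepsilon-\delta,\mu_r+\sqrt r\,\varepsilon+\delta]$, for which the estimate becomes strict, conclude $\#(\sigma(A)\cap I_\delta)\ge r$, and then let $\delta\downarrow 0$ using the fact that $\sigma(A)$ is discrete so $\#(\sigma(A)\cap I_\delta)=\#(\sigma(A)\cap I)$ for all sufficiently small $\delta>0$. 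With that, $\dim E(V)=r$ and the lemma follows.
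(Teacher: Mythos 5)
Your argument is correct, and the spectral-projection strategy --- pass to $E:=\mathbf{1}_I(A)$ with $I=[\mu_1-\sqrt r\,\varepsilon,\mu_r+\sqrt r\,\varepsilon]$, show $E|_V$ is injective on $V:=\operatorname{span}\{u_k\}$ via a resolvent estimate and Cauchy--Schwarz, and conclude $\operatorname{rank}E\ge\dim V=r$ --- is the standard way to prove a statement of this type and is the route taken in the cited source. Your $\delta$-dilation fix of the strictness issue is also valid: on $I_\delta$ one gets $\|E_\delta^\perp u_k\|\le\varepsilon/(\sqrt r\,\varepsilon+\delta)<1/\sqrt r$, hence strict contraction, hence injectivity of $E_\delta|_V$, hence $\#(\sigma(A)\cap I_\delta)\ge r$; discreteness of $\sigma(A)$ then lets you send $\delta\downarrow0$.

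One streamlining worth noting: the dilation can be dispensed with entirely. In the estimate $\|E^\perp u_k\|\le\varepsilon/\operatorname{dist}(\mu_k,\sigma(A)\setminus I)$ the denominator is the distance to $\sigma(A)\setminus I$, not to $\mathbb R\setminus I$. Because $I$ is \emph{closed} and $\mu_k\in[\mu_1,\mu_r]$ lies at distance $\ge\sqrt r\,\varepsilon$ from $\partial I$, every eigenvalue $\lambda\in\sigma(A)\setminus I$ satisfies $|\lambda-\mu_k|>\sqrt r\,\varepsilon$ \emph{strictly}, and discreteness of $\sigma(A)$ gives a uniform bound $\min_{k}\operatorname{dist}(\mu_k,\sigma(A)\setminus I)>\sqrt r\,\varepsilon$. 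Thus $\|E^\perp u_k\|<1/\sqrt r$ for each $k$, and for a unit vector $u=\sum_k c_k u_k\in V$ the Cauchy--Schwarz step yields
\begin{equation*}
\|E^\perp u\|\le\sum_k|c_k|\,\|E^\perp u_k\|<\frac{1}{\sqrt r}\sum_k|c_k|\le\Bigl(\sum_k|c_k|^2\Bigr)^{1/2}=1,
\end{equation*}
so $Eu\ne0$ and $E|_V$ is injective outright, with no limiting argument needed. This removes the one genuine wrinkle you identified, and the rest of your proof goes through as written.
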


\begin{lemma}[{\cite[Lemma~3.9]{part2}}]
\label{lemma about orthonormalisation}
Let $F$ be an Hermitian $r\times r$ matrix such that
\begin{equation}
\label{lemma about orthonormalisation equation 1}
\|F-I\|_{\max}\le \frac{1}{3r^2}\,,
\end{equation}
where $I$ is the $r\times r$ identity matrix and $\|F\|_{\max}:=\max_{1\le j,k \le r}|F_{jk}|$ is the max matrix norm. 
Then there exists an Hermitian matrix $G$ such that
\begin{equation}
\label{lemma about orthonormalisation equation 2}
GFG=I
\end{equation}
and
\begin{equation}
\label{lemma about orthonormalisation equation 3}
\|G-I\|_{\max}\le \|F-I\|_{\max}.
\end{equation}
\end{lemma}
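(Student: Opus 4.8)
The plan is to take $G:=F^{-1/2}$, the Hermitian positive-definite inverse square root of $F$, realised through the binomial series, and to estimate $G-I$ summand by summand in the max norm.

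First I would write $F=I+E$ with $E:=F-I$. By \eqref{lemma about orthonormalisation equation 1} the matrix $E$ is Hermitian and $\|E\|_{\max}\le\frac{1}{3r^{2}}$. Since for a Hermitian matrix $\|E\|_{\mathrm{op}}\le r\,\|E\|_{\max}$ (e.g.\ by Gershgorin, or via $\|E\|_{\mathrm{op}}^{2}\le\|E\|_{1}\|E\|_{\infty}$), we get $\|E\|_{\mathrm{op}}\le\frac{1}{3r}<1$; hence $F$ is positive definite and
\begin{equation*}
G:=\sum_{n=0}^{\infty}\binom{-1/2}{n}E^{n}
\end{equation*}
converges in operator norm. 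Every summand is Hermitian with a real coefficient, so $G$ is Hermitian; moreover $G$ is a power series in $E$, hence commutes with $F=I+E$, and the scalar identity $\bigl((1+z)^{-1/2}\bigr)^{2}=(1+z)^{-1}$, valid inside the common disc of convergence, promotes to $G^{2}=(I+E)^{-1}$. Therefore $GFG=G^{2}F=(I+E)^{-1}(I+E)=I$, which is \eqref{lemma about orthonormalisation equation 2}.

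It then remains to prove \eqref{lemma about orthonormalisation equation 3}, and here I would use two elementary facts. First, the max norm is submultiplicative up to a factor $r$: since $|(AB)_{ik}|=\bigl|\sum_{j}A_{ij}B_{jk}\bigr|\le r\,\|A\|_{\max}\|B\|_{\max}$, one has $\|AB\|_{\max}\le r\,\|A\|_{\max}\|B\|_{\max}$ and, iterating, $\|E^{n}\|_{\max}\le r^{\,n-1}\|E\|_{\max}^{\,n}$. Second, $\bigl|\binom{-1/2}{n}\bigr|=\binom{2n}{n}4^{-n}\le\tfrac12$ for every $n\ge1$, because this sequence equals $\tfrac12$ at $n=1$ and is decreasing (the ratio of consecutive terms is $\frac{2n+1}{2n+2}<1$). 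Writing $t:=r\,\|E\|_{\max}\le\frac{1}{3r}\le\frac13$ and combining these bounds,
\begin{equation*}
\|G-I\|_{\max}\le\sum_{n=1}^{\infty}\Bigl|\binom{-1/2}{n}\Bigr|\,r^{\,n-1}\|E\|_{\max}^{\,n}\le\frac{\|E\|_{\max}}{2}\sum_{n=1}^{\infty}t^{\,n-1}=\frac{\|E\|_{\max}}{2}\cdot\frac{1}{1-t}\le\frac34\,\|E\|_{\max},
\end{equation*}
which is stronger than \eqref{lemma about orthonormalisation equation 3}.

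The only genuinely delicate point is that the closing estimate must carry the factor $\|E\|_{\max}=\|F-I\|_{\max}$ on the right, not merely a small absolute constant; this forces one to peel a single power of $\|E\|_{\max}$ off the whole tail \emph{before} summing the geometric series, and it is exactly the $r^{-2}$ strength of \eqref{lemma about orthonormalisation equation 1}, set against the factor $r$ lost at each matrix product, that makes $t\le\frac13$ and renders the remaining series harmless. Checking the convergence and Hermiticity of $G=F^{-1/2}$ is a routine preliminary.
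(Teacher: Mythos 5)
Your argument is correct: taking $G=F^{-1/2}$ via the binomial series in $E=F-I$, checking Hermiticity and $G^2F=I$ through the operator-norm bound $\|E\|_{\mathrm{op}}\le r\|E\|_{\max}\le\frac{1}{3r}<1$, and then peeling one factor of $\|E\|_{\max}$ off the tail with $\|E^{n}\|_{\max}\le r^{\,n-1}\|E\|_{\max}^{n}$ and $\bigl|\binom{-1/2}{n}\bigr|\le\frac12$ gives $\|G-I\|_{\max}\le\frac34\|F-I\|_{\max}$, which is even sharper than \eqref{lemma about orthonormalisation equation 3}. The present paper does not reprove this lemma but quotes it from \cite{part2}, where the argument is essentially the same (the Hermitian inverse square root expanded as a power series, with exactly this kind of max-norm bookkeeping exploiting the $r^{-2}$ strength of \eqref{lemma about orthonormalisation equation 1}), so your proposal matches the intended route.
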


Let $A\in\Psi^s_m$, $s>0$, be an operator as in Section~\ref{Statement of the problem}
and let
\begin{equation*}
\label{Weyl asymptotics for elliptic systems counting function definition}
N(\lambda):=
\begin{cases}
0 & \text{for}\ \lambda\le 0,\\
\sum_{k\,:\,0<\lambda_k<\lambda}1 &\text{for}\ \lambda> 0\\
\end{cases}
\end{equation*}
be its (positive) counting function.

\begin{theorem}[Weyl's law for elliptic systems {\cite[Theorem~B.1]{part2}}]
\label{Weyl asymptotics for elliptic systems theorem}
We have
\begin{equation*}
\label{Weyl asymptotics for elliptic systems theorem equation 1}
N(\lambda)=
b\lambda^{d/s}+o(\lambda^{d/s})
\quad\text{as}\quad\lambda\to+\infty,
\end{equation*}
where
\begin{equation*}
\label{Weyl asymptotics for elliptic systems theorem equation 2}
b=
\frac{1}{(2\pi)^d}\,\sum_{j=1}^{m^+}
\ \int\limits_{h^{(j)}(x,\xi)<1}\operatorname{dVol}_{T^*M}.
\end{equation*}
\end{theorem}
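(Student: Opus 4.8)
The plan is to reduce the system-level Weyl law to the classical Weyl law for \emph{scalar} elliptic self-adjoint pseudodifferential operators --- precisely the point of view anticipated in subsection~\ref{The second Weyl coefficient} --- by means of the approximate diagonalization furnished by Theorems~\ref{main theorem 3} and~\ref{main theorem 5}. This is not circular: the proof of Theorem~\ref{main theorem 5} only uses the scalar Weyl law, so the argument genuinely trades one matrix problem for $m^+$ scalar ones. A self-contained route not relying on diagonalization is indicated at the end.

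For the scalar input I would invoke the classical fact \cite{SaVa} that a scalar elliptic self-adjoint $a\in\Psi^s_1$, $s>0$, with principal symbol $h$ positive on $T'M$ --- hence semibounded from below --- has positive counting function $N_a(\lambda)=(2\pi)^{-d}\operatorname{vol}\{(x,\xi):h(x,\xi)<\lambda\}+o(\lambda^{d/s})$. By the substitution $\xi\mapsto\lambda^{1/s}\xi$ together with the homogeneity of $h$ of degree $s$ one has $\operatorname{vol}\{h<\lambda\}=\lambda^{d/s}\operatorname{vol}\{h<1\}$. Applying this to each $a_j$ with $j=1,\dots,m^+$, for which $(a_j)_\prin=h^{(j)}>0$ by \eqref{operators aj}, gives $N_{a_j}(\lambda)=(2\pi)^{-d}\lambda^{d/s}\int_{h^{(j)}<1}\operatorname{dVol}_{T^*M}+o(\lambda^{d/s})$, where I use that the symplectic volume $dx\,d\xi$ on $T^*M$ is the invariant density $\operatorname{dVol}_{T^*M}$. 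For $j<0$ one has $(a_j)_\prin=h^{(j)}<0$, so $a_j$ has only finitely many positive eigenvalues.

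The next step transfers this to $A$. Since the combined sequence $\{\zeta_k\}$ of \eqref{spectra of a_j combined} is, up to finitely many eigenvalues, the positive spectrum of $\tilde A=\operatorname{diag}(a_{m^+},\dots,a_{-m^-})$, its counting function $\tilde N(\lambda):=\#\{k:\zeta_k<\lambda\}$ obeys $\tilde N(\lambda)=\sum_{j=1}^{m^+}N_{a_j}(\lambda)+O(1)$, so it remains to prove $N(\lambda)=\tilde N(\lambda)+O(1)$. Choosing $\alpha=1$ in Theorem~\ref{main theorem 5} gives $\lambda_k=\zeta_{k+z_1}+O(k^{-1})$, but to turn this into an estimate on counting functions one must pass through the spectral-gap structure already exploited in the proof of Theorem~\ref{theorem about closeness of eigenvalues aj vs Aj}, namely the partition of $(0,+\infty)$ from \cite[Subsec.~3.2.1]{part2} whose endpoints stay at distance $\gtrsim\nu_n^{-\gamma}$ from both $\sigma^+(A)$ and $\sigma^+(\tilde A)$. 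Along that partition $N$ and $\tilde N$ increase by the same amount in each interval (by Theorem~\ref{theorem about closeness of eigenvalues aj vs Aj} together with \cite[Thm.~2.3]{part2}), so $|N(\lambda)-\tilde N(\lambda)|$ is bounded by the number of $\zeta_k$ lying in the partition interval containing $\lambda$, which by Weyl's law for the $a_j$ and \eqref{estimate on size of partition} tends to $0$, hence is $O(1)$. Combining the three estimates yields $N(\lambda)=b\lambda^{d/s}+o(\lambda^{d/s})$ with $b$ as stated.

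The hard part is this last bookkeeping: Theorem~\ref{main theorem 5} only controls individually-indexed eigenvalues with a power-law error, and converting that into an $o(\lambda^{d/s})$ statement about $N(\lambda)$ forces one to handle multiplicities and spectral clusters carefully --- which is precisely why the weaker distance statement of Theorem~\ref{main theorem 4}(a) alone would not pin down the constant $b$, and why the spectral-gap partition is needed. A self-contained alternative would be the heat-trace method: set $\Pi^+:=\sum_{j=1}^{m^+}P_j\in\Psi^0_m$, which by Theorem~\ref{theorem results from part 1} is an approximate projection commuting with $A$ and satisfying $\Pi^+A\Pi^+\ge0\mod\Psi^{-\infty}_m$; since the positive spectral subspace of $A$ agrees with $\operatorname{Ran}\Pi^+$ modulo a smoothing operator, $\sigma^+(A)$ coincides, up to finitely many eigenvalues, with the spectrum of a positive elliptic self-adjoint $A_+\in\Psi^s_m$. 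One then builds a parametrix for $e^{-tA_+}$, obtains $\operatorname{Tr}(e^{-tA_+})=(2\pi)^{-d}\int_{T^*M}\operatorname{tr}\bigl(e^{-t(A_+)_\prin(x,\xi)}\bigr)\,dx\,d\xi+o(t^{-d/s})$ with $\operatorname{tr}\bigl(e^{-t(A_+)_\prin}\bigr)=\sum_{j=1}^{m^+}e^{-th^{(j)}}$, evaluates $\int e^{-th^{(j)}(x,\xi)}\,d\xi=t^{-d/s}\Gamma(\tfrac{d}{s}+1)\operatorname{vol}\{h^{(j)}<1\}$ by the layer-cake formula and homogeneity, and concludes via Karamata's Tauberian theorem \cite{SaVa}; here the obstruction shifts to localizing the heat trace to $\operatorname{Ran}\Pi^+$ (recall $A$ is not semibounded) and to controlling the subleading terms of the parametrix.
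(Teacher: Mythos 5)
This statement is not proved in the paper at all: it is Theorem~B.1 of the companion paper \cite{part2}, reproduced verbatim in the appendix ``for the reader's convenience'' because it is \emph{used} in the proofs of Theorems~\ref{theorem about closeness of spectra aj vs Aj} and~\ref{theorem about closeness of eigenvalues aj vs Aj}. So there is no in-paper argument to compare against, and the correct answer to ``how does the paper prove this?'' is ``it doesn't; it cites \cite{part2}''.

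More importantly, your first route is circular, and the sentence you offer to ward this off (``the proof of Theorem~\ref{main theorem 5} only uses the scalar Weyl law'') is not correct. Trace the dependency chain: Theorem~\ref{main theorem 5} rests on Theorem~\ref{theorem about closeness of eigenvalues aj vs Aj}, whose proof explicitly invokes Weyl's law to bound $r=N_j(n)=O(n^{\beta d/s})$ --- and $N_j(n)$ counts eigenvalues of the \emph{matrix} operator $A_j\in\Psi^s_m$ from \eqref{definition of Aj}, not of the scalar $a_j$. Likewise, the partition of $(0,+\infty)$ from \cite[Subsec.~3.2.1]{part2} with gaps $\gtrsim n^{-\gamma}$ separating the endpoints from $\sigma^+(A_j)$, and formula \eqref{proof theorem about closeness of eigenvalues aj vs Aj equation 21} $\nu_n = c\,k^{s/d}+o(k^{s/d})$ for $\lambda^{(j)}_k\in(\nu_n,\nu_{n+1}]$, both require a Weyl-type estimate for the matrix operators $A_j$. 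Deriving the systems Weyl law from Theorem~\ref{main theorem 5} therefore uses the systems Weyl law. To rescue the argument one would have to show that every such use can be replaced by a cruder, independently provable two-sided polynomial bound on the counting functions of the $A_j$ (e.g.\ from ellipticity and a variational argument) --- plausible, but this is a genuine gap as stated, not a footnote.

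Your heat-trace alternative is the right self-contained route and is essentially what one finds in the literature for Weyl's law for elliptic systems. The technical obstacle you correctly identify --- $A$ is not semibounded, so one must first pass to the semibounded operator $A_+$ with $\sigma(A_+)=\sigma^+(A)$ up to finitely many eigenvalues, using $\Pi^+:=\sum_{j>0}P_j$ and Theorem~\ref{theorem results from part 1}(b) --- is precisely where the pseudodifferential projections earn their keep. Once $A_+$ is in hand, the parametrix for $e^{-tA_+}$, the identity $\operatorname{tr}\bigl(e^{-t(A_+)_\prin}\bigr)=\sum_{j=1}^{m^+}e^{-th^{(j)}}$, the homogeneity computation $\int e^{-th^{(j)}}\,d\xi=t^{-d/s}\Gamma(\tfrac{d}{s}+1)\operatorname{vol}\{h^{(j)}<1\}$, and Karamata give the constant $b$ as claimed. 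This second sketch, with the semibounded reduction spelled out, would be a defensible proof; the first, as written, is not.
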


\end{appendices}


\begin{thebibliography}{52}
\addcontentsline{toc}{section}{References}

%\bibitem{asymm1}
%M.~F.~Atiyah, V.~K.~Patodi and I.~M.~Singer, 
%Spectral asymmetry and Riemannian geometry. 
%{\it Bull. London Math. Soc.} \textbf{5} (1973) 229--234.
%
%\bibitem{asymm2}
%M.~F.~Atiyah, V.~K.~Patodi and I.~M.~Singer, 
%Spectral asymmetry and Riemannian geometry I. 
%{\it Math. Proc. Camb. Phil. Soc.} \textbf{77} (1975) 43--69.
%
%\bibitem{asymm3}
%M.~F.~Atiyah, V.~K.~Patodi and I.~M.~Singer, 
%Spectral asymmetry and Riemannian geometry II. 
%{\it Math. Proc. Camb. Phil. Soc.} \textbf{78} (1975) 405--432.
%
%\bibitem{asymm4}
%M.~F.~Atiyah, V.~K.~Patodi and I.~M.~Singer, 
%Spectral asymmetry and Riemannian geometry III. 
%{\it Math. Proc. Camb. Phil. Soc.} \textbf{79} (1976) 71--99.
%
%\bibitem{avakumovic}
%V.~G.~Avakumovic, 
%{\"U}ber die Eigenfunktionen auf geschlossenen Riemannschen Mannigfaltigkeiten, 
%{\em Math. Z.} \textbf{65} (1956) 327--344.
%
%\bibitem{AFV}
%Z.~Avetisyan, Y.-L.~Fang and D.~Vassiliev,
%Spectral asymptotics for first order systems,
%{\it J.~Spectr.~Theory} \textbf{6} no.~4 (2016) 695--715. 

%
%\bibitem{spin1}
%Z.~Avetisyan, Y.-L.~Fang, N.~Saveliev and D.~Vassiliev,
%Analytic definition of spin structure.
%{\it J.~Math.~Phys.} \textbf{58} (2017), 082301.
%
\bibitem{ASV}
Z.~Avetisyan, J.~Sj\"ostrand and D.~Vassiliev,
The second Weyl coefficient for a first order system,
in: {\it Analysis as a tool in mathematical physics}, P.~Kurasov, A.~Laptev, S.~Naboko and B.~Simon (Eds.), {\it Operator Theory: Advances and Applications} \textbf{276} Birkh\"auser Verlag (2020) 120--153.

\bibitem{flow}
V.~Bach and J.-B.~Bru,
Rigorous foundations of the Brockett--Wegner flow for operators,
{\it J.  Evol.  Equ.} {\bf 10} (2010) 425--442.

%\bibitem{Bar3}
%C.~B\"ar, 
%Metrics with harmonic spinors,
%{\it Geom.~Funct.~Anal.} \textbf{6} (1996) 899--942.
%
%
%\bibitem{Bar1}
%C.~B\"ar, 
%The Dirac operator on space forms of positive curvature, {\it J.~Math.~Soc.~Japan} \textbf{48} (1996) 69--83.
%
%\bibitem{Bar2}
%C.~B\"ar, 
%Dependence of the Dirac spectrum on the spin structure, 
%{\it S{\'e}min. Congr.} \textbf{4} (2000) 17--33.
%
%\bibitem{battistotti}
%P.~Battistotti, 
%{\it An invariant approach to symbolic calculus for pseudodifferential operators on manifolds}, 
%PhD thesis, King's College London (2015).


%\bibitem{baum}
%P.~Baum and R.~G.~Douglas,
%Toeplitz operators and Poincare duality, 
%in: {\it Toeplitz Centennial}, I.~Gohberg (Eds.), {\it Operator Theory: Advances and Applications} {\bf 4} Birkh\"auser, Basel (1981) 137--166.
%

%\bibitem{birman1}
%M.~Sh.~Birman and M.~Z.~Solomyak,
%Asymptotic behavior of the spectrum of pseudodifferential operators with anisotropically homogeneous symbols, (Russian) {\it Vestnik Leningrad. Univ. Mat. Mekh. Astronom.} {\bf 13} no.~3 (1977) 13--21. English translation: {\it Vestn. Leningr. Univ. Math.} {\bf 10} (1982) 237--247.
%
%\bibitem{birman2}
%M.~Sh.~Birman and M.~Z.~Solomyak,
%Asymptotic behavior of the spectrum of pseudodifferential operators with anisotropically homogeneous symbols II, (Russian) {\it Vestnik Leningrad. Univ. Mat. Mekh. Astronom.} {\bf 13} no.~3 (1979) 5--10. English translation: {\it Vestn. Leningr. Univ. Math.} {\bf 12} (1980) 155--161.

%\bibitem{birman}
%M.~Sh.~Birman and M.~Z.~Solomyak,
%On subspaces that admit a pseudodifferential projector, (Russian) {\it Vestnik Leningrad. Univ. Mat. Mekh. Astronom.} \textbf{82} no.~1 (1982) 18--25. English translation: {\it Vestnik Leningrad. Univ. Mat. Mekh. Astronom.} \textbf{15} (1982) 17--27. 

%\bibitem{bismut}
%J.-M.~Bismut and D.~S.~Freed, 
%The analysis of elliptic families. II. Dirac operators, eta invariants, and the holonomy theorem, 
%{\it Comm. Math. Phys.} \textbf{107} (1986) 103--163.
%

\bibitem{blount}
E.~I.~Blount,
Extension of the Foldy-Wouthuysen Transformation, 
{\it Phys.  Rev. } {\bf 128} (1962) 2454–2458.

\bibitem{bolte}
J.~Bolte and R.~Glaser, 
Semiclassical Egorov theorem and quantum ergodicity for matrix valued operators,
{\it Comm.~Math.~ Phys. } {\bf 247} (2004) 391--419.

%\bibitem{woj3}
%B.~Booss-Bavnbek and K.~P.~Wojciechowski,
%Desuspension of splitting elliptic symbols. I. 
%{\it Ann.~Global Anal.~Geom.} \textbf{3} no.~3 (1985) 337--383. 
%
%\bibitem{woj4}
%B.~Booss-Bavnbek and K.~P.~Wojciechowski,
%Desuspension of splitting elliptic symbols. II. 
%{\it Ann.~Global Anal.~Geom.} \textbf{4} no.~3 (1986) 349--400. 
%
%\bibitem{woj5}
%B.~Booss-Bavnbek and K.~P.~Wojciechowski,
%Pseudo-differential projections and the topology of certain spaces of elliptic boundary value problems,
%{\it Comm.~Math.~Phys.} \textbf{121} no.~1 (1989) 1--9. 

%\bibitem{bourguignon}
%J.-P.~Bourguignon and P.~Gauduchon, 
%Spineurs, op{\'e}rateurs de dirac et variations de m{\'e}triques, {\it Comm.~Math.~Phys.} \textbf{144} (1992) 581--599.
%
%\bibitem{branson}
%T.~P.~Branson and P.~B.~Gilkey, Residues of the eta function for an operator
%of Dirac type,
%{\it J.~Funct.~Anal.} \textbf{108} (1992) 47--87.
%

\bibitem{brummelhuis}
 R.~Brummelhuis and J.~Nourrigat,
 Scattering amplitude for dirac operators, 
 {\it Comm.  Partial Differential Equations} {\bf 24} no.~1-2 (1999) 377--394.

\bibitem{BR99}
V.~Bruneau and D.~Robert,
Asymptotics of the scattering phase for the Dirac operator: High energy, semi-classical and non-relativistic limits,
{\it Ark.  Mat. } {\bf 37} (1999) 1--32.

\bibitem{lorentzian}
M.~Capoferri, C.~Dappiaggi and N.~Drago,
Global wave parametrices on globally hyperbolic spacetimes,
{\it J.~Math.~Anal.~Appl.} {\bf 490} (2020) 124316.

\bibitem{obstructions}
M.~Capoferri,  G.~Rozenbloum, N.~Saveliev and D.~Vassiliev,
Topological obstructions to the diagonalization of elliptic systems.
In preparation.

\bibitem{wave}
M.~Capoferri, M.~Levitin and D.~Vassiliev,
Geometric wave propagator on Riemannian manifolds.
Preprint arXiv:1902.06982 (2019), to appear in {\it Comm. Anal. Geom.}

\bibitem{sesqui}
M.~Capoferri, N.~Saveliev and D.~Vassiliev,
Classification of first order sesquilinear forms,
{\it Rev.~Math.~Phys.} \textbf{32} (2020) 2050027.

\bibitem{diffeo}
M.~Capoferri and D.~Vassiliev,
Spacetime diffeomorphisms as matter fields,
{\it J. Math. Phys.} {\bf 61} (2020) 111508.

\bibitem{dirac}
M.~Capoferri and D.~Vassiliev,
Global propagator for the massless Dirac operator and spectral asymptotics.
Preprint arXiv:2004.06351 (2020).

\bibitem{part1}
M.~Capoferri and D.~Vassiliev,
Invariant subspaces of elliptic systems I: pseudodifferential projections.
Preprint arXiv:2103.14325 (2021).

\bibitem{part2}
M.~Capoferri and D.~Vassiliev,
Invariant subspaces of elliptic systems II: Spectral theory,
{\it J.~Spectr.~Theory},  to appear.

\bibitem{CDV}
O.~Chervova, R.~J.~Downes and D.~Vassiliev,
The spectral function of a first order elliptic system,
{\it J. Spectr. Theory} \textbf{3} no.~3 (2013) 317--360. 

%\bibitem{jst_part_b}
%O.~Chervova, R.~J.~Downes and D.~Vassiliev,
%Spectral theoretic characterization of the massless Dirac operator.
%{\it J.~London Math.~Soc.} \textbf{89} (2014) 301--320.
%

\bibitem{cordes3}
H.~O.~Cordes, 
A version of Egorov’s theorem for systems of hyperbolic pseudo-differential equations,
{\it J. Funct. Anal.} {\bf 48} no.~3 (1982) 285--300.

\bibitem{cordes}
H.~O.~Cordes, 
A pseudodifferential-Foldy-Wouthuysen transform,
{\it Comm.  Partial Differential Equations} {\bf 8} (1983) 1475--1485.

\bibitem{cordes2}
H.~O.~Cordes, 
A precise pseudodifferential Foldy-Wouthuysen transform for the Dirac equation,
{\it J. Evol. Equ.}  {\bf 4} (2004) 125--138.

\bibitem{cuenin}
J.-C.~Cuenin,
Block-diagonalization of operators with gaps,  with applications to Dirac operators,
{\it Rev. Math. Phys.} {\bf 24} no.~8 (2012) 1250021.

%\bibitem{torus}
%R.~J.~Downes, M.~Levitin and D.~Vassiliev,
%Spectral asymmetry of the massless Dirac operator on a 3-torus, 
%{\it J.~Math.~Phys.} \textbf{54} (2013).

\bibitem{DuGu}
J.~J.~Duistermaat and V.~W.~Guillemin,
The spectrum of positive elliptic operators and periodic bicharacteristics,
{\it Invent. Math.} \textbf{29}  no.~1 (1975) 39--79.

\bibitem{DuHo}
J.~J.~Duistermaat and L.~H\"ormander,
Fourier integral operators. II.,
{\it Acta Math.} \textbf{128} no.~3--4 (1972) 183--269.

%\bibitem{sphere}
%Y.-L.~Fang, M.~Levitin and D.~Vassiliev,
%Spectral analysis of the Dirac operator on a 3-sphere,
%{\it Operators and Matrices} \textbf{12} (2018) 501--527.
%

\bibitem{spin1}
Z.~Avetisyan, Y.-L.~Fang, N.~Saveliev and D.~Vassiliev,
Analytic definition of spin structure,
{\it J.~Math.~Phys.} {\bf 58} no.~8 (2017) 082301.

%\bibitem{israel}
%Y.-L.~Fang and D.~Vassiliev,
%Analysis of first order systems of partial differential equations.
%In
%{\it Complex Analysis and Dynamical Systems VI: Part~1:
%PDE, Differential Geometry, Radon Transform}.
%AMS Contemporary Mathematics series
%\textbf{653} (2015), 163--176.
%

\bibitem{foldy}
L.~L.~Foldy,
The electromagnetic properties of the Dirac particles,
{\it Phys. Rev.} {\bf 87} no.~5 (1952) 688--693.

\bibitem{FH50}
L.~L.~Foldy and S.~A.~Wouthuysen,
On the Dirac theory of spin 1/2 particles and its non-relativistic limit,
{ \it Phys. Rev. }{\bf 78} no.~1 (1950) 29--36.

%\bibitem{friedrich}
%T.~Friedrich,
%{\it Dirac Operators in Riemannian Geometry},
%Graduate Studies in Mathematics \textbf{25}, American Mathematical Society (2000).
%
\bibitem{gerard1}
C.~G\'erard and M.~Wrochna, 
Analytic Hadamard states,  Calder\'on projectors and Wick rotation near analytic Cauchy surfaces,
{\it Comm.  Math.  Phys.} {\bf 366} (2019) 29--65. 

\bibitem{gerard2}
C.~G\'erard and M.~Wrochna, 
The massive Feynman propagator on asymptotically Minkowski spacetimes,
{\it Amer.  J.  Math.} {\bf 141} no.~6 (2019) 1501--1546.

\bibitem{gerard3}
C.~G\'erard and M.~Wrochna, 
The massive Feynman propagator on asymptotically Minkowski spacetimes II,
{\it Int. Math. Res. Not.} {\bf 2020} no.~20 (2020) 6856--6870.

%\bibitem{gilkey}
%P.~B.~Gilkey, 
%The residue of the global $\eta$ function at the origin,
%{\it Adv. Math.} \textbf{40} (1981) 290--307.
% 
%\bibitem{gohberg}
%I.~Gohberg and M.~G.~Krein, 
%Systems of integral equations on the half-line with kernels depending on the difference of the arguments, 
%{\it Uspekhi Matem. Nauk} {\bf 13} no.~2 (1958) 3--72.

%
%\bibitem{heath-brown}
%D.~R.~Heath-Brown, 
%Lattice Points in the Sphere,
%{\it Number Theory in Progress} Vol.~2, de Gruyter, Berlin (1999) 883--892.
%
%
\bibitem{tretter}
H.~Langer and C.~Tretter, 
Diagonalization of certain block operator matrices and applications to Dirac operators.  
In: {\it Operator Theory and Analysis}, H.~Bart,  A.~C.~M.~Ran and I.~Gohberg (Eds.), {\it Oper.  Theory Adv.  Appl.} {\bf 122}, Birkh\"auser, Basel (2001) 331--358.

\bibitem{helffer_sj}
B.~Helffer and J.~Sj\"ostrand,
Analyse semi-classique pour l’\'equation de Harper. II : comportement semi-classique pr\`es d'un rationnel,
{\it M\'em. Soc. Math. Fr. } {\bf 40} (1990).

\bibitem{application1}
B.~A.~Hess, M.~Reiher and A.~Wolf,
The generalized Douglas-Kroll transformation,
{\it J. Chem. Phys.} {\bf 117} no.~20 (2002) 9215--9226.

%\bibitem{Hor}
%L.~H\"ormander,
%{\it The analysis of linear partial differential operators. I}.
%Reprint of the second (1990) edition. Classics in Mathematics. Springer-Verlag, Berlin, 2003;
%{\it III}. Reprint of the 1994 edition. Classics in Mathematics. Springer-Verlag, Berlin, 2007; {\it IV}. Reprint of the 1994 edition. Classics in Mathematics. Springer-Verlag, Berlin, 2009.

%\bibitem{Horn}
%R.~A.~Horn and C.~R.~Johnson,
%{\it Matrix Analysis} (2nd Edition),
%Cambridge University Press, 2012.
%
\bibitem{Ivr80}
V.~Ivrii,
Second term of the spectral asymptotic expansion of the Laplace--Beltrami operator on manifolds with boundary,
{\it Funct. Anal. Appl.} \textbf{14} (1980) 98--106.


%\bibitem{Ivr82}
%V.~Ivrii,
%Accurate spectral asymptotics for elliptic operators that act in vector bundles
%{\it Funct. Anal. Appl.} \textbf{16} (1982) 101--108.


\bibitem{Ivr84}
V.~Ivrii,
{\it Precise spectral asymptotics for elliptic operators
acting in fiberings over manifolds with boundary},
Lecture Notes in Mathematics \textbf{1100}, Springer-Verlag, Berlin, 1984.

\bibitem{Ivr98}
V.~Ivrii,
{\it Microlocal analysis and precise spectral asymptotics},
Springer-Verlag, Berlin, 1998.

%
%\bibitem{Ivr19}
%V.~Ivrii,
%{\it Microlocal analysis, sharp spectral asymptotics and applications II},
%Springer International Publishing, 2019.

%\bibitem{Kirby}
%R.~C.~Kirby,
%{\it The topology of 4-manifolds}.
%Lecture Notes in Mathematics, 1374, Springer-Verlag, Berlin, 1989.
%

\bibitem{leonid}
J.~Lagac\'e, S.~Morozov, L.~Parnovski, B.~Pfirsch and R.~Shterenberg,
The almost periodic gauge transform --- An abstract scheme with applications to Dirac Operators.
Preprint arXiv:2106.01888 (2021).

\bibitem{MR0106584}
L.~D.~Landau and E.~M.~Lifshitz,
{\it Theory of elasticity, course of theoretical physics vol~7}, 3rd edn
(Pergamon, Oxford, 1986).
Translated from the Russian by J.~B.~Sykes and W.~H.~Reid.

%\bibitem{LSV}
%A.~Laptev, Yu.~Safarov and D.~Vassiliev,
%On global representation of Lagrangian distributions and solutions of hyperbolic equations,
%{\it Comm.~Pure~Appl. Math.} \textbf{47}  no.~11 (1994) 1411--1456.
%
%\bibitem{Lawson}
%H.~B.~Lawson and M.-L.~Michelsohn, 
%{\it Spin Geometry}, 
%Princeton University Press, Princeton (1989).
%
%\bibitem{levitan}
%B.~M.~Levitan,
%On the asymptotic behaviour of the spectral function of a self-adjoint differential second order equation,
%{\em Izv. Akad. Nauk SSSR Ser. Mat.} \textbf{19} (1952) 325--352.
%
%\bibitem{strohmaier}
%L.~Li and A.~Strohmaier,
%The local counting function of operators of Dirac and Laplace type,
%{\it J.~Geom.~Phys.} \textbf{104} (2016) 204--228.

\bibitem{LF91}
R.~G.~Littlejohn and W.~G.~Flynn,  
Geometric phases in the asymptotic theory of coupled wave equations,
{\it Phys.  Rev.  A} {\bf 44} (1991) 5239--5256.

\bibitem{LW93}
R.~G.~Littlejohn and S.~Weigert,
Diagonalization of multicomponent wave equations with a Born-Oppenheimer example, 
{\it Phys. Rev.  A} {\bf 47} (1993) 3506--3512.

%%\bibitem{kratzel}
%%E.~Kr\"atzel,
%%{\it Analytische Funktionen in der Zahlentheorie}.
%%Teubner-Texte zur Mathematik 139, Vieweg+Teubner Verlag, 2000.
%

\bibitem{miyanishi}
Y.~Miyanishi and G.~Rozenblum, Spectral properties of the Neumann--Poincar\'e operator in 3D elasticity, {\it Int. Math. Res. Not.} {\bf 2021} no.~11 (2021) 8715--8740.

%\bibitem{nakahara}
%M.~Nakahara,
%{\it Geometry, Topology and Physics}, 2nd Edition, 
%IOP Publishing (2003).

\bibitem{NS04}
G.~Nenciu and V.~Sordoni,
Semiclassical limit for multistate Klein–Gordon systems: almost invariant subspaces and scattering theory,
{\it J. Math.  Phys.}  {\bf 45} (2004) 3676.

\bibitem{nicoll}
W.~J.~Nicoll, 
{\it Global oscillatory integrals for solutions of hyperbolic systems},
PhD thesis, University of Sussex (1998).

\bibitem{PST03}
G.~Panati,  H.~Spohn and S.~Teufel, 
Space-adiabatic perturbation theory,
{\it Adv.  Theor.  Math.  Phys. } {\bf 7} (2003) 145--204.
%

\bibitem{applications2}
M.~Reiher and A.~Wolf,
Exact decoupling of the Dirac Hamiltonian. I. General theory,
{\it J.~Chem.~Phys.} {\bf 121} no.~5 (2004) 2037--2047.

\bibitem{applications3}
M.~Reiher and A.~Wolf,
Exact decoupling of the Dirac Hamiltonian. II. The generalized
Douglas--Kroll--Hess transformation up to arbitrary order,
{\it J.~Chem.~Phys.} {\bf 121} no.~22 (2004) 10945--10956.

\bibitem{applications4}
M.~Reiher and A.~Wolf,
Exact decoupling of the Dirac Hamiltonian. III. Molecular properties,
{\it J.~Chem.~Phys.} {\bf 124} no.~6 (2006) 064102.

\bibitem{applications5}
M.~Reiher and A.~Wolf,
Exact decoupling of the Dirac Hamiltonian. IV. Automated evaluation of molecular properties within the Douglas--Kroll--Hess theory up to arbitrary order,
{\it J.~Chem.~Phys.} {\bf 124} no.~6  (2006) 064103.

%\bibitem{rellich}
%F.~Rellich,
%{\it Perturbation theory of eigenvalue problems},
%Courant Institute of Mathematical Sciences, New York University, 1954.
%
\bibitem{rozenblum}
 G.~Rozenblyum, 
 Spectral asymptotic behavior of elliptic systems,
 {\it J.  Math.  Sci.} {\bf 21} (1983) 837--850.

\bibitem{safarov}
Yu.~Safarov,
{\it Non-classical two-term spectral asymptotics for self-adjoint elliptic operators}.
 DSc~thesis, Leningrad Branch of the Steklov Mathematical Institute of the USSR Academy of Sciences (1989). In Russian.

\bibitem{SaVa}
Yu.~Safarov and D.~Vassiliev,
{\it The asymptotic distribution of eigenvalues of partial differential operators},
Amer.~Math.~Soc., Providence (RI), 1997.

%\bibitem{sasaki}
%S.~Sasaki, 
%On the differential geometry of tangent bundles of Riemannian manifolds, 
%{\it Tohoku Math.~J.}~\textbf{10} no.~3 (1958) 338--354.

%\bibitem{sasaki2}
%S.~Sasaki, 
%On the differential geometry of tangent bundles of Riemannian manifolds~II, 
%{\it Tohoku Math.~J.}~\textbf{14} no.~2 (1962) 146--155.  


%\bibitem{sternin}
%A.~Savin and B.~Sternin,
%Pseudodifferential Subspaces and Their Applications in Elliptic Theory,
%in: {\it $C^*$-algebras and Elliptic Theory}, B.~Bojarski, A.~S.~Mishchenko, E.~V.~Troitsky and A.~Weber (eds), Trends in Mathematics, Birkh\"auser Basel (2006) 247--289. 

%\bibitem{schmudgen}
%K.~Schm\"udgen,
%{\it Unbounded self-adjoint operators on Hilbert space},
%Graduate Texts in Mathematics {\bf 265}, Springer Netherlands, 2012.

%\bibitem{seeley}
%R.~T.~Seeley, 
%Complex powers of an elliptic operator, 
%In: {\it Proc. Symp. Pure Math.} \textbf{10},
%Amer. Math. Soc., Providence (RI), 1967, 288--307.

%\bibitem{sharafutdinov}
%V.~Sharafutdinov,
%Geometric symbol calculus for pseudodifferential operators: I,
%{\it Siberian Adv.~Math.} \textbf{15} no.~3 (2005) 81--125. 

\bibitem{shen_wrochna}
D.~Shen and M.~Wrochna,
An index theorem on asymptotically static spacetimes with compact Cauchy surface.
Preprint arXiv:2104.02816 (2021).

%\bibitem{shubin}
%M.~A.~Shubin,
%{\it Pseudodifferential operators and spectral theory},
%Springer, 2001.

\bibitem{siedentop}
H.~Siedentop and E.~Stockmeyer, 
The Douglas--Kroll--He{\ss} method: convergence and block-diagonalization of Dirac operators,
{\it Ann. Henri Poincar\'{e}} {\bf 7} no.~1 (2006) 45--58.

%\bibitem{Stiefel}
%E.~Stiefel,
%Richtungsfelder und Fernparallelismus in $n$-dimensionalen
%Mannig\-faltigkeiten.
%{\it Comment.~Math.~Helv.} \textbf{8} (1935--1936)
%305--353.

%\bibitem{Sulanake} 
%S.~Sulanke, 
%{\it Berechnung des Spektrums des Quadrates des Dirac-Operators auf der Sph\"are und Untersuchungen zum ersten Eigenwert von D auf 5-dimensionalen R\"aumen konstanter positiver Schnittkr\"ummung}, PhD thesis, Humboldt Universit\"at zu Berlin (1981).

\bibitem{taylor_diag}
M.~E.~Taylor,
Reflection of singularities of solutions of systems of differential equations,
{\it Comm. Pure Appl. Math.} {\bf 28} (1975) 457--478.

%\bibitem{Trautman}
%A.~Trautman, 
%The Dirac operator on hypersurfaces, {\it Acta Phys.~Pol.~B} \textbf{26} (1995) 1283--1310.
%

%\bibitem{dima_old}
%D.~Vassiliev,
%Two-term asymptotics of the spectrum of a boundary value problem in the case of a piecewise smooth boundary, 
%{\it Sov. Math. Dokl.} {\bf 33} n.~1 (1986) 227--230.

\bibitem{wegner}
F.~Wegner,
Flow equations for Hamiltonians,
{\it Ann. Phys. } \textbf{3} (1994) 77--91.

%\bibitem{woj1}
%K.~P.~Wojciechowski,
%A note on the space of pseudodifferential projections with the same principal symbol,
%{\it J. Operator Theory} \textbf{15} no.~2 (1986) 207--216. 
%
%\bibitem{woj2}
%K.~P.~Wojciechowski,
%On the Calder\'on projections and spectral projections of the elliptic operators,
%{\it J. Operator Theory} \textbf{20} no.~1 (1988) 107--115. 

\end{thebibliography}
\end{document}